\renewcommand{\eqref}[1]{\hyperref[#1]{(\ref{#1})}}
\newlist{enumlist}{enumerate}{1}
\setlist[enumlist]{labelindent=0cm,label=\arabic*.,labelwidth=2.5ex,labelsep=0.5ex,leftmargin=3ex,align=left,topsep=0.5ex,itemsep=1ex,parsep=1ex}
\newlist{itemlist}{itemize}{1}
\setlist[itemlist]{labelindent=0cm,label=$\bullet$,labelwidth=2.5ex,labelsep=0.5ex,leftmargin=3ex,align=left,topsep=0.5ex,itemsep=1ex,parsep=1ex}
\numberwithin{equation}{section}
\theoremstyle{definition}\newtheorem{definition}{Definition}[section]
\newtheorem{remark}[definition]{Remark}
\newtheorem{proposition}[definition]{Proposition}
\newtheorem{lemma}[definition]{Lemma}
\newtheorem{theorem}[definition]{Theorem}
\newtheorem{corollary}[definition]{Corollary}
\newtheorem{letterthm}{Theorem}
\newcommand{\C}{\mathbb{C}}
\newcommand{\cC}{\mathcal{C}}
\newcommand{\eps}{\varepsilon}
\newcommand{\al}{\alpha}
\newcommand{\be}{\beta}
\newcommand{\ot}{\otimes}
\newcommand{\recht}{\rightarrow}
\newcommand{\Z}{\mathbb{Z}}
\newcommand{\vphi}{\varphi}
\newcommand{\id}{\mathord{\text{\rm id}}}
\newcommand{\om}{\omega}
\newcommand{\N}{\mathbb{N}}
\newcommand{\ovt}{\mathbin{\overline{\otimes}}}
\newcommand{\cD}{\mathcal{D}}
\newcommand{\si}{\sigma}
\newcommand{\F}{\mathbb{F}}
\newcommand{\cZ}{\mathcal{Z}}
\newcommand{\Ad}{\operatorname{Ad}}
\newcommand{\cG}{\mathcal{G}}
\newcommand{\cF}{\mathcal{F}}
\newcommand{\T}{\mathbb{T}}
\newcommand{\actson}{\curvearrowright}
\newcommand{\cS}{\mathcal{S}}
\newcommand{\cB}{\mathcal{B}}
\newcommand{\cU}{\mathcal{U}}
\newcommand{\cN}{\mathcal{N}}
\newcommand{\cR}{\mathcal{R}}
\newcommand{\dpr}{^{\prime\prime}}
\newcommand{\cV}{\mathcal{V}}
\newcommand{\cE}{\mathcal{E}}
\newcommand{\Aut}{\operatorname{Aut}}
\newcommand{\Inn}{\operatorname{Inn}}
\newcommand{\cP}{\mathcal{P}}
\newcommand{\cT}{\mathcal{T}}
\newcommand{\VC}{\mathcal{VC}}
\newcommand{\cQ}{\mathcal{Q}}
\newcommand{\Q}{\mathbb{Q}}
\newcommand{\VCCartan}{\mathcal{VC}_{\text{\rm Cartan}}}
\newcommand{\cost}{\operatorname{cost}}
\newcommand{\M}{\mathbb{M}}
\begin{document}

\begin{center}
{\boldmath\LARGE\bf Classification of regular subalgebras \vspace{0.5ex}\\ of the hyperfinite II$_1$ factor}

\vspace{1ex}

{\sc by Sorin Popa\footnote{Mathematics Department, UCLA, Los Angeles, CA 90095-1555 (United States), popa@math.ucla.edu\\
Supported in part by NSF Grant DMS-1700344}, Dimitri Shlyakhtenko\footnote{Mathematics Department, UCLA, Los Angeles, CA 90095-1555 (United States), shlyakht@math.ucla.edu\\ Supported in part by NSF Grant DMS-1500035} and Stefaan Vaes\footnote{KU~Leuven, Department of Mathematics, Leuven (Belgium), stefaan.vaes@kuleuven.be \\
    Supported in part by European Research Council Consolidator Grant 614195, and by long term structural funding~-- Methusalem grant of the Flemish Government.\vspace{0.3ex}\\ \mbox{}\hspace{6mm}Part of this research was  completed while the authors were visiting the Institute for Pure and Applied Mathematics (IPAM), which is supported by the National Science Foundation.}}

\vspace{1ex}

{\it Dedicated to Alain Connes.}
\end{center}

\vspace{1ex}

\begin{abstract}\setlength{\parindent}{0pt}\setlength{\parskip}{1ex}\noindent
We prove that the regular von Neumann subalgebras $B$ of the hyperfinite II$_1$ factor $R$ satisfying the condition $B'\cap R=\mathcal Z(B)$ are completely classified
(up to conjugacy by an automorphism of $R$) by the associated discrete measured groupoid $\cG=\cG_{B\subset R}$. We obtain a similar classification result for  triple inclusions $A\subset B \subset R$, where $A$ is a Cartan subalgebra in $R$ and the intermediate von Neumann algebra $B$ is regular in $R$.
A key step in proving these results is to show the vanishing cohomology  for the associated cocycle actions $(\alpha_{B\subset R}, u_{B\subset R})$ of $\cG$ on $B$. We in fact prove two very general vanishing cohomology results for free cocycle actions $(\alpha, u)$ of amenable discrete measured groupoids $\cG$ on {\it arbitrary} tracial von Neumann algebras $B$, resp. Cartan inclusions $A\subset B$. Our work provides a unified approach and generalizations to many known vanishing cohomology and
classification results \cite{CFW81}, \cite{O85}, \cite{Suth-Tak84}, \cite{BG84}, \cite{FSZ88}, \cite{Popa18}, etc.
\end{abstract}

\section{Introduction}

Connes's fundamental theorem (\cite{C75}), establishing  the equivalence between amenability and finite dimensional approximation (AFD) for finite von Neumann algebras, has two remarkable consequences. On the one hand, it gives a complete classification of the von Neumann subalgebras of the hyperfinite II$_1$ factor $R$, showing that they are all AFD of the form $B=\oplus_{n\geq 1} (A_n \otimes \M_{n\times n}(\C)) \oplus (A_0 \ovt R)$, where $A_n$, $0\leq n <\infty$, are separable and abelian, possibly equal to  $0$.
On the other hand, it shows that any crossed product von Neumann algebra $B\rtimes_{\alpha, u} \cG$,
corresponding to a free cocycle action $(\alpha, u)$ of an amenable discrete groupoid $\cG$ on
a diffuse tracial AFD algebra $B$, which is ergodic on the center $\cZ(B)$ of $B$, is isomorphic to $R$.

During the late 1970s and early 1980s, much effort has been put into clarifying whether
such a crossed product decomposition of $R$  is in fact uniquely determined by the nature of $B$ and $\cG$ alone (thus not depending
on $(\alpha, u)$).
Alternatively, the question is whether any two regular copies $B\subset R$ of a specific AFD algebra $B$,
with $B'\cap R = \cZ(B)$ and with the same associated amenable discrete measured groupoid $\cG_{B\subset R}=\cG$, are
conjugate by an automorphism  of $R$. The
assumptions imply that $B$ is necessarily homogeneous of the form $B= L^\infty(X, \mu) \ovt N$, where either $N\simeq \M_{n\times n}(\C)$, for some $n\geq 1$,
or $N\simeq R$.
The normalizer $\cN_R(B):=\{u\in \cU(R)\mid uBu^*=B\}$ defines an amenable discrete measured groupoid $\cG=\cG_{B\subset R}$ together with a free cocycle action $(\alpha, u)$ of $\cG$ on $B$ (see Section 2 for a rigorous definition and detailed discussion of discrete measured groupoids and their free
cocycle actions on tracial von Neumann algebras). The groupoid $\cG$ accounts for an amenable ergodic countable equivalence relation ``along'' the space $\cG^{(0)}=X$ of units of $\cG$, with amenable countable isotropy groups $\Gamma_x$ at each $x\in X$ acting outerly on $B_x\simeq N$. When $B$ is abelian, then $B \simeq L^\infty(X, \mu)$ and one calls it a {\it Cartan subalgebra} of $R$. In this case, $\cG$ is just a countable equivalence relation $\cR$ on $X$, with $\alpha$ intrinsic to $\cR$. If $B$ is a factor, then $B\simeq R$ and
$\cG_{B\subset R}$ coincides with the group $\Gamma=\cN_R(B)/\cU(B)$, which is automatically
countable amenable, with $(\alpha, u)$ the free cocycle action of $\Gamma$ on $B$
implemented by $\cN_R(B)$.

The uniqueness problem  has been solved in two important cases: when $B$ is abelian,
i.e., for Cartan subalgebras of $R$ (\cite{CFW81}); and when $B$ is a factor, i.e., when $B\simeq R$ (\cite{O85}).
Thus, it was shown  in \cite{CFW81} that any two Cartan subalgebras of $R$ are conjugate by an automorphism of $R$. Equivalently, there is a unique
amenable ergodic type II$_1$ equivalence relation, which has vanishing 2-cohomology. This also implies that
for any $n\geq 2$, any two regular subalgebras of type I$_n$ of $R$ are conjugate by an automorphism of $R$.
In turn, the result in \cite{O85} shows hat
any two free cocycle actions of the same countable amenable group $\Gamma$ on $B \simeq R$ are cocycle conjugate. Equivalently, given any countable amenable
group $\Gamma$, there exists a unique (up to conjugacy by an automorphism of $R$) irreducible regular subfactor $B \subset R$
with $\cN_R(B)/\cU(B)\simeq \Gamma$.

An important step
towards solving the remaining case when $B$ is an arbitrary II$_1$ AFD algebra, $B \simeq L^\infty(X, \mu) \ovt R$,
has been achieved in \cite{Suth-Tak84}, where it was shown
that any two free \emph{genuine} actions $\alpha$ of the same amenable discrete measured groupoid $\cG$ on $B$ that are ergodic on
$\cZ(B)=L^\infty(X)=L^\infty(\cG^{(0)})$, are cocycle conjugate. However, this result does not settle
the uniqueness of all {\it cocycle actions} $(\alpha, u)$ of $\cG$ on arbitrary such $B$.

We solve this last step here, by proving that in fact any such cocycle $u$ untwists. When combined with \cite{Suth-Tak84}, this shows the uniqueness
of all cocycle actions of $\cG$ on $B$. Equivalently, all regular  embeddings  $B\subset R$, with $B'\cap R=\cZ(B)$, of a given II$_1$
AFD algebra $B$
that have the same groupoid $\cG_{B\subset R}\simeq \cG$ are conjugate by an automorphism of $R$.

We in fact prove a very general vanishing cohomology result, showing that any free cocycle action $(\alpha, u)$
of any amenable discrete measured groupoid $\cG$ on {\it any} (not necessarily AFD)
tracial von Neumann algebra $(B, \tau)$, untwists. We also prove a relative version of this result, by showing that
if in addition $(\alpha, u)$ is assumed to normalize a Cartan subalgebra $A$ of $B$, on which it acts freely, then
the vanishing cohomology
can be realized within the normalizer of $A$ in $B$, i.e., as a coboundary of a map from $\cG$ into the normalizer of $A$ in $B$.
More precisely, in Theorems \ref{thm.two-cocycle-vanishing} and \ref{thm.two-cocycle-vanishing-Cartan}, we show:

\begin{letterthm}\label{thm-A.two-cocycle-vanishing}
Let $\cG$ be a discrete measured groupoid with $X = \cG^{(0)}$ and $(B_x)_{x \in X}$ a measurable field of II$_1$ factors with separable predual. Assume that $\cG$ is amenable and that $(\al,u)$ is a free cocycle action of $\cG$ on $(B_x)_{x \in X}$.
\begin{enumlist}
\item The cocycle $u$ is a co-boundary: there exists a measurable field of unitaries $\cG \ni g \mapsto w_g \in B_{t(g)}$ such that $u(g,h) = \al_g(w_h^*) \, w_g^* \, w_{gh}$ for all $(g,h) \in \cG^{(2)}$.
\item If $(\al,u)$ globally preserves a field of Cartan subalgebras $(A_x \subset B_x)_{x \in X}$ and induces an outer action on the associated equivalence relations, then $w_g$ can be chosen in $\cN_{B_{t(g)}}(A_{t(g)})$.
\end{enumlist}
\end{letterthm}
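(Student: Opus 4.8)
The plan is to pass to the crossed product and exploit amenability of $\cG$ purely as \emph{relative amenability of that crossed product over $B$}, then run a two–step ``approximate, then exact'' untwisting; the genuinely hard part will be the passage from approximate to exact vanishing, since the target is the non–linear set of unitaries of a factor with no room for a Rokhlin lemma inside $B$.

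\textbf{Step 1 (reformulation).} Form $\cM=(B_x)_x\rtimes_{\al,u}\cG$, containing $B=\int^\oplus B_x$ and a measurable field of partial isometries $\lambda_g$ ($g\in\cG$) with $\lambda_g\lambda_h=u(g,h)\lambda_{gh}$ on $\cG^{(2)}$ and $\lambda_g b\lambda_g^\ast=\al_g(b)$ for $b\in B_{s(g)}$. Finding $w$ with $u(g,h)=\al_g(w_h^\ast)w_g^\ast w_{gh}$ is exactly saying that $g\mapsto w_g^\ast\lambda_g$ is a genuine (untwisted) partial–isometry representation of $\cG$ in $\cM$ normalizing $B$. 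The only use of amenability is the equivalence ``$\cG$ amenable $\iff$ $\cM$ is amenable relative to $B$ $\iff$ there is a net of measurable fields of finite subsets $F_n\subset t^{-1}(\cdot)$ satisfying the groupoid F\o lner condition under left translation''; this is the operator–algebraic reformulation of amenability of $\cG$ (cf.\ Section~2, and \cite{CFW81,FSZ88} in the equivalence–relation case).

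\textbf{Step 2 (approximate untwisting).} Using a F\o lner net $F_n$, average the cocycle. The correct averages are built \emph{inside} $\cM$, from Jones–type projections $e_B$ twisted by the $\lambda_k$, $k\in F_n$: because the directions $\lambda_k$ are $L^2(B)$–orthogonal, the averages assemble into honest unitaries (not merely ball elements) after a polar decomposition that is legitimate precisely because almost–invariance of $F_n$ forces the averaged element $\|\cdot\|_2$–close to a unitary. One obtains measurable fields of unitaries $w^{(n)}_g\in B_{t(g)}$ whose defect $\partial w^{(n)}(g,h):=\al_g\bigl((w^{(n)}_h)^\ast\bigr)\,(w^{(n)}_g)^\ast\,w^{(n)}_{gh}\,u(g,h)^\ast$ tends to $1$ in $\|\cdot\|_2$, uniformly on measurable fields of finite subsets of $\cG^{(2)}$. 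This is the non–commutative, groupoid form of the classical fact that $H^2$ of an amenable group is ``approximately zero'', and freeness of $(\al,u)$ is what keeps the relevant relative commutants small enough for the averages to behave.

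\textbf{Step 3 (approximate $\Rightarrow$ exact) and Step 4 (Cartan refinement).} For Step~3 I would follow and extend the improvement–and–iteration scheme of \cite{Popa18}: prove a quantitative lemma that any $\eps$–approximate coboundary is, after multiplying the $w_g$'s by an explicit correction of size $O(\eps)$ produced by re–running Step~2 on the small near–cocycle defect itself, cohomologous to an $\eps'$–approximate coboundary with $\eps'\le\tfrac12\eps$; starting from a sufficiently good $w^{(n)}$ and iterating, the corrections decay geometrically, the infinite product converges in $\|\cdot\|_2$, stays unitary, and yields a measurable field $g\mapsto w_g\in\cU(B_{t(g)})$ with $u=\partial w$ on the nose (measurable selections are used throughout to preserve measurability). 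This is the main obstacle: one cannot merely extract a limit of the $w^{(n)}$, so the almost–invariance must be spent a second time on the defect and the whole iteration controlled quantitatively. For Step~4 I would rerun Steps~1--3 relative to $A=\int^\oplus A_x$: since each $A_x\subset B_x$ is Cartan and $(\al,u)$ acts outerly on the equivalence relations $\cR_{A_x\subset B_x}$, the crossed product, Jones projections, F\o lner averages, polar parts and geometric corrections can all be confined to the normalizing groupoid of $A$, equivalently to $\lspan\cN_{B_x}(A_x)$, which is $\|\cdot\|_2$–dense in $B_x$ because $A_x$ is Cartan; the new input is an $A$–relative local untwisting statement for a single factor with a Cartan subalgebra, and the outerness hypothesis is exactly what shrinks the pertinent relative commutants enough for the $A$–relative averages and limits to land inside $\cN(A)$ rather than merely inside $B$. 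Specializing $B_x$ abelian recovers \cite{CFW81,FSZ88}, $X$ a point with $B\simeq R$ recovers \cite{O85}, and $u$ scalar recovers \cite{Popa18}, which is the sense in which this is a common generalization.
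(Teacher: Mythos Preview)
Your proposal has a genuine gap at the heart of Step~2, and Step~3 inherits the problem. You propose to ``average the cocycle'' over a groupoid F{\o}lner system to produce approximate trivializing unitaries $w_g^{(n)}$, but no formula is given, and there is no known way to average a $2$-cocycle with values in a unitary group so as to obtain an approximate coboundary. Unlike $1$-cocycles for unitary representations, which take values in a Hilbert space where convex combinations make sense, here the target is $\cU(B_{t(g)})$; a linear average of unitaries lands in the ball, not near a unitary, and there is no reason the polar part should approximately untwist $u$. The phrase ``Jones-type projections twisted by the $\lambda_k$'' does not specify a construction. Even for a single amenable group acting on a II$_1$ factor, the $2$-cocycle vanishing of \cite{Popa18} is not proved by F{\o}lner averaging, so invoking that reference in Step~3 does not supply the missing mechanism. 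Your iteration in Step~3 then presupposes a quantitative improvement lemma (``$\eps$-approximate $\Rightarrow$ $\eps/2$-approximate after an $O(\eps)$ correction'') that you neither state precisely nor prove; the defect $\partial w^{(n)}\cdot u^{-1}$ is not itself a $2$-cocycle, so ``re-running Step~2 on the defect'' is undefined.

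The paper's argument is structurally different and avoids these issues entirely. It does \emph{not} attack the full groupoid directly. Instead, it uses the decomposition of an amenable $\cG$ as a semidirect product of the bundle of isotropy groups $(\Gamma_x)_{x\in X}$ and the associated amenable equivalence relation $\cR$ on $X$. Exact $2$-cocycle vanishing on each $\Gamma_x$ is imported from \cite{Popa18}; exact vanishing along $\cR$ comes from hyperfiniteness (\cite{CFW81}). The remaining problem is to make the fiberwise choices of trivialization cohere $\cR$-equivariantly, and this is where the genuinely new ingredient enters: an approximate vanishing theorem for \emph{$1$-cocycles} (not $2$-cocycles) of amenable groups acting freely on arbitrary II$_1$ factors (Theorem~\ref{thm.amenable-1-cocycle-approx-vanish}), whose proof uses free independence in ultraproducts and is not a F{\o}lner average. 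This feeds into an equivariant-section lemma (Theorem~\ref{thm.equivariant-section}) to produce the global trivialization. Part~2 follows the same template, with the Cartan-preserving versions of these inputs (\cite{FSZ88} for $2$-cocycles, Theorem~\ref{thm.approx-vanish-Cartan} for approximate $1$-cocycle vanishing in $\cN_B(A)$). Your Step~4 again presumes that the direct averaging of Steps~1--3 can simply be confined to $\cN_B(A)$, but since those steps are not established, this does not go through either.
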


We also show in Proposition \ref{prop.crossed-product-equiv-rel}  that if $A\subset B \subset M$ is a triple inclusion of von Neumann algebras, with $M$ a II$_1$ factor and $A$ Cartan in $M$, then $B$ is regular in $M$ iff  $A\subset B$ is regular in $M$ (i.e. the normalizer of $A\subset B$ in $M$, $\cN_M(A\subset B) := \{u\in \cU(M)\mid uAu^*=A, uBu^*=B\}$, generates $M$) and iff the corresponding inclusion of equivalence relations $\cR_{A \subset B} \subset \cR_{A \subset M}$ is strongly normal in the sense of \cite{FSZ88}. When combined with the second part of Theorem \ref{thm-A.two-cocycle-vanishing}, this shows that if in addition $M$ is assumed amenable relative to $B$, then such a triple $A \subset B \subset M$ can be identified with $A\subset B \subset B\rtimes_\alpha \cG$, where $\cG=\cG_{B\subset M}$ and $\alpha$ is a genuine action of $\cG$ on $B$
that leaves $A$ invariant. Also the equivalence relation $\cR_{A \subset M}$ can then be written as a genuine semidirect product of $\cR_{A \subset B}$ and an action of $\cG$.
Initiated in \cite{FSZ88}, the study of normal subequivalence relations
saw a revival of interest in recent years, and this structural result about the co-amenable case may be relevant in this direction.

As we mentioned before, while \cite{CFW81} shows the uniqueness up to conjugacy by automorphisms of $R$ of the regular von Neumann subalgebras $B\subset R$ of type I$_n$ satisfying $B'\cap R=\cZ(B)$, the first part of Theorem \ref{thm-A.two-cocycle-vanishing} combined with \cite{Suth-Tak84} shows the uniqueness up to conjugacy by automorphisms of $R$ of regular von Neumann subalgebras $B\subset R$ of type II$_1$ that satisfy $B'\cap R=\cZ(B)$ and have the same groupoid. Since any groupoid $\cG$ has a ``model action'' on $B$ that normalizes a Cartan subalgebra $A$ of $B$ on which it acts freely, this uniqueness result also implies that any regular subalgebra $B\subset R$ contains a Cartan subalgebra of $R$.  We use the second part of Theorem \ref{thm-A.two-cocycle-vanishing} to also prove that any two Cartan subalgebras $A_1, A_2$ of $R$ that are contained in $B$ are conjugate by an automorphism of $R$ that leaves $B$ globally invariant. Altogether, in Theorem \ref{thm.classification-amenable}, Corollary \ref{cor.always-exists-Cartan} and Theorem \ref{thm.classification-amenable-Cartan}, we obtain:

\begin{letterthm}\label{thm-B.classification}
Let $R$ be the hyperfinite II$_1$ factor.
\begin{enumlist}
\item Two regular von Neumann subalgebras $B \subset R$ satisfying $B' \cap R = \cZ(B)$ are conjugate by an automorphism of $R$ if and only if they are of the same type and have isomorphic associated discrete measured groupoids $\cG_{B \subset R}$.
\item Any such $B$ contains a Cartan subalgebra of $R$ and if $A_1,A_2 \subset B$ are Cartan in $R$, there exists an automorphism $\theta$ of $R$ satisfying $\theta(B) = B$ and $\theta(A_1) = A_2$.
\end{enumlist}
\end{letterthm}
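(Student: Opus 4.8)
The plan is to read off Theorem~B from Theorem~\ref{thm-A.two-cocycle-vanishing}, the Sutherland--Takesaki classification of groupoid actions \cite{Suth-Tak84}, the uniqueness of the amenable ergodic type~II$_1$ equivalence relation \cite{CFW81}, and Connes's structure theory for subalgebras of $R$. Throughout one uses that a regular $B \subset R$ with $B' \cap R = \cZ(B)$ is AFD and homogeneous, $B \cong L^\infty(X) \ovt N$ with $N$ a fixed matrix algebra $\M_n(\C)$ (type~I$_n$) or $N \cong R$ (type~II$_1$), that the normalizer $\cN_R(B)$ produces a free cocycle action $(\al,u)$ of $\cG := \cG_{B\subset R}$ on $B$ which is ergodic on $\cZ(B) = L^\infty(\cG^{(0)})$, and that $R = B \rtimes_{\al,u} \cG$. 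The ``only if'' part of~(1) is immediate: an automorphism $\theta$ of $R$ with $\theta(B_1) = B_2$ is type-preserving and restricts to an isomorphism of normalizers, hence of groupoids.

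For the ``if'' part of~(1), let $B_1, B_2 \subset R$ be of the same type with $\cG_{B_1 \subset R} \cong \cG_{B_2 \subset R} =: \cG$. The groupoid isomorphism identifies the unit spaces, hence $\cZ(B_1) \cong \cZ(B_2)$, and together with the matching type this yields $B_1 \cong B_2 =: B$; transporting everything, the two inclusions become $B \subset B \rtimes_{\al_i, u_i} \cG$ for free cocycle actions $(\al_i, u_i)$ of the one groupoid $\cG$ on $B$. In the type~II$_1$ case, Theorem~\ref{thm-A.two-cocycle-vanishing}(1) shows each $u_i$ is a co-boundary, so $(\al_i, u_i)$ is cocycle conjugate to a genuine free action $\tilde\al^i$ of $\cG$ on $B$ still ergodic on $\cZ(B)$; by \cite{Suth-Tak84} the $\tilde\al^i$ are cocycle conjugate, hence so are $(\al_1, u_1)$ and $(\al_2, u_2)$, and a cocycle conjugacy of cocycle actions is precisely an automorphism of $R = B \rtimes_{\al_i,u_i} \cG$ carrying $B_1$ onto $B_2$. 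In the type~I$_n$ case the same scheme works more easily: since $\M_n(\C)$ has no outer automorphisms, the condition $B_i' \cap R = \cZ(B_i)$ forces trivial isotropy, so $\cG$ is an equivalence relation, the twisting reduces to a $\T$-valued $2$-cocycle on $\cG$, and one invokes the vanishing of its $2$-cohomology and the uniqueness of the amenable ergodic equivalence relation from \cite{CFW81} in place of Theorem~\ref{thm-A.two-cocycle-vanishing}(1) and \cite{Suth-Tak84}.

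For~(2), first fix the data $(\text{type}, \cG)$ of a given regular $B \subset R$ and build a \emph{model}: a free \emph{genuine} action $\al$ of $\cG$ on $B$ that globally preserves some Cartan subalgebra $A_0 \subset B$ and is outer on $\cR_{A_0 \subset B}$ (such model actions are easy to produce). Then $A_0 \subset B \subset R_0 := B \rtimes_\al \cG \cong R$, and $A_0$ is Cartan in $R_0$: it is regular there because both $\cN_B(A_0)$ and the canonical unitaries implementing $\al$ normalize $A_0$, and it is a MASA in $R_0$ precisely because $\al$ acts outerly on the equivalence relation. By part~(1) the inclusion $B \subset R$ is conjugate to $B \subset R_0$, so $B$ contains a Cartan subalgebra of $R$. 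For the last assertion of~(2), let $A_1, A_2 \subset B$ be Cartan in $R$; they are then also Cartan in $B$, and by Proposition~\ref{prop.crossed-product-equiv-rel} combined with Theorem~\ref{thm-A.two-cocycle-vanishing}(2), each triple $A_i \subset B \subset R$ is isomorphic to $A_i \subset B \subset B \rtimes_{\al_i} \cG$ for a genuine free action $\al_i$ of $\cG$ on $B$ leaving $A_i$ invariant. Since $A_1$ and $A_2$ are Cartan subalgebras of the hyperfinite $B$ whose associated equivalence relations are, fibrewise over $\cG^{(0)}$, the unique amenable ergodic type~II$_1$ relation, they are conjugate by an automorphism of $B$ fixing $\cZ(B)$; combining this with a relative (Cartan-preserving) form of the Sutherland--Takesaki classification for the pair $\al_1, \al_2$ — available here by feeding Theorem~\ref{thm-A.two-cocycle-vanishing}(2) into the arguments of \cite{Suth-Tak84} and \cite{FSZ88} — one obtains $\psi \in \Aut(B)$ with $\psi(A_1) = A_2$ realizing an $\cN_B(A_2)$-valued cocycle conjugacy of $\al_1$ with $\al_2$, and $\psi$ then extends to the desired $\theta \in \Aut(R)$ with $\theta(B) = B$ and $\theta(A_1) = A_2$.

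The bookkeeping translating between regular inclusions, crossed products by cocycle actions of $\cG$, and cocycle conjugacy is routine; the real content is supplied by Theorem~\ref{thm-A.two-cocycle-vanishing}. Part~(1) of it is exactly what collapses the classification to the genuine-action case governed by \cite{Suth-Tak84}, and part~(2) is what makes every conjugacy respect a prescribed Cartan subalgebra. I expect the main obstacle to be this last point: producing a \emph{single} automorphism of $R$ that simultaneously preserves $B$ and sends $A_1$ to $A_2$ needs the Cartan-relative form of Sutherland--Takesaki rather than its absolute form, and checking that Theorem~\ref{thm-A.two-cocycle-vanishing}(2) genuinely licenses this relative statement is the delicate step.
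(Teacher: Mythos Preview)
Your outline for part~(1) and for the existence half of part~(2) is exactly the paper's approach: reduce to genuine actions via Theorem~\ref{thm-A.two-cocycle-vanishing}(1), invoke \cite{Suth-Tak84}, and for the existence of a Cartan build a model action preserving one and transport via part~(1). The type~I$_n$ reduction to \cite{CFW81} is also how the paper treats it.

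The gap is precisely where you flag it. For the uniqueness half of part~(2), after using Proposition~\ref{prop.crossed-product-equiv-rel} and Theorem~\ref{thm-A.two-cocycle-vanishing}(2) to reduce to two \emph{genuine} free actions $\al_1,\al_2$ of $\cG$ on $(A_y\subset B_y)_{y\in Y}$, you still need to show these are cocycle conjugate \emph{through $\cN_B(A)$}. This is not obtained by ``feeding Theorem~\ref{thm-A.two-cocycle-vanishing}(2) into the arguments of \cite{Suth-Tak84} and \cite{FSZ88}'': Theorem~\ref{thm-A.two-cocycle-vanishing}(2) is a $2$-cocycle vanishing statement and says nothing about comparing two already-genuine actions, while \cite{Suth-Tak84} works in the factor setting with no Cartan to preserve. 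What the paper actually does is prove a separate Cartan-relative Ocneanu-type theorem for groupoids (Theorem~\ref{thm.ocneanu-eq-rel-groupoid}), and this requires three ingredients you do not mention: (a) the group case, i.e.\ uniqueness up to $\cN_B(A)$-valued cocycle conjugacy of outer actions of an amenable \emph{group} on the hyperfinite Cartan pair, which is \cite{BG84} (reproved here as Theorem~\ref{thm.ocneanu-eq-rel} via Lemma~\ref{lem.golodets}); (b) a new approximate-innerness lemma (Lemma~\ref{lem.approx-inner-cartan}) saying that any $\cN_B(A)$-valued cocycle self-conjugacy $(\theta,c)$ of such an action is a limit of inner ones $(\Ad w_n, w_n\al_g(w_n^*))$ with $w_n\in\cN_B(A)$ --- this is the ``approximate uniqueness'' input needed to pass from groups to groupoids, and it is genuinely different from both Theorem~\ref{thm-A.two-cocycle-vanishing}(2) and Theorem~\ref{thm.approx-vanish-Cartan}; and (c) the equivariant-section machinery of Theorem~\ref{thm.equivariant-section}, applied to the field of Polish spaces of cocycle conjugacies, to glue the group-level statements along the equivalence relation of $\cG$. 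Without (b) you cannot make the choices cohere across the isotropy groups, and without (c) you cannot propagate them over the groupoid; neither is supplied by \cite{Suth-Tak84} or \cite{FSZ88}.
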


Our proofs of Theorems \ref{thm-A.two-cocycle-vanishing} and \ref{thm-B.classification} make use of the known $2$-cocycle vanishing theorems for amenable groups on factors, resp.\ equivalence relations (see \cite{O85,Popa18,FSZ88}), as well as the uniqueness, up to cocycle conjugacy, of free actions of amenable groups on the hyperfinite II$_1$ factor, resp.\ equivalence relation (see \cite{O85,BG84}). We apply these results to the isotropy groups $\Gamma_x = \{g \in \cG \mid s(g) = t(g) = x\}$ of an amenable groupoid $\cG$. In order to extend to the entire groupoid $\cG$, we have to make equivariant choices of $2$-cocycle vanishing, resp.\ cocycle conjugacy, for the $\Gamma_x$, where the equivariance is w.r.t.\ to the isomorphisms $\Gamma_x \recht \Gamma_y$ given by conjugation with an element $g \in \cG$ with $s(g) = x$ and $t(g) = y$.

The proof of this later part depends on two key points. The first one is the technical Theorem \ref{thm.equivariant-section}, saying that such an equivariant choice exists, provided that the $2$-cocycle vanishing, resp.\ cocycle conjugacy for $\Gamma_x$, can be done in an ``approximately unique way''.

More precisely, assume that $(\al,u)$ is an outer cocycle action of the countable amenable group $\Gamma$ on a II$_1$ factor $B$. By \cite{Popa18}, we know that $u$ is a co-boundary: we can choose unitary elements $(w_g)_{g \in \Gamma}$ in $B$ such that $u(g,h) = \al_g(w_h^*) \, w_g^* \, w_{gh}$. Then, $\be_g = \Ad w_g \circ \al_g$ defines an outer action of $\Gamma$ on $B$. Any other choice of $w$ such that $u = \partial w$ is of the form $v_g w_g$, where $(v_g)_{g \in \Gamma}$ is a $1$-cocycle for the action $\be$, meaning that $v_{gh} = v_g \, \be_g(v_h)$. By the above ``approximate uniqueness'' of $w$, we mean that every such $1$-cocycle $v$ is approximately a co-boundary.

The second key point is then to prove such approximate vanishing of $1$-cocycles for arbitrary amenable groups (see Theorem \ref{thm.amenable-1-cocycle-approx-vanish}). We state this result below because of its independent
interest.

\begin{letterthm}\label{thm-C.approximate-vanishing}
Let $\Gamma$ be a countable group. The following properties are equivalent.
\begin{itemlist}[labelwidth=6ex,labelsep=0.5ex,leftmargin=6.5ex]
\item[(i)] $\Gamma$ is amenable.

\item[(ii)] No free trace preserving action $\Gamma \actson^\si (B,\tau)$ on a tracial diffuse von Neumann algebra is strongly ergodic.

\item[(iii)] For every free action $\Gamma \actson^\si N$ on a II$_1$ factor $N$, the fixed point algebra of the ultrapower action $\sigma^\om$ on $N^\om$ is an irreducible subfactor of $N^\om$.

\item[(iv)] Given any free action $\Gamma \actson^\si N$ on a II$_1$ factor $N$, any $1$-cocycle $w=(w_g)_{g \in \Gamma}$ for $\sigma$ is approximately a co-boundary:
for every finite subset $F\subset \Gamma$ and every $\eps >0$, there exists $u\in \cU(N)$ such that $\|u\sigma_g(u^*)-w_g\|_2\leq \varepsilon$ for all $g\in F$.
Equivalently, there exists a unitary $U\in N^\omega$ such that $U\sigma^\omega_g(U^*)=w_g$, $\forall g\in \Gamma$, i.e., $w$ is a co-boundary as a
$1$-cocycle for $\sigma^\omega$.

\end{itemlist}
\end{letterthm}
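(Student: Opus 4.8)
The plan is to prove the cyclic chain of implications $(i)\Rightarrow(iv)\Rightarrow(iii)\Rightarrow(ii)\Rightarrow(i)$; essentially all of the difficulty lies in $(i)\Rightarrow(iv)$, which is also the statement used elsewhere in the paper. I would handle $(ii)\Rightarrow(i)$ by contraposition: when $\Gamma$ is non-amenable, its Bernoulli action $\Gamma\actson^\si(B_0,\tau_0)^{\ot\Gamma}$ with $B_0$ diffuse is free and strongly ergodic, because the Koopman representation of $\Gamma$ on $L^2\ominus\C1$ is a direct sum of tensor powers of the left regular representation $\lambda_\Gamma$, hence weakly contained in $\lambda_\Gamma$, and non-amenability says exactly that $\lambda_\Gamma$ has no almost invariant unit vectors, which rules out non-trivial asymptotically invariant sequences.

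The two remaining soft implications go through the ultrapower. For $(iii)\Rightarrow(ii)$: given a free action $\Gamma\actson^\si N$ on a II$_1$ factor, part $(iii)$ makes $(N^\om)^{\si^\om}$ an irreducible subfactor of $N^\om$, and since $N^\om\ne\C1$ this already forces $(N^\om)^{\si^\om}\ne\C1$, i.e.\ $\si$ is not strongly ergodic; a general diffuse $(B,\tau)$ with a free $\Gamma$-action is reduced to the factor case routinely (amplify, or take a free product with $R$, to kill the centre, then transport a non-trivial almost invariant sequence back down through a trace-preserving conditional expectation). For $(iv)\Rightarrow(iii)$: a diagonal-sequence argument shows $(iv)$ persists with $N^\om$ in place of $N$, so every $1$-cocycle $\Gamma\to\cU(N^\om)$ for $\si^\om$ is a genuine $\si^\om$-coboundary; one then checks $P'\cap N^\om=\C1$ for $P=(N^\om)^{\si^\om}$ (which also yields that $P$ is a factor) by noting that a unitary $v\in P'\cap N^\om$ gives the $\si^\om$-cocycle $g\mapsto v^*\si^\om_g(v)$, writing it as $u\,\si^\om_g(u^*)$, and combining the resulting relation $vu\in P$ with $v\in P'$ and the freeness of $\si$ to force $v$ scalar.

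The heart of the matter is $(i)\Rightarrow(iv)$. Let $\Gamma$ be amenable with a Følner sequence, $\Gamma\actson^\si N$ a free action on a II$_1$ factor, and $w$ a $1$-cocycle; one must produce, for each finite $F\subset\Gamma$ and $\eps>0$, a unitary $u\in N$ with $\|u\,\si_g(u^*)-w_g\|_2<\eps$ for all $g\in F$ (equivalently, a genuine untwisting unitary in $N^\om$). The naive Følner average $\tfrac1{|F_n|}\sum_{g\in F_n}w_g$ is useless — it typically tends to $0$ in $\|\cdot\|_2$ — and since $N$ is an arbitrary II$_1$ factor, possibly without property Gamma, there is no Rokhlin tower available inside $N'\cap N^\om$. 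The strategy I would follow is the Rokhlin-lemma strategy of Ocneanu's hyperfinite proof, but exploiting that only \emph{approximate} untwisting is needed: (a) a Rokhlin-type lemma producing, inside $N$ itself, approximate partitions of unity $(f_g)_{g\in T}$ indexed by a large ``$\eps$-tile'' $T\subset\Gamma$ and almost permuted by $\si$ according to left translation — such approximate Rokhlin systems exist in \emph{any} II$_1$ factor by a maximality/exhaustion argument using the trace, with freeness of $\si$ playing the role that central freeness plays in the hyperfinite case (automatic here when $N$ has no property Gamma, and to be arranged otherwise); (b) a Følner $\eps$-paving of $\Gamma$ by translates of such tiles, along which one assembles $u$: the tile systems give an approximate partition of unity of $N$, and $u$ is set equal, on each member of this partition, to the product of the corresponding $f_g$ with a cocycle factor determined by the position of $g$ inside its tile, chosen so that $u\,\si_g(u^*)$ reproduces $w_g$ up to an error supported near the tile boundaries, which is small by the Følner condition; (c) pass to $N^\om$ (or let the approximation parameters go to $0$) to obtain exact vanishing there.

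The main obstacle I expect is step (b): arranging the cocycle along a Følner $\eps$-paving so that the locally defined pieces of $u$ glue together coherently — this is exactly the delicate combinatorial core of Ocneanu's argument (the $\eps$-paving of amenable groups and the propagation of $1$-cohomology vanishing across tiles), here to be carried out over an arbitrary tracial factor instead of over $R$; what makes it feasible is that one never needs exact invariance of the partitions, only $\|\cdot\|_2$-approximate invariance, so the approximate Rokhlin systems of step (a) suffice and no appeal to property Gamma is made. A secondary point to watch is the $(iv)\Rightarrow(iii)$ reduction: one must exclude a non-scalar unitary in $P'\cap N^\om$, and there freeness of $\si$ enters through the triviality of $N'\cap(N\rtimes_\si\Gamma)$ and its ultrapower analogue.
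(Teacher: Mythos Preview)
Your cycle $(i)\Rightarrow(iv)\Rightarrow(iii)\Rightarrow(ii)\Rightarrow(i)$ is not the route the paper takes, and two of your links have genuine problems.

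\textbf{The implication $(iv)\Rightarrow(iii)$ is circular as you wrote it.} Given a unitary $v\in P'\cap N^\om$ with $P=(N^\om)^{\si^\om}$, the map $g\mapsto v^*\si_g^\om(v)$ is \emph{already} a coboundary (namely of $v^*$), so invoking $(iv)$ to write it as $u\,\si_g^\om(u^*)$ yields nothing new: one may simply take $u=v^*$, whence $vu=1\in P$ trivially, and no constraint on $v$ follows. Property $(iv)$ asserts that every $1$-cocycle is a coboundary, but gives no uniqueness statement and hence no direct grip on $P'\cap N^\om$. The paper in fact never proves $(iv)\Rightarrow(iii)$ directly; it establishes $(i)\Rightarrow(iii)$ by an independent argument and then deduces $(iii)\Rightarrow(iv)$ via Connes' $2\times 2$ matrix trick (embed the cocycle as $e_{11}+w_g e_{22}$ in $M_2(N)$, use irreducibility of the fixed-point algebra of the perturbed action to find a partial isometry implementing equivalence of $e_{11}$ and $e_{22}$, and read off the untwisting unitary).

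\textbf{The core implication $(i)\Rightarrow(iv)$ via Rokhlin towers is where your plan diverges most from the paper, and the sketch does not close.} You are right that Ocneanu's argument needs towers in $N'\cap N^\om$, which collapse to $\C$ when $N$ lacks property Gamma. Your proposed substitute --- approximate partitions $(f_g)_{g\in T}$ inside $N$ itself, almost permuted by $\si$, obtained by ``maximality/exhaustion'' --- is exactly the hard point, and freeness of $\si$ alone does not supply it: one needs the $f_g$ not only almost permuted but also sufficiently ``spread out'' in $N$ for the assembled unitary to have the right commutation behaviour, and there is no mechanism in your outline producing this. The paper circumvents the issue entirely. It proves $(i)\Rightarrow(iii)$ by the following device: using Popa's free-independence results in ultrapowers, one finds a Haar unitary $u\in N^\om$ free from $N\rtimes_\si\Gamma$, so that the algebras $\si_g^\om(uNu^*)$, $g\in\Gamma$, together with $N$ are mutually free; the restriction of $\si^\om$ to $\bigvee_g\si_g^\om(uNu^*)$ is then a free Bernoulli $\Gamma$-action, and F{\o}lner-averaging the semicircular generators produces genuinely $\si^\om$-invariant semicirculars in $N^\om$ that are free from $N$. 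This yields, for any $x\in N^\om$, a finite-dimensional abelian $A_0\subset(N^\om)^{\si^\om}$ with $E_{A_0'\cap N^\om}(x)=\tau(x)1$, hence irreducibility of the fixed-point algebra. The free-probability embedding is what replaces your missing Rokhlin lemma; the paper even remarks that a direct ``local Rokhlin tower'' argument exists for the non-strong-ergodicity statement $(ii)$, but it does not attempt to push such an argument through to the cocycle vanishing $(iv)$.

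Your treatment of $(ii)\Rightarrow(i)$ via Bernoulli actions is fine and matches the paper. For $(iii)\Rightarrow(ii)$, the reduction of a general diffuse $(B,\tau)$ to the factor case by ``free product with $R$'' is shakier than you suggest (one must preserve freeness of the action and control the conditional expectation); the paper instead splits over the center of $B$, invoking Schmidt's theorem on the diffuse-center part and the factor statement on the atomic-center part.
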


The proof of the above result is quite subtle, with the statement itself being somewhat surprising, given its generality. Note in this respect that $1$-cocycles for a unitary representation of an amenable group need not be approximately co-boundary (see \cite{Shalom03}). We similarly prove an approximate vanishing of $1$-cocycles with values in the normalizer of a Cartan subalgebra, which we need in the proof of the second part of Theorem \ref{thm-A.two-cocycle-vanishing} (see Theorem \ref{thm.approx-vanish-Cartan}).

In \cite{Popa18}, the class of countable groups satisfying $2$-cohomology vanishing for cocycle actions on II$_1$ factors is denoted as $\VC$. In \cite[Remarks 4.5]{Popa18}, it is speculated that $\VC$ might coincide with the class of treeable groups. We elaborate on this in Section \ref{sec.tree-cost} and this leads us to a new notion of treeability for countable pmp equivalence relations, which we call weak treeability. It is not known whether every treeable group is strongly treeable. We prove that this question is closely related to the question whether every weakly treeable equivalence relation is treeable. Similarly, we introduce a fixed price problem for equivalence relations and relate it to the fixed price problem for groups.

In the final Section \ref{sec.examples-groupoids}, we construct numerous families of amenable discrete measured groupoids. We show that even though there is a unique ergodic amenable equivalence relation of type II$_1$, the classification problem of amenable discrete measured groupoids is strictly harder than the classification of amenable groups. Even restricting to groupoids $\cG$ for which a.e.\ isotropy group is $\Z^2$, one may ``encode'' ergodic transformations of the interval $[0,1]$ up to conjugacy into such amenable groupoids up to isomorphism (see Corollary \ref{cor.classif-vs-non-classif}).

\section{Discrete measured groupoids, cocycle actions and crossed products}\label{sec.groupoids}

In this section, we summarize some of the basic terminology and well known results on discrete measured groupoids and their actions. Recall that a \emph{discrete measured groupoid} $\cG$ is a groupoid with the following extra structure. This concept goes back to \cite{mackey63}.

\begin{itemlist}
\item $\cG$ is a standard Borel space and the units $\cG^{(0)} \subset \cG$ form a Borel subset.
\item The source and target maps $s,t : \cG \recht \cG^{(0)}$ are Borel and countable-to-one.
\item Defining $\cG^{(2)} = \{(g,h) \in \cG \times \cG \mid s(g) = t(h)\}$, the multiplication map $\cG^{(2)} \recht \cG : (g,h) \mapsto gh$ is Borel. The inverse map $\cG \recht \cG : g \mapsto g^{-1}$ is Borel.
\item $\cG^{(0)}$ is equipped with a probability measure $\mu$ such that, denoting by $\mu_s$ and $\mu_t$ the $\si$-finite measures on $\cG$ given by integrating the counting measure over $s,t : \cG \recht \cG^{(0)}$, we have that $\mu_s \sim \mu_t$.
\end{itemlist}

We say that $\cG$ is probability measure preserving (pmp) if $\mu_s = \mu_t$. To every discrete measured groupoid $\cG$ is associated the countable nonsingular equivalence relation $\cR$ on $(\cG^{(0)},\mu)$ given by
$$\cR = \{(t(g),s(g)) \mid g \in \cG \} \; .$$
Note that $\cG$ is pmp if and only if $\cR$ is a pmp equivalence relation. We say that $\cG$ is ergodic if the associated equivalence relation $\cR$ is ergodic.

\begin{remark}
In this paper, we only work in the measurable context, discarding sets of measure zero whenever useful. By von Neumann's measurable selection theorem (see e.g.\ \cite[Theorem 18.1]{kechris-book}), we therefore never have problems choosing measurable sections. Also, all isomorphisms between measure spaces, fields of von Neumann algebras, measured groupoids, are defined up to sets of measure zero. Whenever we write \emph{for all}, this should be interpreted as \emph{for almost every}, whenever appropriate.

Using \cite[Sections 1 and 2]{Sutherland85}, we may consider measurable fields of any kind of `separable structures': von Neumann algebras with separable predual, standard probability spaces, countable groups, Polish spaces, Polish groups, etc. In particular, by \cite[Theorem 2.5]{Sutherland85}, all natural definitions of such measurable fields are equivalent.
\end{remark}

Fix a discrete measured groupoid $\cG$ and write $X = \cG^{(0)}$. Define $\cG^{(2)}$ as above and similarly define $\cG^{(3)}$.

\begin{definition}\label{def.cocycle-action}
A \emph{cocycle action} $(\al,u)$ of $\cG$ on a measurable field $(B_x)_{x \in X}$ of von Neumann algebras with separable predual is given by
\begin{itemlist}
\item a measurable field of $*$-isomorphisms $\cG \ni g \mapsto \al_g : B_{s(g)} \recht B_{t(g)}$,
\item a measurable field of unitaries $\cG^{(2)} \ni (g,h) \mapsto u(g,h) \in \cU(B_{t(g)})$,
\end{itemlist}
satisfying
\begin{align*}
\al_g \circ \al_h = \Ad(u(g,h)) \circ \al_{gh} &\quad\text{for all $(g,h) \in \cG^{(2)}$,}\\
\al_g(u(h,k)) \, u(g,hk)) = u(g,h) \, u(gh,k) &\quad\text{for all $(g,h,k) \in \cG^{(3)}$,}\\
\al_g = \id  &\quad\text{when $g \in \cG^{(0)}$,}\\
u(g,h) = 1 &\quad\text{when $g \in \cG^{(0)}$ or $h \in \cG^{(0)}$.}
\end{align*}
\end{definition}

An automorphism $\al$ of a von Neumann algebra $B$ is said to be free (or properly outer) if the only element $v \in B$ satisfying $v x = \al(x) v$ for all $x \in B$ is the zero element $v=0$. When $B$ is a factor, an automorphism $\al$ is free if and only if it is outer, meaning that there is no unitary element $v \in \cU(B)$ such that $\al = \Ad v$.

Denote by $\Gamma_x = \{g \in \cG \mid s(g) = t(g) = x\}$ the isotropy groups of $\cG$. The cocycle action $(\al,u)$ is said to be free if for almost every $x \in X$ and all $g \in \Gamma_x$ with $g \neq e$, the $*$-automorphism $\al_g : B_x \recht B_x$ is free.

The cocycle actions $(\al,u)$ of $\cG$ on $(B_x)_{x \in X}$ and $(\be,\Psi)$ of $\cG$ on $(D_x)_{x \in X}$ are said to be \emph{cocycle conjugate} if there exists a measurable field of $*$-isomorphisms $X \ni x \mapsto \theta_x : B_x \recht D_x$ and a measurable field of unitaries $\cG \ni g \mapsto w_g \in \cU(D_{t(g)})$ satisfying
\begin{align*}
\theta_{t(g)} \circ \al_g \circ \theta_{s(g)}^{-1} = \Ad w_g \circ \be_g &\quad\text{for all $g \in \cG$,}\\
\theta_{t(g)}(u(g,h)) = w_g \, \be_g(w_h) \, \Psi(g,h) \, w_{gh}^* &\quad\text{for all $(g,h) \in \cG^{(1)}$.}
\end{align*}

The \emph{cocycle crossed product construction} associates to every cocycle action $(\al,u)$ a regular inclusion of von Neumann algebras $B \subset M$ together with a faithful normal conditional expectation $E : M \recht B$. The cocycle crossed product $M$ can be easily defined by first considering the \emph{full pseudogroup} $[[\cG]]$ of $\cG$ consisting of all Borel sets $\cU \subset \cG$ with the property that the restrictions to $\cU$ of the source map $s$ and the target map $t$ are injective, and identifying $\cU,\cU'$ if they differ by a set of measure zero. The composition of $\cU,\cV \in [[\cG]]$ is defined as
$$\cU \cdot \cV = \{g h \mid g \in \cU, h \in \cV , s(g) = t(h) \} \; .$$
For every $\cU \in [[\cG]]$, define the Borel map $\vphi_\cU : s(\cU) \recht t(\cU) : \vphi_\cU(s(g)) = t(g)$ for all $g \in \cU$.

Writing
$$B = \int^{\oplus}_{X} B_x \; d\mu(x) \; ,$$
the cocycle action $(\al,u)$ of $\cG$ on $(B_x)_{x \in X}$ can be reinterpreted as follows in terms of $[[\cG]]$. To every $\cU \in [[\cG]]$, there corresponds a $*$-isomorphism $\al_\cU : B 1_{s(\cU)} \recht B 1_{t(\cU)}$ given by
$$(\al_\cU(b))(t(g)) = \al_g(b(s(g))) \quad\text{for all $b \in B 1_{s(\cU)}$, $g \in \cU$.}$$
To every $\cU,\cV \in [[\cG]]$, there corresponds a unitary element $u(\cU,\cV) \in B 1_{t(\cU \cdot \cV)}$ given by
$$u(\cU,\cV)_{t(gh)} = u(g,h) \quad\text{for all $g \in \cU$ and $h \in \cV$ with $s(g) = t(h)$.}$$
Then, the cocycle crossed product $M$ is generated by $B$ and partial isometries $u(\cU)$ for all $\cU \in [[\cG]]$ satisfying the following properties.
\begin{align*}
& u(\cU)^* u(\cU) = 1_{s(\cU)} \quad\text{and}\quad u(\cU) u(\cU)^* = 1_{t(\cU)} \; ,\\
& u(\cU) \, u(\cV) = u(\cU,\cV) \, u(\cU \cdot \cV) \; ,\\
& u(\cU) b u(\cU)^* = \al_\cU(b) \quad\text{for all $b \in B 1_{s(\cU)}$,}\\
& E(u(\cU)) = 1_{\cU \cap \cG^{(0)}} \; .
\end{align*}
Note that $B' \cap M = L^\infty(X)$ if and only if almost every $B_x$ is a factor and the cocycle action is free.

Conversely, whenever $M$ is a von Neumann algebra with separable predual and $B \subset M$ is a regular von Neumann subalgebra with a faithful normal conditional expectation $E : M \recht B$ and $B' \cap M = \cZ(B)$, there is a canonical discrete measured groupoid $\cG = \cG_{B \subset M}$ and a cocycle action of $\cG$ on the field of factors given by the central decomposition of $B$, such that $B \subset M$ is isomorphic with the cocycle crossed product inclusion. To see this, define $\cP$ as the set of partial isometries $v \in M$ such that the projections $p = v^* v$ and $q = v v^*$ belong to $\cZ(B)$ and $v B v^* = B q$. Define the equivalence relation $\sim$ on $\cP$ by $v \sim w$ if and only if $v \in B w$. Using the usual correspondence between groupoids and inverse semigroups, one identifies $\cP / \mathord{\sim}$ with the full pseudogroup $[[\cG]]$ of an essentially unique discrete measured groupoid $\cG$ with space of units $\cG^{(0)}$ satisfying $L^\infty(\cG^{(0)}) = \cZ(B)$.

Writing $X =\cG^{(0)}$ and writing $B$ as the direct integral of factors $(B_x)_{x \in X}$, the above relation between $\cP$ and the full pseudogroup $[[\cG]]$ provides a cocycle action of $\cG$ on $(B_x)_{x \in X}$. By construction, $B \subset M$ is isomorphic with the cocycle crossed product inclusion.

Two such cocycle crossed product inclusions are isomorphic if and only if the corresponding measured groupoids are isomorphic and their cocycle actions are cocycle conjugate through this isomorphism of groupoids. This bijective correspondence between regular inclusions $B \subset M$ and cocycle crossed products can be viewed in different ways. In \cite{DFP18}, this is interpreted in the language of inverse semigroups, where cocycle actions of groupoids become extensions of inverse semigroups.

Note that the cocycle crossed product of a free cocycle action of $\cG$ on a field $(B_x)_{x \in X}$ of factors admits a faithful normal tracial state if and only if each $B_x$ admits such a faithful normal tracial state and $X = \cG^{(0)}$ admits an invariant probability measure that is equivalent with $\mu$. So, when $(M,\tau)$ is a tracial von Neumann algebra and $B \subset M$ is a regular von Neumann subalgebra satisfying $B' \cap M = \cZ(B)$, we can decompose $M$ as the cocycle crossed product of a cocycle action of a pmp discrete measured groupoid $\cG$ on a field of tracial factors. We get that $M$ is a factor if and only if $\cG$ is ergodic.

Recall from \cite[Section 3.2]{AR00} the notion of amenability for discrete measured groupoids. Also recall from \cite[Section 5.3]{AR00} that $\cG$ is amenable if and only if the countable equivalence relation $\cR$ is amenable and almost all isotropy groups $\Gamma_x$ are amenable.

\section{\boldmath Vanishing of $2$-cohomology for amenable groupoids}\label{sec.vanish-2-cohom}

In this section, we prove the first parts of Theorem \ref{thm-A.two-cocycle-vanishing} and \ref{thm-B.classification}. So we first prove the following result.

\begin{theorem}\label{thm.two-cocycle-vanishing}
Let $\cG$ be a discrete measured groupoid with $X = \cG^{(0)}$ and $(B_x)_{x \in X}$ a measurable field of II$_1$ factors with separable predual. Assume that $\cG$ is amenable.

When $(\al,u)$ is a free cocycle action of $\cG$ on $(B_x)_{x \in X}$, the cocycle $u$ is a co-boundary: there exists a measurable field of unitaries $\cG \ni g \mapsto w_g \in B_{t(g)}$ such that
$$u(g,h) = \al_g(w_h^*) \, w_g^* \, w_{gh} \quad\text{for all $(g,h) \in \cG^{(2)}$.}$$
\end{theorem}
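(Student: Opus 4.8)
The plan is to reduce the groupoid statement to the known $2$-cohomology vanishing theorem for amenable \emph{groups} acting on II$_1$ factors (from \cite{Popa18}), applied fiberwise to the isotropy groups $\Gamma_x$, and then to assemble these fiberwise solutions into a single measurable field $g \mapsto w_g$ by exploiting the equivariance machinery announced as Theorem \ref{thm.equivariant-section} together with the approximate $1$-cocycle vanishing of Theorem \ref{thm.amenable-1-cocycle-approx-vanish} (i.e.\ Theorem \ref{thm-C.approximate-vanishing}(iv)). Concretely, I would first choose a measurable field $g \mapsto \si_g$ of measurable sections in the following sense: since $\cR$ is amenable, its full group action can be generated along a treeing-type structure, so after discarding a null set one can write the $2$-cocycle $u$ as a combination of (a) its restrictions to the isotropy groups $\Gamma_x$ and (b) a transversal part coming from a choice of section $s(g) = t(g) = x$. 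The transversal part can be killed by a standard measurable selection argument because the amenable equivalence relation $\cR$ has vanishing $2$-cohomology with values in any measurable field of Polish unitary groups (this is the equivalence-relation analogue and can be cited via \cite{FSZ88}, \cite{Popa18}); what remains is genuinely a family of group $2$-cocycles $u|_{\Gamma_x}$ on the factors $B_x$.

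The next step is the core of the argument: for almost every $x$, the outer cocycle action $(\al|_{\Gamma_x}, u|_{\Gamma_x})$ of the amenable group $\Gamma_x$ on the II$_1$ factor $B_x$ has, by \cite{Popa18}, a cochain $(w_g^{(x)})_{g \in \Gamma_x}$ with $u(g,h) = \al_g((w_h^{(x)})^*)\,(w_g^{(x)})^*\, w_{gh}^{(x)}$. The difficulty is that this cochain is only well-defined up to a $1$-cocycle for the twisted action $\be_g^{(x)} = \Ad w_g^{(x)} \circ \al_g$, and a priori there is no reason that $x \mapsto w_g^{(x)}$ can be chosen measurably, let alone equivariantly with respect to the conjugation isomorphisms $\Gamma_x \to \Gamma_y$ induced by $g \in \cG$ with $s(g)=x$, $t(g)=y$. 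This is exactly where Theorem \ref{thm.equivariant-section} enters: it produces such an equivariant measurable choice \emph{provided} the solution is ``approximately unique'', i.e.\ provided every $1$-cocycle for $\be^{(x)}$ is approximately a coboundary. And that approximate uniqueness is furnished precisely by Theorem \ref{thm.amenable-1-cocycle-approx-vanish}: amenability of $\Gamma_x$ (which is part of the hypothesis that $\cG$ is amenable) guarantees that every $1$-cocycle for any free action of $\Gamma_x$ on a II$_1$ factor is approximately inner, equivalently an honest coboundary in the ultrapower. So I would invoke \ref{thm.equivariant-section} with input data: the measurable field of factors $(B_x)$, the field of amenable groups $(\Gamma_x)$, the ``existence'' statement from \cite{Popa18}, and the ``approximate uniqueness'' from \ref{thm.amenable-1-cocycle-approx-vanish}.

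Finally, I would splice together the transversal part and the isotropy part: having an equivariant measurable field $x \mapsto (w_g^{(x)})_{g \in \Gamma_x}$ that trivializes $u$ along the isotropy, and having already trivialized the transversal cocycle, the two combine (using the cocycle identity in $\cG^{(3)}$ to check compatibility on products $g = g' g''$ with $g''$ isotropic) to give a global measurable field $\cG \ni g \mapsto w_g \in \cU(B_{t(g)})$ with $u(g,h) = \al_g(w_h^*)\, w_g^*\, w_{gh}$ on all of $\cG^{(2)}$. I expect the main obstacle to be not any single inequality but the bookkeeping of the equivariance: verifying that the hypotheses of Theorem \ref{thm.equivariant-section} are met — in particular formulating precisely what ``approximately unique'' means in the fibered/measurable setting and matching it to the statement of Theorem \ref{thm.amenable-1-cocycle-approx-vanish} — and then checking that the equivariant isotropy solution is compatible with the transversal trivialization across all of $\cG$, not just within a single fiber. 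A secondary technical point is ensuring all choices (sections, cochains, null sets) are made measurably, which the blanket measurable-selection remark in Section \ref{sec.groupoids} handles but which must be tracked carefully through the reduction.
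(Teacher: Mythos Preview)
Your proposal is correct and follows essentially the same route as the paper: trivialize the isotropy $2$-cocycles fiberwise via \cite{Popa18}, use amenability of $\cR$ to write $\cG$ as a semidirect product and trivialize the transversal part, then invoke Theorem~\ref{thm.equivariant-section} with the dense-orbit hypothesis supplied by Theorem~\ref{thm.amenable-1-cocycle-approx-vanish} to make the fiberwise choices $\cR$-equivariant. One minor correction: the ``treeing-type structure'' you mention is really hyperfiniteness (via \cite{CFW81}), which is what allows one to lift $\cR$ to a groupoid morphism $q:\cR\to\cG$ and hence realize $\cG$ as a semidirect product; the paper also makes the Polish-space setup for Theorem~\ref{thm.equivariant-section} explicit by taking $P_x$ to be the space of homomorphic lifts $\Gamma_x\to\cN_{D_x}(B_x)$ with the conjugation action of $\cU(B_x)$, which is the precise formulation of your ``approximate uniqueness'' input.
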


By Theorem \ref{thm.two-cocycle-vanishing}, if $(M,\tau)$ is a tracial von Neumann algebra and $B \subset M$ is a regular von Neumann subalgebra satisfying $B' \cap M = \cZ(B)$ such that the associated groupoid is amenable, then the inclusion $B \subset M$ is isomorphic to a true crossed product inclusion, without $2$-cocycle. So in combination with \cite[Theorem 1.2]{Suth-Tak84}, we obtain the following classification of regular subalgebras of amenable tracial von Neumann algebras.

\begin{theorem}\label{thm.classification-amenable}
Let $R$ be the hyperfinite II$_1$ factor. The regular von Neumann subalgebras $B \subset R$ satisfying $B' \cap M = \cZ(B)$ are completely classified by the associated discrete measured groupoid and the type of $B$.

More precisely, given two such inclusions $B_i \subset R$ with associated groupoids $\cG_i$, $i=1,2$, there exists an automorphism $\theta \in \Aut(R)$ satisfying $\theta(B_1) = B_2$ if and only if $B_1$ and $B_2$ have the same type and there exists a measure class preserving isomorphism $\cG_1 \cong \cG_2$.
\end{theorem}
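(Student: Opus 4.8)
The plan is to reduce this classification statement to the combination of Theorem \ref{thm.two-cocycle-vanishing} above with the Sutherland--Takesaki cocycle conjugacy theorem \cite[Theorem 1.2]{Suth-Tak84}. First I would recall, using the material from Section \ref{sec.groupoids}, that a regular von Neumann subalgebra $B \subset R$ with $B' \cap R = \cZ(B)$ is, canonically, a cocycle crossed product inclusion $B \subset B \rtimes_{\al,u} \cG$ for the discrete measured groupoid $\cG = \cG_{B \subset R}$ and a free cocycle action $(\al,u)$ on the field of factors $(B_x)_{x\in X}$ given by the central decomposition of $B$. Since $R$ is the hyperfinite II$_1$ factor, $R$ is amenable, hence $\cG$ is amenable (amenability of the crossed product forces amenability of $\cR_{B\subset R}$ and of the isotropy groups, by \cite[Section 5.3]{AR00}), and each $B_x$ is an amenable, hence hyperfinite, tracial factor; by the condition $B'\cap R=\cZ(B)$ and homogeneity, $B_x$ is either $\M_{n\times n}(\C)$ for a fixed $n$ (the type I$_n$ case) or $B_x \simeq R$ (the type II$_1$ case). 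Also $\cG$ is pmp and ergodic because $R$ is a II$_1$ factor with the tracial state, so $\mu$ is $\cG$-invariant and $\cR$ is ergodic.

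Next I would handle the two implications. For the easy direction, if $\theta \in \Aut(R)$ satisfies $\theta(B_1) = B_2$, then $\theta$ carries normalizers to normalizers and the center to the center, so it induces a measure class preserving isomorphism $\cG_1 \cong \cG_2$, and it obviously preserves the type of $B$. For the substantive direction, suppose $B_1, B_2$ have the same type and there is an isomorphism $\cG_1 \cong \cG_2$; transporting everything through this isomorphism, we may assume $\cG_1 = \cG_2 =: \cG$ and that $B_1, B_2$ have isomorphic fields of factors $(B_x)$ (in the type I$_n$ case they are literally equal to $\M_{n\times n}(\C)$; in the type II$_1$ case each fiber is $R$, and a measurable field of isomorphisms exists by separability/Sutherland's measurable selection, as noted in the Remark). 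So $B_i \subset R$ is the cocycle crossed product of a free cocycle action $(\al^{(i)}, u^{(i)})$ of $\cG$ on $(B_x)$. By Theorem \ref{thm.two-cocycle-vanishing} applied to each, there are measurable fields of unitaries $w^{(i)}_g \in B_{t(g)}$ with $u^{(i)}(g,h) = \al^{(i)}_g((w^{(i)}_h)^*)\,(w^{(i)}_g)^*\,w^{(i)}_{gh}$; setting $\be^{(i)}_g = \Ad(w^{(i)}_g)\circ \al^{(i)}_g$ gives a genuine (untwisted) free action of $\cG$ on $(B_x)$, and $(\al^{(i)},u^{(i)})$ is cocycle conjugate to $(\be^{(i)},1)$ via $(\id, w^{(i)})$. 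Now $\be^{(1)}$ and $\be^{(2)}$ are two free genuine actions of the amenable pmp groupoid $\cG$ on the same field of hyperfinite factors, ergodic on $\cZ(B) = L^\infty(X)$; by \cite[Theorem 1.2]{Suth-Tak84} they are cocycle conjugate. Composing the three cocycle conjugacies $(\al^{(1)},u^{(1)}) \sim (\be^{(1)},1) \sim (\be^{(2)},1) \sim (\al^{(2)},u^{(2)})$ and invoking the correspondence recalled in Section \ref{sec.groupoids} — that isomorphic groupoids with cocycle conjugate cocycle actions give isomorphic crossed product inclusions — produces an isomorphism $B_1 \subset R \;\cong\; B_2 \subset R$, i.e.\ an automorphism $\theta$ of $R$ with $\theta(B_1) = B_2$.

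The only points requiring care, rather than genuine obstacles, are bookkeeping ones: checking that the Sutherland--Takesaki theorem applies in exactly the form needed (genuine actions, ergodic on the center, hyperfinite fibers of the correct type — including the type I$_n$ case, where it recovers the uniqueness of regular type I$_n$ subalgebras and can alternatively be quoted from \cite{CFW81}), and verifying that cocycle conjugacy of the cocycle actions is genuinely equivalent to conjugacy of the inclusions by an automorphism of $R$ (one must use that $R$ is a factor, equivalently that $\cG$ is ergodic, so that the crossed products are all $\simeq R$, and that an isomorphism of inclusions $B_i \subset R$ extends/restricts appropriately). The real content of the theorem is entirely in Theorem \ref{thm.two-cocycle-vanishing}; once the $2$-cocycle is killed, the statement is a direct corollary of \cite{Suth-Tak84}.
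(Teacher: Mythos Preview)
Your proposal is correct and follows exactly the route the paper takes: the paper does not give a separate proof of Theorem~\ref{thm.classification-amenable} but simply states it as the combination of Theorem~\ref{thm.two-cocycle-vanishing} (to untwist the $2$-cocycle) with \cite[Theorem~1.2]{Suth-Tak84} (to obtain cocycle conjugacy of the resulting genuine actions), together with the crossed-product/inclusion dictionary of Section~\ref{sec.groupoids}. Your handling of the type~I$_n$ case via \cite{CFW81} (since the isotropy is then trivial and Theorem~\ref{thm.two-cocycle-vanishing} as stated only covers II$_1$ fibers) is also in line with the paper's discussion in the introduction and Remark~\ref{rem.what-if-non-factorial}.
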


\begin{remark}\label{rem.what-if-non-factorial}
One may formulate Theorem \ref{thm.classification-amenable} without the factoriality assumption on $R$, i.e.\ for arbitrary amenable tracial von Neumann algebras $(M,\tau)$ and their regular von Neumann subalgebras $B \subset M$ satisfying $B' \cap M = \cZ(B)$. In that case, denote for every $n \geq 1$, by $z_n \in \cZ(B)$ the largest central projection such that $B z_n$ is of type I$_n$. Let $z_0 = 1-\sum_{n=1}^\infty z_n$, so that $B z_0$ is of type II$_1$. Since $B \subset M$ is regular, we have that $z_n \in \cZ(M)$ for every $n \geq 0$. The discrete measured groupoid $\cG$ associated with $B \subset M$ can then be viewed as the disjoint union of the groupoids $\cG^{(n)} = \cG_{B z_n \subset M z_n}$.

Note that since nontrivial groups do not have free actions on type I factors, each of the groupoids $\cG^{(n)}$, $n \geq 1$, has trivial isotropy groups, meaning that each $\cG^{(n)}$ is a countable equivalence relation.

The general statement of Theorem \ref{thm.classification-amenable} then goes as follows. Given, for $i=1,2$, amenable tracial von Neumann algebras $(M_i,\tau_i)$ with regular von Neumann subalgebras $B_i \subset M_i$ satisfying $B_i' \cap M_i = \cZ(B_i)$, denote by $\cG_i^{(n)}$ the associated discrete measured groupoids. Then there exists a $*$-isomorphism $\theta : M_1 \recht M_2$ satisfying $\theta(B_1) = B_2$ if and only if for every $n \geq 0$, there is a measure class preserving isomorphism $\cG_1^{(n)} \cong \cG_2^{(n)}$.
\end{remark}

\begin{proof}[Proof of Theorem \ref{thm.two-cocycle-vanishing}]
Denote by $\Gamma_x = \{g \in \cG \mid t(g) = s(g) = x\}$ the field of isotropy groups. Denote by $B \subset M$ the cocycle crossed product and define $D = \cZ(B)' \cap M$. Note that $B \subset D \subset M$ and $\cZ(B) = \cZ(D)$. By construction, we have a measurable field of cocycle actions $\Gamma_x \actson B_x$ and $D$ is the direct integral of the cocycle crossed products $B_x \rtimes \Gamma_x$. By \cite[Theorem 0.1]{Popa18}, every cocycle action of an amenable group on a II$_1$ factor can be perturbed to a genuine action. So, after a perturbation of $(\al,u)$, we may assume that $D_x$ is the crossed product of $B_x$ and a genuine action of $\Gamma_x$, which means that $u(g,h)=1$ whenever $s(g) = t(g)$. We denote by $u(x,g)$, $x \in X$, $g \in \Gamma_x$, the canonical unitary elements realizing this crossed product decomposition $D_x =B_x \rtimes \Gamma_x$.

Denote by $\cR$ the countable, nonsingular equivalence relation on $(X,\mu)$ given by
$$\cR = \{(t(g),s(g)) \mid g \in \cG\} \; .$$
Since $\cG$ is amenable, $\cR$ is an amenable equivalence relation. So, up to measure zero, $X$ can be partitioned into $\cR$-invariant Borel subsets $X = X_0 \sqcup X_1$ such that the restriction of $\cR$ to $X_0$ has finite orbits and thus admits a fundamental domain, while the restriction of $\cR$ to $X_1$ is implemented by a free nonsingular action of the group $\Z$. It follows that the groupoid morphism $\cG \recht \cR : g \mapsto (t(g),s(g))$ admits a measurable lift $q : \cR \recht \cG$ that is itself a morphism, meaning that $q(x,y) q(y,z) = q(x,z)$ for all $(x,y),(y,z) \in \cR$.

We may then view $\cG$ as the semidirect product of the field $(\Gamma_x)_{x \in X}$ of groups and the measurable family of group isomorphisms $\delta_{(x,y)} : \Gamma_y \recht \Gamma_x$ given by $\delta_{(x,y)}(g) = q(x,y) g q(x,y)^{-1}$ for all $(x,y) \in \cR$ and all $g \in \Gamma_y$. Note that $\delta_{(x,y)} \circ \delta_{(y,z)} = \delta_{(x,z)}$.

For the same reason as above, we can perturb $(\al,u)$ so that the composition with $q$ is a genuine action of $\cR$ on the field $(B_x)_{x \in X}$. This means that we have written $M$ as the crossed product of
$$D = \int^{\oplus}_X (B_x \rtimes \Gamma_x) \; d\mu(x)$$
by an action of $\cR$ given by a field of $*$-isomorphisms $\theta_{(x,y)} : (B_y \subset B_y \rtimes \Gamma_y) \recht (B_x \subset B_x \rtimes \Gamma_x)$ of the form
\begin{align*}
\theta_{(x,y)}(b) = \al_{q(x,y)}(b) &\quad\text{for all $(x,y) \in \cR$ and $b \in B_y$,}\\
\theta_{(x,y)}( u(y,g)) \in \cU(B_x) \, u(x,\delta_{(x,y)}(g)) &\quad\text{for all $(x,y) \in \cR$ and $g \in \Gamma_y$,}
\end{align*}
and satisfying $\theta_{(x,y)} \circ \theta_{(y,z)} = \theta_{(x,z)}$.

In order to prove the theorem, we have to show that there exist measurable families of unitaries $v(x,y) \in \cU(B_x)$ (for $(x,y) \in \cR$) and $w(y,g) \in \cU(B_y)$ (for $y \in X$, $g \in \Gamma_y$) such that
\begin{itemlist}
\item $v$ is a $1$-cocycle for the action $\theta$ of $\cR$ on $(B_x)_{x \in X}$, in the sense that $v(x,y) \, \al_{(x,y)}(v(y,z)) = v(x,z)$ for all $(x,y),(y,z) \in \cR$,
\item for all $y \in X$, we have that $(w(y,g))_{g \in \Gamma_y}$ is a $1$-cocycle for the action $\Gamma_y \actson B_y$,
\item we have $(\Ad(v(x,y)) \circ \theta_{(x,y)}) \, \bigl( \, w(y,g) u(y,g) \, \bigr) = w(x,\delta_{(x,y)}(g)) u(x,\delta_{(x,y)}(g))$ for all $(x,y) \in \cR$ and $g \in \Gamma_y$.
\end{itemlist}
Indeed, once this is proved, we can perturb $(\al,u)$ in such a way that $D_x$ is still the crossed product of $B_x$ and a genuine action of $\Gamma_x$ (with implementing unitary elements $w(x,g) u(x,g)$, $g \in \Gamma_x$) and such that $M$ is the crossed product by an action of $\cR$ on $D$ given by genuine conjugacies between these actions. This means that we have decomposed $M$ as the crossed product of $B$ and a genuine action of the groupoid $\cG$, which is exactly what we have to prove.

So it remains to prove the statement above. Denote by $\cP_x$ the normalizer of $B_x$ inside $D_x$. Note that $\cP_x \subset \cU(D_x)$ is a closed subgroup and that $\cP_x = \{ b u(x,g) \mid b \in \cU(B_x), g \in \Gamma_x\}$. We obtain the natural homomorphism $\cP_x \recht \Gamma_x : b u(x,g) \mapsto g$. We denote by $P_x$ the Polish space of homomorphic lifts $\Gamma_x \recht \cP_x$, which we can identify with the space of $1$-cocycles for the action $\Gamma_x \actson B_x$. We have a natural action $\cU(B_x) \actson P_x$ given by conjugation.

Given a lifting homomorphism $\gamma : \Gamma_y \recht \cP_y$ and given $(x,y) \in \cR$, we define the lifting homomorphism $\beta_{(x,y)}(\gamma) : \Gamma_x \recht \cP_x$ given by
$$\beta_{(x,y)}(\gamma) = \theta_{(x,y)} \circ \gamma \circ \delta_{(x,y)}^{-1} \; .$$
We have found a measurable field of homeomorphisms $\beta_{(x,y)} : P_y \recht P_x$ satisfying $\beta_{(x,y)} \circ \beta_{(y,z)} = \beta_{(x,z)}$. We also have the measurable field of Polish group isomorphisms $\theta_{(x,y)} : \cU(B_y) \recht \cU(B_x)$ satisfying $\theta_{(x,y)} \circ \theta_{(y,z)} = \theta_{(x,z)}$. By construction,
$$\beta_{(x,y)}( b \cdot \gamma) = \theta_{(x,y)}(b) \cdot \beta_{(x,y)}(\gamma)$$
meaning that $\theta_{(x,y)}, \be_{(x,y)}$ form a measurable field of conjugacies between the continuous Polish group actions $\cU(B_x) \actson P_x$.

By Theorem \ref{thm.amenable-1-cocycle-approx-vanish} below, the actions $\cU(B_x) \actson P_x$ have dense orbits. By Theorem \ref{thm.equivariant-section} below, we can thus find a measurable section $X \ni x \mapsto \pi(x) \in P_x$ and a $1$-cocycle $\cR \ni (x,y) \mapsto v(x,y) \in \cU(B_x)$ satisfying
$$v(x,y) \cdot \be_{(x,y)}(\pi(y)) = \pi(x) \quad\text{for all $(x,y) \in \cR$.}$$
Writing $\pi(x)(g) = w(x,g) u(x,g)$, the theorem is proved.
\end{proof}

\begin{corollary}\label{cor.always-exists-Cartan}
Let $(M,\tau)$ be an amenable tracial von Neumann algebra with separable predual and $B \subset M$ a regular von Neumann subalgebra. Then $B' \cap M = \cZ(B)$ if and only if there exists a Cartan subalgebra $A \subset M$ with $A \subset B$.
\end{corollary}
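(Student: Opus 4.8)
The plan is to prove both implications, with the nontrivial direction being ``only if''. For the easy direction, suppose $A \subset M$ is a Cartan subalgebra with $A \subset B$. Since $A$ is maximal abelian in $M$, we have $B' \cap M \subset A' \cap M = A \subset B$, and since $A \subset B$ this forces $B' \cap M \subset \cZ(B)$; the reverse inclusion $\cZ(B) \subset B' \cap M$ is automatic, so $B' \cap M = \cZ(B)$. (Regularity of $B$ is not even needed here, though it is part of the hypothesis.)

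For the converse, assume $B' \cap M = \cZ(B)$. First I would reduce to the factorial case: since $M$ is amenable and $B \subset M$ is regular with $B' \cap M = \cZ(B)$, decompose $M$ (as in Section \ref{sec.groupoids}) as the cocycle crossed product of a free cocycle action $(\al,u)$ of an amenable pmp discrete measured groupoid $\cG$ on a measurable field of tracial factors $(B_x)_{x \in X}$, with $L^\infty(X) = \cZ(B)$. By Theorem \ref{thm.two-cocycle-vanishing}, the cocycle $u$ untwists, so after cocycle conjugacy we may assume $(\al,u) = (\al,1)$ is a genuine action of $\cG$ on $(B_x)_{x \in X}$ and $M = B \rtimes_\al \cG$. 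Now the isotropy groups $\Gamma_x$ are countable amenable, and the restricted action $\Gamma_x \actson B_x$ is free (hence outer, as $B_x$ is a factor). Since $B_x$ is an amenable II$_1$ factor (being a direct-integral component of the amenable $B \subset M$) — or if $B_x$ is type I$_n$, the relevant statement is even easier — each $B_x$ admits a Cartan subalgebra $A_x \subset B_x$ that is globally invariant under $\Gamma_x$ with $\Gamma_x$ acting by an outer action on $\cR_{A_x \subset B_x}$; indeed by the uniqueness of free actions of amenable groups on the hyperfinite factor (\cite{O85}) one can even model $\al|_{\Gamma_x}$ inside $\cN_{B_x}(A_x)$. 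The main work is to make these choices \emph{equivariantly in $x$} so that the resulting field $(A_x \subset B_x)_{x\in X}$ is globally preserved by the full action of $\cG$, not just by the isotropy. This is precisely the situation handled by Theorem \ref{thm.equivariant-section} (used already in the proof of Theorem \ref{thm.two-cocycle-vanishing}), combined with the approximate uniqueness of such Cartan-compatible conjugacies — essentially the statement of Theorem \ref{thm.approx-vanish-Cartan} referenced in the introduction. Once one has a $\cG$-invariant field $(A_x)_{x\in X}$, set $A_0 = \int^\oplus_X A_x \, d\mu(x)$; then $A_0 \subset B$ and $A_0$ is invariant under $\al$, so $A := A_0 \subset B \subset M = B \rtimes_\al \cG$ has the property that its normalizer in $M$ contains $\cN_B(A_0)$ together with the canonical groupoid unitaries $u(\cU)$, which together generate $M$. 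Hence $A$ is regular in $M$. Maximal abelianness of $A$ in $M$ follows because $A_x$ is maximal abelian in $B_x$ and the action of $\cG$ on the field is free with $A_0 \supset L^\infty(X) = \cZ(B)$: any element of $A' \cap M$ must lie in $B' \cap A_0'$-relative position and then a direct-integral computation forces it into $A_0$. Therefore $A$ is a Cartan subalgebra of $M$ contained in $B$.

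Actually, I expect the cleanest route avoids re-deriving the Cartan statement from scratch: one invokes the second part of Theorem \ref{thm-A.two-cocycle-vanishing} (i.e.\ Theorem \ref{thm.two-cocycle-vanishing-Cartan}) applied to a ``model'' cocycle action of $\cG$ on $B$ that does normalize a Cartan subalgebra and acts outerly on the associated equivalence relation — the introduction explicitly notes that every groupoid $\cG$ admits such a model action. Combining the classification Theorem \ref{thm.classification-amenable} (which says $B \subset M$ is determined up to automorphism of $M$ by $\cG$ and the type of $B$) with the existence of the model action realizing $\cG$ with a visible Cartan subalgebra, one transports that Cartan subalgebra back through the conjugating automorphism to obtain $A \subset B \subset M$.

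The main obstacle is the equivariance step: producing a single measurable, $\cG$-invariant field of Cartan subalgebras $(A_x \subset B_x)_{x \in X}$ rather than merely isotropy-invariant ones for each fixed $x$. This requires (a) the existence result for a single $x$ in a form compatible with the groupoid action — i.e.\ knowing $\al|_{\Gamma_x}$ can be modeled inside $\cN_{B_x}(A_x)$, which is the content of \cite{O85} combined with \cite{BG84} / \cite{CFW81} for the Cartan layer — and (b) an ``approximate uniqueness'' statement for these Cartan-compatible vanishing/conjugacy data, so that Theorem \ref{thm.equivariant-section} applies to glue the pointwise choices into a global section. Verifying that the hypotheses of Theorem \ref{thm.equivariant-section} are met (dense orbits of the relevant Polish group actions) is where the Cartan version Theorem \ref{thm.approx-vanish-Cartan} of the approximate $1$-cocycle vanishing enters; everything else is a direct-integral bookkeeping argument of the type already carried out in the proof of Theorem \ref{thm.two-cocycle-vanishing}.
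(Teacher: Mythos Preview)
Your ``cleanest route'' is exactly the paper's proof: after reducing to the type II$_1$ case (the type I$_n$ parts being handled directly), the paper untwists via Theorem \ref{thm.two-cocycle-vanishing}, picks an arbitrary free action of $\cG$ on a field of Cartan inclusions $(C_x \subset D_x)$ with $D_x$ hyperfinite II$_1$, forms the crossed product $C \subset D \subset N$, and then invokes Theorem \ref{thm.classification-amenable} to get $\theta : N \recht M$ with $\theta(D) = B$, so that $A = \theta(C)$ is the desired Cartan subalgebra.

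Your first, longer approach---building a $\cG$-equivariant field $(A_x)_{x \in X}$ of Cartan subalgebras directly via Theorem \ref{thm.equivariant-section}---is in principle workable, but it is essentially reproving a piece of Theorem \ref{thm.ocneanu-eq-rel-groupoid} (uniqueness of the $\cG$-action up to cocycle conjugacy preserving a Cartan). The approximate-uniqueness input you would need is not quite Theorem \ref{thm.approx-vanish-Cartan} (approximate vanishing of $1$-cocycles) but rather Lemma \ref{lem.approx-inner-cartan} (approximate innerness of cocycle self-conjugacies), since the Polish space you are choosing sections in is the space of $\Gamma_x$-invariant Cartan subalgebras together with the conjugacy data, acted on by $\cN_{B_x}(A_x)$. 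The paper avoids this entirely by packaging all of that work into Theorem \ref{thm.classification-amenable} and then just transporting a model.
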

\begin{proof}
If $A \subset M$ is a Cartan subalgebra with $A \subset B$, we have $B' \cap M \subset A' \cap M = A \subset B$. Therefore, $B' \cap M = \cZ(B)$.

Conversely, assume that $B \subset M$ is a regular von Neumann subalgebra satisfying $B' \cap M = \cZ(B)$. As in Remark \ref{rem.what-if-non-factorial}, denote for every $n \geq 1$, by $z_n \in \cZ(B)$ the largest central projection such that $B z_n$ is of type I$_n$. Since $B \subset M$ is regular, we have that $z_n \in \cZ(M)$. Since it is straightforward to construct Cartan subalgebras $A_n \subset M z_n$ with $A_n \subset B z_n$ (and such that $B z_n \cong \M_n(\C) \ot A_n$), we may assume that $B$ is of type II$_1$.

By Theorem \ref{thm.two-cocycle-vanishing}, we can write $B \subset M$ as the crossed product inclusion $B \subset B \rtimes \cG$, where $\cG$ is a pmp amenable discrete measured groupoid acting freely on $B$. Write $X = \cG^{(0)}$ and choose any free action of $\cG$ on a field of Cartan subalgebras $(C_x \subset D_x)_{x \in X}$, with each $D_x$ being the hyperfinite II$_1$ factor. Let $C \subset D \subset N$ be the corresponding crossed product inclusion. By Theorem \ref{thm.classification-amenable}, there exists a $*$-isomorphism $\theta : N \recht M$ satisfying $\theta(D) = B$. Writing $A = \theta(C)$, we have found the required Cartan subalgebra of $M$.
\end{proof}

Although the formulation of the following result looks quite different from the `cohomology lemmas' in \cite[Appendix]{Jones-Takesaki-82} and \cite[Theorem 5.5]{Sutherland85}, the proof is very similar to the proof of \cite[Theorem 5.5]{Sutherland85}.

\begin{theorem}\label{thm.equivariant-section}
Let $\cR$ be a countable nonsingular equivalence relation on the standard probability space $(X,\mu)$. Let $(G_x \actson P_x)_{x \in X}$ be a measurable field of continuous actions of Polish groups on Polish spaces, on which $\cR$ is acting by conjugacies: we have measurable fields of group isomorphisms $\cR \ni (x,y) \mapsto \delta_{(x,y)} : G_y \recht G_x$ and homeomorphisms $\cR \ni (x,y) \mapsto \be_{(x,y)} : P_y \recht P_x$ satisfying
$$\delta_{(x,y)} \circ \delta_{(y,z)} = \delta_{(x,z)} \quad , \quad \be_{(x,y)} \circ \be_{(y,z)} = \be_{(x,z)} \quad\text{and}\quad \beta_{(x,y)}(g \cdot \pi) = \delta_{(x,y)}(g) \cdot \beta_{(x,y)}(\pi)$$
for all $(x,y),(y,z) \in \cR$ and $g \in G_y$, $\pi \in P_y$. Let $X \ni x \mapsto \pi(x) \in P_x$ be a measurable section.

Assume that $\cR$ is amenable and assume that for all $(x,y) \in \cR$, the element $\pi(x)$ belongs to the closure of $G_x \cdot \be_{(x,y)}(\pi(y))$.

Then, there exists a measurable family $\cR \ni (x,y) \mapsto v(x,y) \in G_x$ and a section $X \ni x \mapsto \pi'(x) \in P_x$ satisfying the following properties.
\begin{itemlist}
\item $v$ is a $1$-cocycle: $v(x,y) \, \delta_{(x,y)}(v(y,z)) = v(x,z)$ for all $(x,y), (y,z) \in \cR$.
\item $v(x,y) \cdot \be_{(x,y)}(\pi'(y)) = \pi'(x)$ for all $(x,y) \in \cR$.
\end{itemlist}
\end{theorem}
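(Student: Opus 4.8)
The plan is to follow the strategy of \cite[Theorem 5.5]{Sutherland85}: first treat the two ``building block'' cases in which $\cR$ has a particularly simple structure, and then patch them together using amenability. Concretely, since $\cR$ is amenable, one may (up to measure zero) partition $X$ into $\cR$-invariant Borel sets $X = X_0 \sqcup X_1$, where $\cR|_{X_0}$ has finite orbits (hence admits a Borel fundamental domain) and $\cR|_{X_1}$ is generated by a single free nonsingular $\Z$-action $T$. It suffices to construct $v$ and $\pi'$ separately on each piece, so I would handle the two cases in turn.

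On the finite part $X_0$, I would fix a Borel fundamental domain $Y \subset X_0$ and, for each $x \in X_0$, let $r(x) \in Y$ be the unique point in its orbit. For $x \in Y$ simply set $\pi'(x) = \pi(x)$ and $v(x,x) = e$; then the cocycle identity \emph{forces} $v(x,y) = v(x,r(x)) \, \delta_{(x,y)}(v(y,r(x))^{-1})$ once we know the ``spokes'' $v(x,r(x))$, and the equivariance equation $v(x,y)\cdot\be_{(x,y)}(\pi'(y)) = \pi'(x)$ reduces to choosing, for each $x$ in the (finite, a.e.) orbit, an element $v(x,r(x)) \in G_x$ with $v(x,r(x))\cdot\be_{(x,r(x))}(\pi(r(x))) = \pi(x)$. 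Here is where the hypothesis is used: $\pi(x)$ lies in the \emph{closure} of the $G_x$-orbit of $\be_{(x,r(x))}(\pi(r(x)))$, so such an exact $v(x,r(x))$ need not exist. The fix is to enlarge the target: one does not insist $\pi'(x) = \pi(x)$ off $Y$, but rather picks $\pi'(x) \in P_x$ in the $G_x$-orbit of $\be_{(x,r(x))}(\pi(r(x)))$ that is close to $\pi(x)$; since the orbit map is continuous and the orbit is dense at $\pi(x)$, one can make a measurable such choice (using a measurable selection theorem, as in the Remark), and then $v(x,r(x))$ exists by construction. Measurability of all choices follows from von Neumann's selection theorem applied to the Borel graph of $\cR$ and the Borel structure on the field $(G_x \actson P_x)$.

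The $\Z$-part $X_1$ is the genuinely dynamical case and I expect it to be the main obstacle. Here $\cR|_{X_1}$ is generated by $x \mapsto T x$, and a $1$-cocycle $v$ is determined by the single measurable map $x \mapsto a(x) := v(Tx, x) \in G_{Tx}$, via telescoping. The equivariance equation becomes a recursion $\pi'(Tx) = a(x)\cdot \be_{(Tx,x)}(\pi'(x))$, and one wants to solve for $a$ and $\pi'$ simultaneously with $\pi'$ a legitimate measurable section. The standard device, exactly as in \cite{Sutherland85}, is an exhaustion/telescoping argument: one builds $\pi'$ and $a$ on larger and larger ``towers'' (Rokhlin-type columns for $T$, which exist since $T$ is aperiodic) by successive approximation, using at each stage the density hypothesis $\pi(x)\in \overline{G_x\cdot\be_{(x,y)}(\pi(y))}$ to correct the section by an element of $G_x$ so that the required relation holds on the part of $X_1$ already treated, while keeping the sequence of approximate sections Cauchy in an appropriate Polish metric on the field $(P_x)$ so that the limit is again a measurable section. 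The amenability of $\cR$ enters only through this reduction to the hyperfinite $\Z$-case; the real work is the careful inductive construction on the tower, managing measurability and the metric estimates so that the limiting $\pi'$ exists and the limiting $a$ (equivalently $v$) satisfies the cocycle identity. This is where one mirrors the proof of \cite[Theorem 5.5]{Sutherland85} most closely, replacing unitary groups and their actions on cocycle spaces by the abstract Polish data $(G_x\actson P_x)$ — the argument there never uses more than continuity of the action, density of the relevant orbits, and completeness.
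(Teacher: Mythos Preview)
Your plan follows the same Sutherland-style strategy as the paper and would succeed, but the paper's execution differs in two ways worth noting. First, the paper does not split $X$ into a finite-orbit part and a $\Z$-part; it directly invokes \cite{CFW81} to write $\cR=\bigcup_n \cR_n$ as an increasing union of \emph{finite} subequivalence relations and runs a single induction over $n$, extending the cocycle at each step via the free-product decomposition $\cR_{n+1}=\cR_n * \cS$ (where $\cS$ is the restriction of $\cR_{n+1}$ to a fundamental domain for $\cR_n$). Your finite-orbit case then collapses to the base step and needs no density at all: simply transport $\pi$ from the fundamental domain and take $v=e$ on the spokes. Second --- and this is where your sketch is genuinely imprecise --- the paper arranges that the cocycle \emph{stabilizes}: $v_{n+1}|_{\cR_n}=v_n$, so that $v=\bigcup_n v_n$ is defined without any limit taken in $G_x$; only the section $\pi_n(x)$ is a Cauchy sequence in $P_x$. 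Your phrase ``the limiting $a$'' is the one place I would push back: the density hypothesis controls orbits in $P_x$ but gives no convergence in $G_x$, so if your Rokhlin towers are not nested you are redefining $a$ at each stage with no mechanism for it to converge. The fix is exactly what the paper does: use a nested (hyperfinite) filtration, \emph{extend} the cocycle at each step rather than modify it, and let only the section be obtained as a limit.
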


\begin{proof}
By \cite[Theorem 10]{CFW81}, the equivalence relation $\cR$ is hyperfinite. Fix subequivalence relations $\cR_0 \subset \cR_1 \subset \cdots$ having finite orbits such that $\cR = \bigcup_n \cR_n$, up to measure zero. We assume that $\cR_0 = \{(x,x) \mid x \in X\}$ is the trivial subequivalence relation. To prove the theorem, we inductively define $1$-cocycles $\cR_n \ni (x,y) \mapsto v_n(x,y) \in G_x$ and sections $X \ni x \mapsto \pi_n(x) \in P_x$ with the following properties.
\begin{itemlist}
\item For every $n \geq 0$, the restriction of $v_{n+1}$ to $\cR_n$ equals $v_n$.
\item Defining $\beta_{n,(x,y)}(\pi) = v_n(x,y) \cdot \beta_{(x,y)}(\pi)$ for all $\pi \in P_y$, $(x,y) \in \cR_n$, $n \geq 0$, we have $\beta_{n,(x,y)}(\pi_n(y)) = \pi_n(x)$ for all $(x,y) \in \cR_n$.
\item For all $n \geq 0$ and all $x \in X$, we have that $\pi_n(x) \in G_x \cdot \pi(x)$.
\item For all $x \in X$, the sequence $\pi_n(x) \in P_x$ is convergent.
\end{itemlist}
Once these statements are proven, we can define $\pi'(x) = \lim_n \pi_n(x)$ and $v(x,y) = v_n(x,y)$ whenever $(x,y) \in \cR_n$.

Choose a measurable family of metrics $d_x$ on $P_x$ that induce the topology on $P_x$. Define $v_0(x,x) = 1$ and $\pi_0(x) = \pi(x)$. Then assume that $n \geq 0$ and that $v_n$ and $\pi_n$ have been defined. Write $\beta_{n,(x,y)}(\pi) = v_n(x,y) \cdot \beta_{(x,y)}(\pi)$. Choose a fundamental domain $X_0 \subset X$ for $\cR_n$. Every $y \in X_0$ has a finite $\cR_n$-orbit. Therefore, since $\beta_{n,(x,y)} : P_y \recht P_x$ is a homeomorphism, we can choose a measurable function $y \mapsto \eps_y > 0$ such that
\begin{multline*}
d_x(\beta_{n,(x,y)}(\pi),\beta_{n,(x,y)}(\pi_n(y))) < 2^{-n} \\ \text{whenever $y \in X_0, (x,y) \in \cR_n, \pi \in P_y$ and $d_y(\pi,\pi_n(y)) < \eps_y$.}
\end{multline*}
Denote by $\cS$ the restriction of $\cR_{n+1}$ to $X_0$. Choose a fundamental domain $X_1 \subset X_0$ for $\cS$. Note that $X_1$ also is a fundamental domain for $\cR_{n+1}$.

Whenever $y \in X_1$ and $(x,y) \in \cS$, by assumption, $\pi(x)$ belongs to the closure of
$$G_x \cdot \beta_{(x,y)}(\pi(y)) = G_x \cdot \beta_{(x,y)}(\pi_n(y))  \; .$$
Since $\pi_n(x) \in G_x \cdot \pi(x)$, also $\pi_n(x)$ belongs to the closure of $G_x \cdot \beta_{(x,y)}(\pi_n(y))$.

Defining $\cS_1 = \{(x,y) \in \cS \mid x \in X_0 \setminus X_1 , y \in X_1 \}$, we can thus choose a measurable map $\cS_1 \ni (x,y) \mapsto w(x,y) \in G_x$ such that
$$d_x(\pi_n(x),w(x,y) \cdot \beta_{(x,y)}(\pi_n(y))) < \min \{\eps_x, 2^{-n}\} \quad\text{for all $(x,y) \in \cS_1$.}$$
Since $\cR_{n+1}$ is the free product of $\cR_n$ and $\cS$, we can define $v_{n+1}$ as the unique $1$-cocycle $\cR_{n+1} \ni (x,y) \mapsto v_{n+1}(x,y) \in G_x$ satisfying $v_{n+1}(x,y) = v_n(x,y)$ for all $(x,y) \in \cR_n$ and $v_{n+1}(x,y) = w(x,y)$ for all $(x,y) \in \cS$. Define
$$\beta_{n+1,(x,y)}(\pi) = v_{n+1}(x,y) \cdot \beta_{(x,y)}(\pi) \quad\text{for all $(x,y) \in \cR_{n+1}$ and $\pi \in P_y$.}$$

Uniquely define the section $X \ni x \mapsto \pi_{n+1}(x) \in P_x$ given by
\begin{align*}
& \pi_{n+1}(x) = \pi_n(x) \quad\text{if $x \in X_1$, and}\\
& \pi_{n+1}(x) = \beta_{n+1,(x,y)}(\pi_n(y)) \quad\text{if $x \in X \setminus X_1$, $y \in X_1$, $(x,y) \in \cR_{n+1}$.}
\end{align*}
By construction, $\beta_{n+1,(x,y)}(\pi_{n+1}(y)) = \pi_{n+1}(x)$ for all $(x,y) \in \cR_{n+1}$.

When $x \in X_0 \setminus X_1$, take the unique $y \in X_1$ with $(x,y) \in \cR_{n+1}$. Then $(x,y) \in \cS_1$, so that $\pi_{n+1}(x) = w(x,y) \cdot \beta_{(x,y)}(\pi_n(y))$ and thus
$$d_x(\pi_{n+1}(x),\pi_n(x)) < \min \{\eps_x, 2^{-n}\} \; .$$
When $x \in X \setminus X_0$, take the unique $y \in X_0$ with $(x,y) \in \cR_n$. Since $d_y(\pi_{n+1}(y),\pi_n(y)) < \eps_y$ and $\beta_{n,(x,y)}(\pi_n(y)) = \pi_n(x)$, we find that
$$d_x(\beta_{n,(x,y)}(\pi_{n+1}(y)),\pi_n(x)) < 2^{-n} \; .$$
Because $(x,y) \in \cR_n$, we get that $\beta_{n,(x,y)}(\pi_{n+1}(y)) = \beta_{n+1,(x,y)}(\pi_{n+1}(y)) = \pi_{n+1}(x)$. So, we have proved that $d_x(\pi_{n+1}(x),\pi_n(x)) < 2^{-n}$ for all $x \in X$.

Continuing inductively, it follows that for all $x \in X$, the sequence $\pi_n(x) \in P_x$ is convergent. This concludes the proof of the theorem.
\end{proof}

\section{\boldmath Approximate vanishing of $1$-cocycles}

Recall that a trace preserving action $\Gamma \actson^\si (B,\tau)$ on a tracial von Neumann algebra is called \emph{strongly ergodic} if any sequence $b_n$ in the unit ball of $B$ satisfying the approximate invariance property $\lim_n \|\si_g(b_n)-b_n\|_2 = 0$ for all $g \in \Gamma$ must be approximately scalar, in the sense that $\lim_n \|b_n - \tau(b_n)1\|_2 = 0$.

\begin{theorem}\label{thm.amenable-1-cocycle-approx-vanish}
Let $\Gamma$ be a countable group. The following properties are equivalent.
\begin{itemlist}[labelwidth=6ex,labelsep=0.5ex,leftmargin=6.5ex]
\item[(i)] $\Gamma$ is amenable.

\item[(ii)] No free trace preserving action $\Gamma \actson^\si (B,\tau)$ on a tracial diffuse von Neumann algebra is strongly ergodic.

\item[(ii')] For every free trace preserving action $\Gamma \actson^\si (B,\tau)$ on a tracial diffuse von Neumann algebra and for every non-principal ultrafilter $\om$ on $\N$, the ultrapower action $\si^\om$ on $\cB = B^\om$ has a diffuse fixed point algebra $\cB^{\si^\om}$.

\item[(ii'')] The Bernoulli $\Gamma$-action $\Gamma\curvearrowright^\sigma R= (M_{2\times 2}(\C))^{\ovt \Gamma}$ with base $M_{2\times 2}(\C)$ is not strongly ergodic.

\item[(iii)] For every free action $\Gamma \actson^\si N$ on a II$_1$ factor $N$, every $1$-cocycle $(w_g)_{g \in \Gamma}$ is approximately a co-boundary: for every finite subset $F\subset \Gamma$ and every $\eps >0$, there exists $u\in \cU(N)$ such that $\|u\sigma_g(u^*)-w_g\|_2\leq \varepsilon$ for all $g\in F$.

\item[(iii')] The previous statement with $N$ being the hyperfinite II$_1$ factor.

\item[(iv)] Let $\Gamma\curvearrowright^{\sigma_n} N_n$ be a sequence of free $\Gamma$-actions on II$_1$ factors. Let $\omega$ be a non-principal ultrafilter on $\N$ and denote by $\cN=\prod_\omega N_n$ the corresponding ultraproduct II$_1$ factor with $\sigma=\prod_\omega \sigma_n$ the ultraproduct $\Gamma$-action, where $\sigma_g((x_n)_n)=(\sigma_{n,g}(x_n))_n$. Then, the fixed point algebra $\cN^\sigma$ is an irreducible subfactor of $\cN$.

\item[(v)] In the same setting as {\it (iv)}, every $1$-cocycle for $\sigma=\prod_\omega \sigma_n$ is a co-boundary.
\end{itemlist}
\end{theorem}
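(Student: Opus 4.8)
\textbf{Proof proposal for Theorem \ref{thm.amenable-1-cocycle-approx-vanish}.}

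The plan is to establish the cycle of implications
$(i)\Rightarrow(ii'')\Rightarrow(ii)\Rightarrow(ii')\Rightarrow(v)\Rightarrow(iv)\Rightarrow(iii')\Rightarrow(iii)\Rightarrow(i)$,
together with a few parallel shortcuts. The easy arrows are the ``specializations'': $(ii)\Rightarrow(ii'')$, $(iii)\Rightarrow(iii')$ and $(iv)\Rightarrow$ the case of a constant sequence (which gives $(iii)$ via a diagonal/reindexing argument over $\om$) are trivial, and $(v)\Leftrightarrow(iv)$ is the standard equivalence between $1$-cocycle vanishing and irreducibility of the fixed-point algebra: a $1$-cocycle $(w_g)$ is a coboundary iff the unitary it defines can be absorbed, and the obstruction lives precisely in the relative commutant $\cN^\si{}'\cap\cN$; I would phrase this inside the ultraproduct $\cN=\prod_\om N_n$ using that $\cN$ is a II$_1$ factor and the fixed-point algebra is globally invariant. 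The implication $(ii'')\Rightarrow(ii)$ is a contrapositive: if some free action is strongly ergodic, tensoring with the Bernoulli action and using that a tensor product of a strongly ergodic action with any action is again strongly ergodic (a Hilbert-space argument on the $L^2$-level) forces the Bernoulli action to be strongly ergodic, hence non-amenability of $\Gamma$ via $(i)\Leftrightarrow(ii'')$.

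For $(i)\Rightarrow(ii'')$ I would use a Følner sequence: given $\eps>0$ and a finite $F\subset\Gamma$, take an almost-invariant finite set $S\subset\Gamma$ and let $b$ be the projection $p\ot\cdots$ living on the coordinates in $S$, suitably averaged so that $\|\si_g(b)-b\|_2$ is small while $\tau(b)$ stays bounded away from $0$ and $1$; this is the classical construction showing amenable Bernoulli actions are never strongly ergodic. The implications $(ii'')\Rightarrow(i)$ and $(iii')\Rightarrow(i)$ (equivalently, the ``only amenable groups work'' direction) are the well-known converse: a non-amenable $\Gamma$ has a strongly ergodic (even spectral-gap) action, e.g.\ the Bernoulli action, by Schmidt/Connes--Weiss, and strong ergodicity of $\si$ on $(B,\tau)$ is directly incompatible with the approximate coboundary statement in $(iii)$ — indeed, take the $1$-cocycle built from a nonscalar almost-invariant sequence inside a crossed product, and note an approximate coboundary would force that sequence to be approximately scalar.

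The genuinely hard step is $(ii')\Rightarrow(v)$, i.e.\ upgrading ``the fixed-point algebra of the ultrapower on an arbitrary diffuse $B$ is diffuse'' to ``every $1$-cocycle on a II$_1$ factor is an honest coboundary in the ultraproduct''. Here is the strategy I expect to need. Given free actions $\Gamma\actson^{\si_n}N_n$ and a $1$-cocycle $(w_g)_{g\in\Gamma}$ for $\si=\prod_\om\si_n$, form the crossed product $\cN\rtimes_\si\Gamma$ and inside it consider the diffuse algebra generated by $\cN$ together with the unitaries $v_g=w_g\,u_g$; the cocycle identity makes $g\mapsto v_g$ a new copy of $\Gamma$, and the von Neumann algebra it generates, call it $B$, carries a free $\Gamma$-action whose ``twist'' relative to the original one is exactly measured by $(w_g)$. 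One then applies $(ii')$ — in the form of a diffuse, hence nonatomic, fixed-point algebra for the ultrapower action — to produce, via a maximality / exhaustion argument over finite subsets of $\Gamma$ and a Følner-type reindexing, a unitary $U\in\cN$ (or first in $\cN^\om$, then transferred) with $U\si_g(U^*)=w_g$. The subtlety, and the main obstacle, is the interplay of \emph{two} limits: the ultraproduct over $n$ that is already built into $\cN$, and the ``approximate'' nature of the coboundary that one gets at each finite stage — one must arrange a single diagonal sequence $(U^{(k)})$ whose $\om$-limit genuinely conjugates the whole cocycle, which requires the approximate solutions at stage $F_k$ to be chosen compatibly (this is where amenability of $\Gamma$, through the Følner condition, is used to control the error uniformly, and where a separability/countable-generation reduction on $B$ is needed so that the field of fixed-point algebras can be handled measurably as in Section \ref{sec.groupoids}). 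Once this is in hand, $(v)$ follows, and hence $(iv)$, $(iii')$, $(iii)$, closing the loop.
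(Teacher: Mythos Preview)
Your cycle is oriented the wrong way in several places, and this hides the actual content of the theorem. The implications $(ii')\Rightarrow(ii)\Rightarrow(ii'')$ and $(iii)\Rightarrow(iii')$ are the trivial ``specialization'' directions; your main cycle instead runs through $(ii'')\Rightarrow(ii)\Rightarrow(ii')$ and $(iii')\Rightarrow(iii)$, which are the hard directions, and you give no real argument for them. (Your sketch for $(ii'')\Rightarrow(ii)$ --- tensoring a strongly ergodic action with the Bernoulli action --- does not work: it is not true that a tensor product with a strongly ergodic factor remains strongly ergodic, and in any case you would be concluding non-amenability of $\Gamma$ from $(i)\Leftrightarrow(ii'')$, which is circular at that stage of the cycle.) Likewise $(v)\Leftrightarrow(iv)$ is not ``standard'': only $(iv)\Rightarrow(iii)\Rightarrow(v)$ is clear via the $2\times 2$ matrix trick and reindexation; the converse direction $(v)\Rightarrow(iv)$ does not follow from a relative-commutant obstruction in the way you describe.

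The substantive gap is your proposed step $(ii')\Rightarrow(v)$. Knowing only that the ultrapower fixed-point algebra is \emph{diffuse} is far weaker than knowing it is an \emph{irreducible subfactor}, and it is precisely irreducibility that makes the $2\times 2$ trick produce a coboundary (one needs $e_{11}\sim e_{22}$ inside the fixed-point algebra). Your maximality/exhaustion sketch does not explain how to manufacture, for an arbitrary II$_1$ factor $N$, almost $\Gamma$-invariant elements that are in addition asymptotically \emph{free} from any prescribed $x\in N$ --- and that is exactly what is needed to force $E_{(\cN^\si)'\cap\cN}(x)=\tau(x)1$. The paper does not attempt $(ii')\Rightarrow(v)$ at all; it proves $(i)\Rightarrow(iv)$ directly by a free-probability argument you are missing entirely: using Popa's free-independence results one embeds inside $N^\om$ a $\si^\om$-invariant copy of the \emph{free} Bernoulli action $\Gamma\actson N^{*\Gamma}$ that is freely independent from $N$; amenability (via a F{\o}lner average of the semicircular generators, i.e.\ the free Gaussian functor applied to $\ell^2\Gamma$) then gives almost-invariant semicirculars, and free independence from $N$ yields the irreducibility. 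Once $(i)\Rightarrow(iv)$ is in hand, the remaining arrows in the paper are $(iv)\Rightarrow(iii)$ (matrix trick), $(iii)\Leftrightarrow(v)$ (reindexation), $(iv)\Rightarrow(ii')\Rightarrow(ii)\Rightarrow(ii'')$ (trivial), and the returns $(ii'')\Rightarrow(i)$ (Jones) and $(iii')\Rightarrow(i)$ via a Connes--St{\o}rmer Bernoulli construction with nontrivial weights --- the latter is also more delicate than your one-line sketch suggests.
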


\begin{proof}
{\it (i) $\Rightarrow$ (iv).} It is sufficient to prove that given any $x\in (\cN)_1$, there exists a
von Neumann subalgebra $B\subset \cN^\sigma$ such that $E_{B'\cap \cN}(x)=\tau(x) 1$. Let $x=(x_n)_n$ with $x_n\in (N_n)_1$ and let $F_n\subset \Gamma$ be an increasing sequence of finite subsets exhausting $\Gamma$. By Lemma \ref{lem.three}, for each $n$, there exists a finite dimensional abelian von Neumann subalgebra $A_n\subset N$ such that
\begin{align}
& \|\sigma_g(y)-y\|_2\leq 2^{-n} \quad\text{for all}\;\; y\in (A_n)_1, g\in F_n \; , \label{eq.4.1} \\
& \|E_{A_n'\cap N}(x_n)-\tau(x_n)1\|_2\leq 2^{-n} \; . \label{eq.4.2}
\end{align}
By \eqref{eq.4.1}, it follows that $B=\prod_\omega A_n$ is contained in the fixed point algebra $\cN^\si$, while \eqref{eq.4.2} implies that $E_{B'\cap \cN}(x)=\tau(x)1$.

{\it (iv) $\Rightarrow$ (iii).} We use Connes' $2$-by-$2$ matrix trick, as in the proof of \cite[Proposition 1.3.1$^\circ$]{Popa18}. Denote be $\tilde{\sigma}$ the $\Gamma$-action on $\tilde{N}=M_{2 \times 2}(N)=N \otimes M_{2 \times 2}(\C)$ given by $\tilde{\sigma}_g=\sigma_g \otimes \id$. Denoting by $\{e_{ij} \mid 1\leq i,j \leq 2\}$ the matrix units for $M_{2 \times 2}(\C)\subset \tilde{N}$, we get that $\tilde{w}_g=e_{11}+ w_g e_{22}$ is a $1$-cocycle for $\tilde{\sigma}$, and thus also for the ultrapower action $\tilde{\sigma}^\omega$ on ${\tilde{N}}^\om$.

Note that if we denote $\sigma'_g = (\Ad \tilde{w}_g) \circ \tilde{\sigma}_g$, then its ultrapower $(\sigma')^\omega$ coincides with $(\Ad \tilde{w}_g) \circ \tilde{\sigma}^\omega$. Thus, since we assume that {\it (iv)} holds, the fixed point algebra $Q$ of the $\Gamma$-action $(\si')^\om$ on $\tilde{N}^\omega$ is an irreducible subfactor of $\tilde{N}^\om$. Since $e_{11}, e_{22}$ are projections in $Q$, it follows that they are equivalent in $Q$ via a partial isometry $u^* e_{12}\in Q$, for some $u\in \cU(N^\omega)$. But the fact that $u^* e_{12}$ lies in the fixed point algebra $Q$ is equivalent to the fact that
$w_g = u \sigma_g^\omega(u^*)$ for all $g\in \Gamma$, i.e., $w$ is a co-boundary for $\sigma^\omega$. Then, {\it (iii)} follows.

{\it (iii) $\Rightarrow$ (v).} By {\it (iii)}, every $1$-cocycle for $\si$ is approximately a co-boundary. By the usual re-indexation argument, the $1$-cocycle is a true co-boundary.

{\it (v) $\Rightarrow$ (iii)} follows by taking $N_n = N$ for all $n$.

{\it (ii') $\Rightarrow$ (ii) $\Rightarrow$ (ii'')}, as well as {\it (iii) $\Rightarrow$ (iii')}, are trivial.

{\it (iv) $\Rightarrow$ (ii').} Let $z \in \cZ(B)$ be the largest projection such that $\cZ(B) z$ is diffuse. Then, $z$ is $\Gamma$-invariant and it follows from \cite{S81} that $\cZ(\cB)^{\si^\om} z$ is diffuse. For the atomic part, it follows from {\it (iv)} that $\cB^{\si^\om} (1-z)$ is diffuse.

The implication {\it (ii'') $\Rightarrow$ (i)} is contained in \cite{J81}.

{\it (iii') $\Rightarrow$ (i).} For this, we follow a similar argument to \cite[Proposition 1.3.2$^\circ$]{Popa18} and \cite[Theorem 3.2]{P01}. Let $(N_0, \varphi_0)$ be a copy of the $2$ by $2$ matrix algebra with the state given by the weights $1/3$ and $2/3$. Let $\Gamma \curvearrowright^\al (\cN, \varphi)=\overline{\otimes}_g (N_0, \varphi_0)_g$ be the Bernoulli $\Gamma$-action with base $(N_0, \varphi_0)$ and denote by $N=\cN_\vphi$ the centralizer of the state $\vphi$. Note that $N$ is isomorphic with the hyperfinite II$_1$ factor. Denote by $\si$ the corresponding Connes-St{\o}rmer Bernoulli action, given by restricting $\al$ to the II$_1$ factor $N$.

Let $B \subset N$ be a copy of the $2$ by $2$ matrix algebra with matrix units $(e_{ij})_{1 \leq i,j \leq 2}$. By our choice of $\vphi_0$ and because $N$ is a factor, we can choose an isometry $v \in \cN$ such that $\si_t^\vphi(v) = 2^{-it} v$ and $vv^* = e_{11}$. As in \cite[Proposition 1.3.2$^\circ$]{Popa18} and \cite[Theorem 3.2]{P01}, define the $1$-cocycle $(w_g)_{g \in \Gamma}$ for $\sigma$ given by
$$w_g = v \al_g(v^*) + e_{21} v \al_g(v^* e_{12}) \; .$$
Since {\it (iii')} holds, $w$ is approximately a co-boundary. Take a sequence of unitaries $a_n \in \cU(N)$ such that $\lim_n \|w_g \si_g(a_n) - a_n\|_2 = 0$ for all $g \in \Gamma$. Working in $L^2(\cN,\vphi)$, it follows that $\lim_n \|v^* w_g \si_g(a_n) - v^* a_n \|_2 = 0$ for all $g \in \Gamma$. But, $v^* w_g = \al_g(v^*)$ and we conclude that $\lim_n \|\al_g(v^* a_n) - v^* a_n \|_2 = 0$ for all $g \in \Gamma$. Since $\si_t^\vphi(v^* a_n) = 2^{it} v^* a_n$, the sequence $v^* a_n$ lies in $L^2(\cN,\vphi) \ominus \C 1$. Since
$$\|v^* a_n\|_2 = \|vv^* a_n\|_2 = \|e_{11} a_n\|_2 = \frac{1}{\sqrt{2}} \; ,$$
we conclude that the unitary representation $\pi$ of $\Gamma$ on $L^2(\cN,\vphi) \ominus \C 1$ given by the action $\al$ weakly contains the trivial representation of $\Gamma$. As in \cite{J81}, $\pi$ is weakly contained in the regular representation of $\Gamma$. So it follows that $\Gamma$ is amenable.
\end{proof}

\begin{lemma}\label{lem.one}
Let $\Gamma \curvearrowright^\sigma N$ be a free action of a countable group $\Gamma$ on a separable II$_1$ factor $N$.
Let $\omega$ be a non-principal ultrafilter on $\N$ and $\sigma^\omega$ the ultrapower action of $\Gamma$ on $N^\omega$ defined by $\sigma_g((x_n)_n)=(\sigma_g(x_n))_n$ for all $(x_n)_n \in N^\omega$ and $g\in \Gamma$.

There exists a unitary element $u\in N^\omega$ such that  $N, \sigma^\omega_g(uNu^*), g\in \Gamma$, are freely independent. Thus, if one denotes $\tilde{N}=\bigvee_g \sigma^\omega_g(uNu^*)$ then $\sigma^\omega$ leaves $\tilde{N}$ invariant and the restriction of $\sigma^\omega$ to $N\vee \tilde{N}$ is isomorphic to the diagonal free product $\Gamma \curvearrowright N* N^{*\Gamma}$ between $\sigma$ and the free Bernoulli $\Gamma$-action with base $N$.
\end{lemma}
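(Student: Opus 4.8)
The plan is to build the required unitary $u$ inside the ultrapower $N^\omega$ by a standard ``incremental freeness'' argument, exploiting the fact that $N^\omega$ contains, for any finite data, many subalgebras that are free and Haar-unitary rich. First I would reduce the statement to a finitary approximation: it suffices to produce, for each finite subset $F \subset \Gamma$ and each finite-dimensional $*$-subalgebra $N_0 \subset N$ and each $\eps > 0$, a single unitary $u \in \cU(N)$ such that the family $N_0,\ \sigma_g(uN_0u^*)$, $g \in F$, is $\eps$-close (in moments, say on a fixed finite generating set) to being freely independent with the prescribed marginal distributions; then a diagonal/re-indexation argument over an increasing exhaustion $F_k \nearrow \Gamma$, $N_0^{(k)} \nearrow$ a weakly dense chain, $\eps_k \to 0$ yields $u = (u_k)_k \in \cU(N^\omega)$ with exact freeness of $N,\ \sigma_g^\omega(uNu^*)$, $g \in \Gamma$. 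This is the usual passage from ``approximate'' to ``exact'' in an ultraproduct, and it is the reason the statement can only be expected in $N^\omega$ rather than in $N$ itself.

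The heart of the matter is the finitary step, and here the key input is that $\sigma$ is a \emph{free} action of $\Gamma$ on the II$_1$ factor $N$, so that for distinct group elements the automorphisms $\sigma_g$ move any fixed finite-dimensional subalgebra ``away from itself'' in a suitable averaged sense; combined with the existence inside $N$ of Haar unitaries that are almost free from any prescribed finite-dimensional subalgebra (a consequence of $N$ being a II$_1$ factor, via Popa's asymptotic freeness / local quantization type arguments, or simply by noting $N$ absorbs a copy of the hyperfinite factor with lots of room), one can choose $u$ to ``randomize'' the relative position of $N_0$ and all the $\sigma_g(N_0)$ simultaneously over $g \in F$. Concretely, I would pick a unitary $u$ that behaves, on the finite set of relevant matrix coefficients, like a Haar-random unitary that is free from the (finite-dimensional) algebra generated by $N_0$ together with all $\sigma_g(N_0)$, $g \in F$; conjugating $N_0$ by such a $u$ and then applying $\sigma_g$ produces copies whose joint distribution converges, as the quality of the approximation improves, to the free product distribution. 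The outer/free hypothesis on $\sigma$ is what guarantees the $\sigma_g(uN_0u^*)$ do not accidentally collapse onto one another or onto $N$.

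Once exact freeness of $N$ and $\sigma_g^\omega(uNu^*)$, $g \in \Gamma$, is established, the remaining assertions are essentially bookkeeping: since $\sigma^\omega$ permutes the set $\{\sigma_g^\omega(uNu^*) : g \in \Gamma\}$ (because $\sigma_h^\omega \sigma_g^\omega(uNu^*) = \sigma_{hg}^\omega(uNu^*)$), the von Neumann algebra $\tilde N = \bigvee_g \sigma_g^\omega(uNu^*)$ is $\sigma^\omega$-invariant, and as the generators are freely independent copies of $N$ indexed by $\Gamma$ with $\Gamma$ acting by the index shift, $\Gamma \actson^{\sigma^\omega} \tilde N$ is isomorphic to the free Bernoulli action $\Gamma \actson N^{*\Gamma}$. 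Finally, $N$ and $\tilde N$ are themselves freely independent inside $N \vee \tilde N$ (this is just freeness of the full family $N,\ \sigma_g^\omega(uNu^*)$, $g \in \Gamma$, regrouped as $N$ versus the rest, using that a free family of subalgebras, when partitioned into two blocks one of which is a single member, gives freeness of the two resulting joins), and the shift invariance matches up on the two pieces, so $\Gamma \actson^{\sigma^\omega} N \vee \tilde N$ is the diagonal free product $\Gamma \actson N * N^{*\Gamma}$ of $\sigma$ with the free Bernoulli action, as claimed.

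The step I expect to be the main obstacle is the finitary construction of $u$: making precise the sense in which a Haar-like unitary in $N$ is ``almost free'' from a prescribed finite-dimensional subalgebra, and verifying that conjugating and then applying the free automorphisms $\sigma_g$ genuinely drives the joint moments to the free-product values uniformly over $g \in F$. This is where the freeness (proper outerness) of $\sigma$ must be used in an essential, quantitative way, and where one either invokes an existing asymptotic-freeness statement (e.g.\ in the spirit of Popa's results used elsewhere in the paper) or reproduces such an estimate; everything downstream is then a routine ultraproduct and free-probability rearrangement.
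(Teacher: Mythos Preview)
Your overall architecture---reduce to a finitary approximation, then diagonalize in the ultraproduct, and finally do free-probability bookkeeping---is reasonable, and the last two paragraphs (identification of $\tilde N$ with the free Bernoulli shift and of $N\vee\tilde N$ with the diagonal free product) are fine. The gap is in the finitary step, and it is not a technicality.

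You propose to pick $u\in N$ that is approximately Haar and approximately free from the finite-dimensional algebra generated by $N_0$ and the $\sigma_g(N_0)$, $g\in F$, and then assert that $N_0,\sigma_g(uN_0u^*)$, $g\in F$, become approximately free. But $\sigma_g(uN_0u^*)=\sigma_g(u)\,\sigma_g(N_0)\,\sigma_g(u)^*$, so what you actually need to control is the \emph{joint} distribution of the unitaries $\sigma_g(u)$, $g\in F$, relative to one another and to $N_0$. Choosing $u$ free from a fixed finite-dimensional subalgebra says nothing about the relative position of $\sigma_g(u)$ and $\sigma_h(u)$ for $g\neq h$: outerness of $\sigma_g$ is far too weak to force any kind of asymptotic freeness between $u$ and $\sigma_g(u)$. (As a sanity check: applying an automorphism preserves freeness, so each $\sigma_g(u)$ is individually free from whatever $u$ was free from---but that is not mutual freeness of the family.) So as written, the heart of the argument is missing.

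The paper's proof bypasses this difficulty with a single idea: pass to the crossed product $M=N\rtimes_\sigma\Gamma$. Freeness of the action gives $N'\cap M=\C 1$, and then by \cite{P92} and \cite[Theorem 0.1]{P13} one obtains a Haar unitary $u\in N^\omega$ that is free from the \emph{entire} separable factor $M$ inside $M^\omega$. The point is that in $M$ the action becomes inner, $\sigma_g^\omega(\,\cdot\,)=u_g(\,\cdot\,)u_g^*$ with $u_g\in M$, so $\sigma_g^\omega(uNu^*)=u_g u N u^* u_g^*$. A single alternating-word computation, using only that $u$ is free from $M$ (which contains both $N$ and all $u_g$), then shows that $N$ and the $u_g u N u^* u_g^*$, $g\in\Gamma$, are mutually free. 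In other words, by making the action inner at the cost of enlarging the ambient algebra, the uncontrolled family $\{\sigma_g(u)\}_g$ is replaced by the single element $u$ together with the fixed unitaries $u_g\in M$, and one freeness hypothesis suffices. If you want to salvage your finitary scheme, this is the missing ingredient: you should be asking for $u$ to be approximately free from a finite-dimensional subalgebra of $M$ (containing the relevant $u_g$'s), not of $N$.
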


\begin{proof}
Since $N$ has trivial relative commutant in $M=N\rtimes \Gamma$, by the main theorem of \cite{P92} and \cite[Theorem 0.1]{P13}, there exists a Haar unitary $u\in N^\omega$
such that $u$ is freely independent to the separable II$_1$ factor $M$. But then, if we denote by $(u_g)_g \in M$ the canonical unitaries implementing the action $\sigma$,
it is easy to see that $N, u_guNu^*u_g^*$, $g\in \Gamma$, are all mutually free von Neumann subalgebras. Since $\sigma$ leaves $\bigvee_g u_guNu^*u_g^*$
invariant, and implements on it the free Bernoulli $\Gamma$-action, the lemma follows.
\end{proof}

\begin{lemma}\label{lem.two}
Let $\Gamma$ be a countable amenable group, $N$ a II$_1$ factor and $\Gamma \curvearrowright^\rho N^{*\Gamma}$ the free Bernoulli $\Gamma$-action with base $N$. Then $\rho$ is not strongly ergodic.
\end{lemma}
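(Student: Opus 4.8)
The plan is to run a F{\o}lner-set averaging argument, adapted to the free product, which exploits that a sum of freely independent Haar unitaries has operator norm of order the square root of the number of summands. This is the ``free'' analogue of the classical fact that Bernoulli actions of amenable groups are not strongly ergodic.

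Concretely, I would first fix a F{\o}lner sequence $(F_n)_n$ of finite subsets of $\Gamma$, so that $|gF_n \triangle F_n|/|F_n| \to 0$ for every $g \in \Gamma$ (when $\Gamma$ is finite, take $F_n = \Gamma$). Write $N^{*\Gamma}$ as the free product of copies $N_g$, $g \in \Gamma$, of $N$, with $\rho_h(N_g) = N_{hg}$, pick a Haar unitary $a \in N_e$ (which exists since $N$, a II$_1$ factor, is diffuse), and put $a_g := \rho_g(a) \in N_g$. Then $(a_g)_{g\in\Gamma}$ is a $*$-free family of Haar unitaries with $\tau(a_g)=0$. Define
\[ b_n := \frac{1}{\sqrt{|F_n|}}\sum_{g\in F_n} a_g \in N^{*\Gamma}. \]

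Then three elementary verifications finish the argument. First, by freeness and centredness $\tau(a_g^* a_h) = \delta_{g,h}$, hence $\tau(b_n)=0$ and $\|b_n\|_2 = 1$. Second, since $\rho_h(b_n) = |F_n|^{-1/2}\sum_{g\in F_n} a_{hg}$, the same orthogonality gives $\|\rho_h(b_n)-b_n\|_2^2 = |hF_n \triangle F_n|/|F_n| \to 0$ for each $h\in\Gamma$. Third, the $a_g$ with $g\in F_n$ generate a trace-preserving copy of $L(\F_{|F_n|})$ inside $N^{*\Gamma}$, with the $a_g$ corresponding to the canonical generators, so Haagerup's inequality for free groups, applied to the length-one element $\sum_{g\in F_n} a_g$, yields $\|\sum_{g\in F_n} a_g\| \le 2\sqrt{|F_n|}$, i.e.\ $\|b_n\| \le 2$. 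Consequently $c_n := \tfrac12 b_n$ lies in the unit ball of $N^{*\Gamma}$, satisfies $\|\rho_h(c_n)-c_n\|_2\to 0$ for all $h$, yet $\|c_n - \tau(c_n)1\|_2 = \tfrac12$ for every $n$; so $\rho$ is not strongly ergodic.

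The one non-formal point is the uniform bound $\sup_n\|b_n\|<\infty$, and this is precisely where freeness of the Bernoulli action enters: for the classical tensor Bernoulli action the analogous averaged element would have norm of order $|F_n|$ and the argument collapses, whereas free independence provides enough cancellation to keep $\|\sum_{g\in F_n} a_g\|$ of order $\sqrt{|F_n|}$. I would obtain this from Haagerup's inequality, equivalently from the classical computation that a sum of $k$ freely independent Haar unitaries has operator norm $O(\sqrt k)$; everything else is bookkeeping with the F{\o}lner condition.
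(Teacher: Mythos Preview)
Your proof is correct and follows the same overall F{\o}lner-averaging strategy as the paper, but with a different choice of test element and a different mechanism for the operator-norm bound. The paper picks a \emph{semicircular} element $a = a^* \in N$ rather than a Haar unitary; then the normalized sum $b_n = |K_n|^{-1/2}\sum_{g\in K_n} a_g$ is again semicircular (by the additivity of free semicirculars, i.e.\ the free Gaussian functor applied to the regular representation), so the bound $\|b_n\| = 2$ is immediate without invoking Haagerup's inequality. Your route via Haar unitaries works equally well, but requires the external input of Haagerup's inequality for $\F_k$ to control $\|\sum_g a_g\|$; the paper's choice makes the norm bound self-contained. Everything else---the orthogonality computation, the approximate invariance via the F{\o}lner condition, and the conclusion---is identical in both arguments.
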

\begin{proof}
Let $a=a^*\in N$ be a semi-circular element and denote by $a_g$ its identical copies in position $g \in \Gamma$ inside the free product $N^{*\Gamma}$. Thus, $\rho$ acts on the set $\{a_g\}_g$ by left translation: $\rho_h(a_g)=a_{hg}$. Let $K_n\subset \Gamma$ be a
sequence of F{\o}lner sets and denote $b_n=|K_n|^{-1/2}\sum_{g\in K_n} a_g$. Then $b_n$ is also a semicircular element
and one has
$$
\|\rho_h(b_n)-b_n\|_2^2=|hK_n \Delta K_n|/|K_n| \rightarrow 0 \quad\text{for all}\;\; h\in \Gamma \; .
$$
Thus, the element $\tilde{b}=(b_n)_n \in (N^{*\Gamma})^\omega$ is semicircular with $\rho_h(\tilde{b})=\tilde{b}$ for all $h\in \Gamma$, showing that $\rho$ is not strongly ergodic.
\end{proof}

Note that the construction of $b_n$ in the proof of Lemma \ref{lem.two} can also be interpreted as follows. We apply the free Gaussian functor to the regular representation of $\Gamma$. Since $\Gamma$ is amenable, the regular representation contains a sequence of almost invariant unit vectors. This provides approximately $\Gamma$-invariant semicircular elements, i.e.\ the elements $b_n$.

\begin{lemma}\label{lem.three}
Let $\Gamma \curvearrowright^\sigma N$ be a free action of a countable amenable group $\Gamma$ on a II$_1$ factor $N$. Given any finite subsets $F_0 \subset \Gamma, X_0 \subset N$ and any $\delta > 0$, there exists a $d$-dimensional abelian von Neumann subalgebra $A_0\subset N$ with all minimal projections of trace $1/d$, such that
$$\|\sigma_g(a)-a\|_2\leq \delta \quad\text{and}\quad \|E_{A_0' \cap N}(x) - \tau(x)1\|_2 \leq \delta$$
for all $a\in (A_0)_1$, $g\in F_0$ and $x \in X_0$.
\end{lemma}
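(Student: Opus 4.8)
\emph{Strategy.} The genuine obstacle here is that $A_0$ must simultaneously be almost $\sigma$-invariant \emph{and} in sufficiently general position with respect to $X_0$ that $A_0'\cap N$ does not see $X_0$. My plan is not to build $A_0$ directly in $N$, but to first construct, inside the ultrapower $N^\om$, a $d$-dimensional abelian subalgebra $A = \lspan\{q_1,\dots,q_d\}$ with all $\tau(q_j) = 1/d$ which (i) is free from $N$ inside $N^\om$ and (ii) satisfies $\|\sigma^\om_g(a) - a\|_2 \le \delta$ for all $a \in (A)_1$ and $g \in F_0$; and then to descend to $N$ by a reindexation. The key observation is that, once $A$ is free from $N$, the relative-commutant estimate of the lemma is automatic.

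\emph{Why freeness handles the relative commutant.} I would take $u_0 = \sum_{j=1}^d \zeta^j q_j$, with $\zeta = e^{2\pi i/d}$, the canonical $d$-Haar unitary generating $A$, which satisfies $\tau(u_0^m) = 0$ whenever $d \nmid m$ because $\tau(q_j) = 1/d$. A Fourier computation gives, for every $x \in N^\om$,
$$E_{A'\cap N^\om}(x) \;=\; \sum_{j=1}^d q_j x q_j \;=\; \frac1d \sum_{k=0}^{d-1} u_0^k\, x\, u_0^{-k}\,,$$
so that $E_{A'\cap N^\om}(x) - \tau(x)1 = \tfrac1d\sum_{k=0}^{d-1} u_0^k y u_0^{-k}$ with $y = x - \tau(x)1$. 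When $x \in N$, freeness of $u_0$ from $N$ together with $\tau(u_0^m) = 0$ ($d\nmid m$) makes the $d$ elements $u_0^k y u_0^{-k}$, $0 \le k \le d-1$, pairwise orthogonal of $\|\cdot\|_2$-norm $\|y\|_2$, whence $\|E_{A'\cap N^\om}(x) - \tau(x)1\|_2 = d^{-1/2}\|x - \tau(x)1\|_2$. So I would fix $d$ at the outset so large that $d^{-1/2}\|x - \tau(x)1\|_2 \le \delta$ for all (finitely many) $x \in X_0$.

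\emph{Building $A$.} This is where Lemmas \ref{lem.one} and \ref{lem.two} enter. By Lemma \ref{lem.one} there is a globally $\sigma^\om$-invariant subalgebra $\widetilde N \subset N^\om$, free from $N$, on which $\sigma^\om$ is conjugate to the free Bernoulli $\Gamma$-action with base $N$. Inside $\widetilde N$ I would repeat the construction from the proof of Lemma \ref{lem.two}, but with a single F{\o}lner set: take a standard semicircular $a = a^* \in N$ with copies $a_g$ in position $g \in \Gamma$, choose a F{\o}lner set $K\subset\Gamma$ with $|gK\triangle K|/|K| < \eps$ for all $g \in F_0$ (with $\eps > 0$ fixed below), and set $c = |K|^{-1/2}\sum_{g\in K} a_g \in \widetilde N$. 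Then $c$ is again a standard semicircular element, it is free from $N$, and $\|\sigma^\om_g(c) - c\|_2^2 = |gK\triangle K|/|K| < \eps$ for $g \in F_0$. Let $q_1,\dots,q_d$ be the spectral projections of $c$ for a partition of $[-2,2]$ into $d$ arcs of equal semicircle mass, and put $A = \lspan\{q_1,\dots,q_d\} \subset \{c\}'' \subset \widetilde N$; then $A$ is free from $N$ and has minimal projections of trace $1/d$. Since the semicircle law has bounded density, comparing $\mathbf{1}_I(c)$ with $\mathbf{1}_I(\sigma^\om_g(c))$ --- splitting according to whether $c$ lies within $\eta$ of $\partial I$ or $|c - \sigma^\om_g(c)| > \eta$, and optimizing $\eta$ --- yields $\|\sigma^\om_g(q_j) - q_j\|_2 \le C\eps^{1/3}$ for a universal constant $C$, hence $\|\sigma^\om_g(a) - a\|_2 \le d\,C\eps^{1/3}$ for all $a \in (A)_1$, $g \in F_0$; choosing $\eps$ small enough (in terms of $d$ and $\delta$) makes this $\le \delta$.

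\emph{Descent, and where the difficulty lies.} Writing the minimal projections of $A$ as $q_j = (q_j^{(n)})_n$ with $q_j^{(n)} \in N$ honest orthogonal projections of trace $1/d$ summing to $1$, one has $\|\sigma_g(q_j^{(n)}) - q_j^{(n)}\|_2 \to \|\sigma^\om_g(q_j) - q_j\|_2$ and $\sum_j q_j^{(n)} x q_j^{(n)} \to E_{A'\cap N^\om}(x)$ along $\om$; hence, leaving a little room in the choices of $d$ and $\eps$, for every $n$ in a suitable set belonging to $\om$ the finite-dimensional abelian algebra $A_0 = \lspan\{q_j^{(n)}\} \subset N$ satisfies both estimates of the lemma, and any such $n$ finishes the argument. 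The hard part is really achieving near-$\sigma^\om$-invariance \emph{and} freeness from $N$ at the same time inside $N^\om$: a ``generic'' element free from $N$ is far from $\sigma^\om$-invariant, while a near-invariant element of $N^\om$ need not be free from $N$. This is exactly what Lemmas \ref{lem.one} and \ref{lem.two} deliver --- the former transplants an entire free Bernoulli $\Gamma$-system, free from $N$, into $N^\om$, and amenability of $\Gamma$ then supplies near-invariant (even semicircular) elements inside it. The only computational nuisance, the bounded-density estimate for spectral projections, can be avoided by working one level higher in $(N^\om)^\om$, where Lemma \ref{lem.two} directly produces an \emph{exactly} $\sigma$-invariant semicircular element free from $N$, at the cost of a double-ultrapower reindexation.
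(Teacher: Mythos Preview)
Your argument is correct and is essentially the paper's own proof, spelled out in more detail: embed a $\sigma^\omega$-invariant free Bernoulli $\Gamma$-system free from $N$ into $N^\omega$ via Lemma~\ref{lem.one}, extract a near-invariant semicircular element from it as in the proof of Lemma~\ref{lem.two}, take equal-mass spectral projections, and read off both estimates (the paper's ``since $P$ is freely independent to $N_0$'' step and your $d^{-1/2}$ orthogonality computation are the same observation). The one point you gloss over is the preliminary reduction, via \cite{P13}, to a separable $\sigma$-invariant subfactor $N_0 \supset X_0$ on which $\Gamma$ still acts freely---this is needed because Lemma~\ref{lem.one} is stated for separable $N$, but it is a routine patch.
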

\begin{proof}
By \cite{P13}, there exists a $\sigma$-invariant separable II$_1$ subfactor $N_0\subset N$ that contains $X_0$ and on which $\Gamma$ acts freely.
By Lemmas \ref{lem.one} and \ref{lem.two}, $N_0^\omega \subset N^\omega$ contains a $\sigma^\omega$-invariant II$_1$ subfactor $P\subset N^\omega$ that is freely independent to $N_0$ and on which $\rho={\sigma^\omega}|_P$ is free and non strongly ergodic.

Since $P$ is freely independent to $N_0$ and $X_0 \subset N_0$, we can fix $d \geq 1$ such that $\|E_{B_0' \cap N^\om}(x) - \tau(x)1\|_2 \leq \delta/2$ for every $x \in X_0$ and for every $d$-dimensional abelian von Neumann subalgebra $B_0\subset P$ with all minimal projections having trace $1/d$.

Since the action $\rho$ of $\Gamma$ on $P$ is not strongly ergodic, we can make such a choice of $B_0 \subset P$ such that $\|\sigma_g(b)-b\|_2\leq \delta/2$ for all $b \in (B_0)_1$ and $g \in F_0$. Writing $B_0$ as an ultraproduct of $d$-dimensional abelian von Neumann subalgebras $B_{0,n}\subset N$ with all minimal projections having trace $1/d$, the subalgebra $A_0 = B_{0,n}$ satisfies all required conditions when $n$ is large enough.
\end{proof}

In \cite{Popa18}, $\VC$ is defined as the class of countable groups $\Gamma$ with the property that for every free cocycle action $(\al,u)$ of $\Gamma$ on a II$_1$ factor $B$, the $2$-cocycle $u$ is a co-boundary. It is proved in \cite{Popa18} that the class $\VC$ is closed under amalgamated free products over finite subgroups, but is not closed under amalgamated free products over amenable subgroups. One similarly defines the class $\VC^\omega$ of countable groups for which every such $2$-cocycle $u$ is approximately a co-boundary. As a corollary to Theorem \ref{thm.amenable-1-cocycle-approx-vanish}, we prove that $\VC^\omega$ is closed under amalgamated free products over amenable subgroups.

\begin{corollary}
The class $\VC^\omega$ is closed under amalgamated free products over amenable subgroups. In particular, if $\Gamma_1, \Gamma_2$ are free products of amenable groups and $H\subset \Gamma_1, \Gamma_2$ is an amenable subgroup, then $\Gamma_1*_H \Gamma_2\in \VC^\omega$.
\end{corollary}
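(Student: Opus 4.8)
The plan is to follow the standard pattern for results of type ``$\VC$ (or $\VC^\omega$) is stable under amalgamated free products over subgroups in a suitable class.'' Suppose $\Gamma = \Gamma_1 *_H \Gamma_2$ with $H$ amenable and $\Gamma_1, \Gamma_2 \in \VC^\omega$. Let $(\al,u)$ be a free cocycle action of $\Gamma$ on a II$_1$ factor $B$; we must show $u$ is approximately a co-boundary, i.e.\ that $u$ is a genuine co-boundary for the ultrapower cocycle action on $B^\om$. First I would restrict $(\al,u)$ to $\Gamma_1$ and to $\Gamma_2$. Since $\Gamma_1,\Gamma_2 \in \VC^\omega$, there are unitaries $(w_g)_{g\in\Gamma_1}$ and $(w'_g)_{g\in\Gamma_2}$ in $B^\om$ untwisting $u|_{\Gamma_1}$, resp.\ $u|_{\Gamma_2}$. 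The obstruction to glueing these into a single family on $\Gamma$ is exactly their mismatch on the common subgroup $H$: the family $g \mapsto w_g^* w'_g$, $g \in H$, is a $1$-cocycle for the (genuine, since $u$ is untwisted there) action $\be_g = \Ad w_g \circ \al_g$ of $H$ on $B^\om$.

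The second and decisive step is to kill this $H$-cocycle. Here I would invoke Theorem \ref{thm.amenable-1-cocycle-approx-vanish}: since $H$ is amenable, by the equivalence (i)$\Leftrightarrow$(iii) (applied in the ultrapower, which is legitimate because $B^\om$ is a II$_1$ factor and ultrapowers of ultrapowers reindex), every $1$-cocycle for the $H$-action $\be|_H$ on $B^\om$ is approximately a co-boundary, hence a genuine co-boundary after a further re-indexation. So there is a unitary $z \in B^\om$ with $w_g^* w'_g = z^* \be_g(z)$ for all $g \in H$; absorbing $z$ into the definition of $w'$ (replacing $w'_g$ by $\be_g(z) (w'_g)^*{}^{-1}$... more precisely correcting $w'$ by the coboundary of $z$), we may assume $w_g = w'_g$ for all $g \in H$. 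Now the two untwisting families agree on $H$, and by the universal property of the amalgamated free product they assemble to a single map $\Gamma_1 * _H \Gamma_2 = \Gamma \to \cU(B^\om)$ which, because the cocycle identity $u(g,h) = \be$-coboundary data is local in the free-product generators, untwists $u$ on all of $\Gamma$. This proves $\Gamma \in \VC^\omega$.

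For the ``in particular'' clause: a free product $\ast_i H_i$ of amenable groups lies in $\VC^\omega$ because amenable groups themselves lie in $\VC^\omega$ (indeed in $\VC$, by \cite[Theorem 0.1]{Popa18}, or directly from the now-proved closure under free products over the trivial group), and then $\Gamma_1 *_H \Gamma_2$ with $\Gamma_1,\Gamma_2$ of this form and $H$ amenable is again in $\VC^\omega$ by the first part.

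The main obstacle I anticipate is verifying that the glued family genuinely untwists $u$ on all of $\Gamma$, not merely on $\Gamma_1$ and $\Gamma_2$ separately: one must check that the $2$-cocycle identity propagates from generators to arbitrary products, which requires care with the cocycle relation $\al_g(w_h^*) w_g^* w_{gh} = u(g,h)$ across syllables of a reduced word, and in particular that no new obstruction appears at junctions between $\Gamma_1$- and $\Gamma_2$-syllables. The clean way to handle this is to note that a family $(w_g)$ untwists $u$ if and only if $\be_g := \Ad w_g \circ \al_g$ is a genuine action, and that $\be$ is a genuine action of $\Gamma$ as soon as its restrictions to $\Gamma_1$ and $\Gamma_2$ are genuine actions agreeing on $H$ — which is precisely the universal property of the amalgam applied to the homomorphism $\Gamma \to \Aut(B^\om)$. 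One still needs the compatibility of the $w$'s (not just the $\be$'s), but this follows because $w$ is determined up to a $1$-cocycle for $\be$, and the relevant $1$-cocycle lives over $H$ where amenability again applies. A secondary technical point is the passage between ``approximately a co-boundary'' and ``a co-boundary for the ultrapower action'': this is the usual diagonal/re-indexation argument and should be invoked as in the proof of (iii)$\Rightarrow$(v) above.
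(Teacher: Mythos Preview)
Your proposal is correct and follows essentially the same approach as the paper, which simply invokes \cite[Proposition 1.5]{Popa18} for the amalgamation mechanics (untwist on each $\Gamma_i$, compare on $H$, kill the resulting $1$-cocycle, glue) and then appeals to Theorem \ref{thm.amenable-1-cocycle-approx-vanish} to supply the needed $1$-cocycle vanishing over the amenable subgroup $H$; you have in effect reconstructed that argument. The only point to tighten is that $\be|_H$ acts on $B^\omega$ rather than on $B$, so invoking (iii) followed by a further re-indexation lands you in $(B^\omega)^\omega$ --- cleaner is to run the whole argument with finite-set/$\eps$ approximations in $B$ (as in the proof of \cite[Proposition 1.5]{Popa18}) and pass to the ultrapower only at the end.
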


\begin{proof}
The same proof as the one of \cite[Proposition 1.5]{Popa18} shows that if $\Gamma_1, \Gamma_2\in \VC^\omega$ and $H\subset \Gamma_1, \Gamma_2$  is a subgroup with the property that any
1-cocycle for $\sigma^\omega$ is co-boundary, where $\sigma$ is an arbitrary free $H$-action on a II$_1$ factor, then $\Gamma_1*_H \Gamma_2$ lies
in $\VC^\omega$ as well. But by Theorem \ref{thm.amenable-1-cocycle-approx-vanish}, the latter property is satisfied by any amenable group $H$.
\end{proof}

\begin{remark}
Note that Lemmas \ref{lem.one} and \ref{lem.two} already show that if $\Gamma$ is a countable amenable group, then no free action of $\Gamma$ on a II$_1$ factor is strongly ergodic. This was shown for $N$ having property Gamma in \cite{B90}, where one actually proves that in this case, one has $(N \rtimes \Gamma)'\cap N^\om \neq \C1$, by using results in \cite{O85}.
One can also give an alternative, more direct (``by hand'') proof of the lack of strong ergodicity by constructing projections $p\in N$ of trace $1/2$ that are almost $\Gamma$-invariant by using the F{\o}lner condition and ``local Rokhlin towers'' for $\Gamma \curvearrowright N$ and a maximality argument.
\end{remark}

Recall that an action $\al$ of a countable group $G$ on a countable pmp equivalence relation $\cS$ on $(X,\mu)$ is called \emph{outer} if $(x,\al_g(x)) \not\in \cS$ for all $g \in \Gamma \setminus \{e\}$ and a.e.\ $x \in X$.

\begin{theorem}\label{thm.approx-vanish-Cartan}
Let $B$ be a II$_1$ factor with separable predual and Cartan subalgebra $A \subset B$. Let $G$ be a countable amenable group and $\al$ an action of $G$ by automorphisms of $A \subset B$ so that the action on the associated equivalence relation $\cS$ is outer.

Every $1$-cocycle $c : G \recht \cN_B(A)$ for $\al$ is approximately a co-boundary: there exists a sequence of $a_n \in \cN_B(A)$ such that
$$\lim_n \|c_g - a_n \al_g(a_n^*) \|_2 = 0 \quad\text{for all}\;\; g \in G \; .$$
\end{theorem}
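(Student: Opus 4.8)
\emph{Approach.} The plan is to run the Cartan-refined version of the chain $(i)\Rightarrow(iv)\Rightarrow(iii)$ from the proof of Theorem~\ref{thm.amenable-1-cocycle-approx-vanish}: carry out Connes's $2\times2$-matrix trick while keeping track of the Cartan subalgebra, and replace the free-Bernoulli input of Lemmas~\ref{lem.one}--\ref{lem.two} by a construction that respects the Cartan structure. Everywhere ``II$_1$ factor'' is to be replaced by ``Cartan inclusion of a II$_1$ factor''.

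\emph{Step 1: the $2\times2$ trick, relative to the Cartan.} Put $\tilde B = M_{2\times 2}(B)$ with matrix units $(e_{ij})$, and let $\tilde A = A\ot(\C e_{11}+\C e_{22}) \subset \tilde B$, a Cartan subalgebra with $e_{11},e_{22}\in\tilde A$; its equivalence relation is the $2$-amplification of $\cS$, still ergodic, and $\tilde\al_g=\al_g\ot\id$ is still outer on it. Since $c$ takes values in $\cN_B(A)$, the $1$-cocycle $\tilde c_g = e_{11}+c_g e_{22}$ for $\tilde\al$ takes values in $\cN_{\tilde B}(\tilde A)$, so $\si'_g:=\Ad(\tilde c_g)\circ\tilde\al_g$ is again an outer action of $G$ on $\tilde A\subset\tilde B$ (modifying an outer action by an element of the full pseudogroup keeps it outer on the relation). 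A direct computation, exactly as in $(iv)\Rightarrow(iii)$, shows that $e_{11},e_{22}$ are $(\si')^\om$-fixed, that a partial isometry $V=a^* e_{12}\in\tilde B^\om$ with $V^*V=e_{22}$, $VV^*=e_{11}$ lies in the pseudogroup of $\tilde A^\om\subset\tilde B^\om$ iff $a\in\cN_{B^\om}(A^\om)$, and that such a $V$ is $(\si')^\om$-fixed iff $c_g=a\,\al^\om_g(a^*)$ for all $g$. By the usual reindexation over the countable group $G$, exhibiting such a $V$ inside the fixed-point algebra $Q:=(\tilde B^\om)^{(\si')^\om}$ therefore yields a sequence $a_n\in\cN_B(A)$ with $\|c_g-a_n\al_g(a_n^*)\|_2\to 0$, which is the theorem.

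\emph{Step 2: the Cartan analogue of $(iv)$.} I would then prove: for an outer action $\al$ of a countable amenable group on a Cartan inclusion $A\subset B$ with $B$ a II$_1$ factor, the fixed-point algebra $(B^\om)^{\al^\om}$ is a II$_1$ factor, its intersection with $A^\om$ is a Cartan subalgebra, and $\cN_{B^\om}(A^\om)\cap(B^\om)^{\al^\om}$ generates $(B^\om)^{\al^\om}$. Applied to $(\tilde A\subset\tilde B,\si')$ this finishes Step~1: the factor $Q$ carries the ergodic Cartan subalgebra $\tilde A^\om\cap Q$, in which $e_{11}$ and $e_{22}$ are equal-trace projections, hence equivalent via a partial isometry of its pseudogroup; since that pseudogroup is generated by the $T_v$ with $v\in\cN_{\tilde B^\om}(\tilde A^\om)\cap Q$ and pseudogroups are closed under the relevant operations, this partial isometry is realized inside the pseudogroup of $\tilde A^\om\subset\tilde B^\om$ and is the desired $V$.

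\emph{Step 3 and the main obstacle.} The Cartan analogue of $(iv)$ is the main difficulty; I expect it to be proved by adapting $(i)\Rightarrow(iv)$, i.e.\ via a Cartan version of Lemma~\ref{lem.three}. Two ingredients must be replaced. First, ``no strongly ergodic free action on $B$'' is replaced by the same statement for the Cartan: since $G$ is amenable and acts freely in a pmp way on $(X,\mu)$, the algebra $A$ itself contains arbitrarily $\al$-invariant finite-dimensional abelian subalgebras (Rokhlin/F{\o}lner), and these play the role of the $A_n$ of Lemma~\ref{lem.three}. Second, the freely independent $\al^\om$-invariant subfactor built in Lemmas~\ref{lem.one}--\ref{lem.two} from the free Bernoulli action must be replaced by an $\al^\om$-invariant Cartan subinclusion of $B^\om$ that is freely independent (amalgamated over the common abelian algebra) from prescribed separable data, on which the action is outer on the equivalence relation and not strongly ergodic; the invariant separable Cartan subinclusion containing prescribed data is produced by the Cartan analogue of \cite{P13}, and the freely independent copy by a free-product-of-Cartan-inclusions construction, using the Connes--Feldman--Weiss uniqueness \cite{CFW81} of the amenable ergodic type~II$_1$ relation. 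The genuinely delicate point is to perform all of this \emph{inside $\cN_B(A)$}: every almost-invariant object must normalize the Cartan, so that the partial isometry untwisting $c$ is obtained in $\cN_B(A)$ and not merely in $\cU(B)$; this bookkeeping, rather than any single deep estimate, is where I expect the real work to lie.
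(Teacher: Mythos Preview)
Your approach is genuinely different from the paper's and substantially more elaborate. The paper does \emph{not} run a Cartan-adapted version of the ultrapower chain $(i)\Rightarrow(iv)\Rightarrow(iii)$; it gives a short, direct construction using the Ornstein--Weiss Rokhlin lemma. Since both $\al$ and $\be_g := (\Ad c_g)\circ\al_g$ act freely on $A = L^\infty(X,\mu)$, one builds matching Rokhlin towers $\{\al_h(p_i)\}_{h\in T_i}$ and $\{\be_h(q_i)\}_{h\in T_i}$ with $p_i,q_i\in A$, $\tau(p_i)=\tau(q_i)$, and the $T_i$ sufficiently F{\o}lner. Ergodicity of $\cS$ provides $v_i\in\cN_B(A)$ with $v_i^*v_i=p_i$, $v_iv_i^*=q_i$, and then
\[
a = w + \sum_i \sum_{h\in T_i} c_h\,\al_h(v_i)
\]
(with $w$ a remainder patching the leftover projections) lies in $\cN_B(A)$ by construction, and a two-line computation gives $\|c_g - a\,\al_g(a^*)\|_2^2 \le 8\eps$ for $g\in F$. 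No ultrapowers, no free independence, no structure theory of fixed-point algebras.

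Your route has a real gap at Step~2. The assertion that $A^\om\cap Q$ is Cartan in $Q=(B^\om)^{\al^\om}$, with $\cN_{B^\om}(A^\om)\cap Q$ generating $Q$, is not established and is not a routine adaptation. The ingredients you propose for a Cartan Lemma~\ref{lem.three} are problematic: Lemmas~\ref{lem.one}--\ref{lem.two} rest on the free-independence results of \cite{P92,P13}, which produce a Haar unitary in $N^\om$ free from a prescribed separable subalgebra; there is no off-the-shelf analogue producing such an element \emph{inside} $\cN_{B^\om}(A^\om)$, let alone one giving an amalgamated-over-$A$ free copy on which the action remains outer and non strongly ergodic. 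The free Bernoulli argument of Lemma~\ref{lem.two} manufactures semicircular elements, which have no reason to normalize anything, and your replacement by ``free-product-of-Cartan-inclusions using CFW'' is a heading, not a construction. So what you call bookkeeping is in fact a demand for new structural theorems about ultrapowers of Cartan pairs --- work the paper's Rokhlin-tower argument renders unnecessary.
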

\begin{proof}
Fix a finite subset $F \subset G$ and $\eps > 0$. We will construct an element $a \in \cN_B(A)$ such that $\|c_g - a \al_g(a^*)\|_2^2 \leq 8 \eps$ for every $g \in F$.

Denote $\beta_g = (\Ad c_g) \circ \al_g$. Since the action $\al$ of $G$ on $\cS$ is outer, both $\al_g$ and $\be_g$ act freely on $A$. By the version of the Ornstein-Weiss Rokhlin theorem proved in \cite[Theorem 4.46]{KL16}, we can take $n \in \N$ and, for all $i \in \{1,\ldots,n\}$, a finite subset $T_i \subset G$ and projections $p_i,q_i \in A$ such that the following conditions hold.
\begin{itemlist}
\item The projections $\{\al_h(p_i) \mid i = 1,\ldots,n , h \in T_i \}$ are orthogonal and their sum $p$ has trace at least $1-\eps$.
\item The projections $\{\be_h(q_i) \mid i = 1,\ldots,n , h \in T_i \}$ are orthogonal and their sum $q$ has trace at least $1-\eps$.
\item For every $i \in \{1,\ldots,n\}$, we have that $\tau(p_i) = \tau(q_i)$.
\item For every $g \in F$ and $i \in \{1,\ldots,n\}$, we have that $|g T_i \cap T_i| \geq (1-\eps)|T_i|$.
\end{itemlist}
Since the equivalence relation $\cS$ is ergodic, we can choose $v_i \in B$ such that $v_i^* v_i = p_i$, $v_i v_i^* = q_i$ and $v_i A v_i^* = A q_i$, for all $i \in \{1,\ldots,n\}$. We can also choose $w \in B$ such that $w^* w = 1-p$, $ww^* = 1-q$ and $w A w^* = A(1-q)$. Define
$$a = w + \sum_{i=1}^n \sum_{h \in T_i} c_h \, \al_h(v_i) \; .$$
Note that $a$ is a sum of partial isometries normalizing $A$ and having orthogonal initial, resp.\ final projections. It then follows that $a \in \cN_B(A)$.

Fix $g \in F$ and define the projection $f \in A$ given by
$$f = \sum_{i=1}^n \sum_{h \in gT_i \cap T_i} \al_h(p_i) \; .$$
One checks that $c_g \al_g(a) f = a f$. It then follows that
$$\|c_g \al_g(a) - a\|_2^2 = \|(c_g \al_g(a) - a)(1-f)\|_2^2 \leq 4 \tau(1-f) \; .$$
We also have that
$$\tau(f) = \sum_{i=1}^n |g T_i \cap T_i| \, \tau(p_i) \geq (1-\eps) \sum_{i=1}^n |T_i| \, \tau(p_i) = (1-\eps) \, \tau(p) \geq 1-2\eps \; .$$
So, we have proven that $\|c_g - a \al_g(a^*)\|_2^2 \leq 8 \eps$ for all $g \in F$.
\end{proof}

\section{Regular inclusions containing a Cartan subalgebra}

Let $(M,\tau)$ be a tracial von Neumann algebra with separable predual and Cartan subalgebra $A \subset M$. By \cite{FM75}, we can write $A = L^\infty(X)$ and $M = L(\cR,u)$ where $\cR$ is a countable pmp equivalence relation on the standard probability space $(X,\mu)$ and $u$ is a $2$-cocycle on $\cR$ with values in the circle $\T$. There is a bijective correspondence between intermediate von Neumann algebras $A \subset B \subset M$ and subequivalence relations $\cS \subset \cR$ given by $B = L(\cS,u)$. Note that automatically, $B'\cap M \subset A' \cap M = A \subset B$, so that $B' \cap M = \cZ(B)$.

Recall that an action $\al$ of a countable group $\Gamma$ on a countable pmp equivalence relation $\cS$ on $(X,\mu)$ is called \emph{outer} if for all $g \in \Gamma \setminus \{e\}$, we have that $(x,\al_g(x)) \not\in \cS$ for a.e.\ $x \in X$.

We start by sketching the proof of the following folklore result.

\begin{proposition}\label{prop.crossed-product-equiv-rel}
In the setting introduced above, we have that the inclusion $B \subset M$ is regular if and only if the subequivalence relation $\cS \subset \cR$ is strongly normal in the sense of \cite[Definition 2.14]{FSZ88}.

In that case, writing $A \subset B$ as the direct integral of the Cartan inclusions $(A_y \subset B_y)_{y \in Y}$ with the $B_y$ being factors, we can write $M$ as the cocycle crossed product of a pmp discrete measured groupoid $\cG$ and a free cocycle action $(\al,u)$ on this field $(A_y \subset B_y)_{y \in Y}$. This means that $\cG^{(0)} = Y$ and that we are given
\begin{enumlist}
\item a measurable field of $*$-isomorphisms $\cG \ni g \mapsto \al_g : B_{s(g)} \recht B_{t(g)}$ satisfying $\al_g(A_{s(g)}) = A_{t(g)}$,
\item a measurable field of unitaries $\cG^{(2)} \ni (g,h) \mapsto u(g,h) \in \cN_{B_{t(g)}}(A_{t(g)})$,
\end{enumlist}
satisfying the conditions in Definition \ref{def.cocycle-action} and such that for every $y \in Y$, the action of the isotropy group $\Gamma_y = \{g \in \cG \mid s(g) = y = t(g)\}$ on the equivalence relation $\cS_y$ associated with $A_y \subset B_y$ is outer.
\end{proposition}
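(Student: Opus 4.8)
```latex
The plan is to prove the two equivalences ``$B \subset M$ regular $\iff$ $\cS \subset \cR$ strongly normal'' and then extract the cocycle crossed product description from the general correspondence in Section~\ref{sec.groupoids} applied to the regular inclusion $A \subset B \subset M$.

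\textbf{Step 1: unpacking the normalizer.} Since $A = L^\infty(X)$ is Cartan in $M = L(\cR,u)$, the normalizer $\cN_M(A)$ is spanned by the partial isometries $u_\vphi$ (for $\vphi$ in the full pseudogroup $[[\cR]]$) together with unitaries in $A$; these are the ``combinatorial'' generators of $M$. A unitary $w = \sum_\vphi a_\vphi u_\vphi$ in $M$ normalizes $B = L(\cS,u)$ precisely when conjugation by each $u_\vphi$ that appears carries $\cS$ to $\cS$ in the appropriate measurable/partial sense. First I would show that $B \subset M$ is regular $\iff$ $\cN_M(A \subset B) := \cN_M(A) \cap \cN_M(B)$ generates $M$: one inclusion is trivial, and for the other, starting from a normalizer $w$ of $B$ one uses that $A$ and $w A w^*$ are both Cartan in $B$, hence conjugate by a unitary in $B$ by uniqueness of Cartan subalgebras \emph{relative to $B$} is too strong, so instead one decomposes $w$ along the Cartan $A$ of $M$: write $w = \sum_\vphi a_\vphi u_\vphi$, and observe that since $w B w^* = B$ one can, after cutting by the finitely many relevant central/diagonal projections, replace each $u_\vphi$ by a genuine element of $\cN_M(A \subset B)$ absorbing the ``error''. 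This is exactly the mechanism by which $\cN_M(A)$ being inside the von Neumann algebra generated by $\cN_M(A \subset B)$ gets bootstrapped.

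\textbf{Step 2: translating to $\cS \subset \cR$.} An element $u_\vphi$ of $[[\cR]]$ lies in $\cN_M(B)$ iff $\vphi$ conjugates $\cS$ into $\cS$, i.e.\ $(\vphi(x),\vphi(y)) \in \cS \iff (x,y) \in \cS$ for a.e.\ pair in the domain; the collection of such $\vphi$ is precisely the full pseudogroup $[[\cR]]_{\cS}$ of ``$\cS$-normalizing'' partial isomorphisms. Thus $\cN_M(A\subset B)$ generates $M$ iff these $\vphi$ generate all of $[[\cR]]$, which by definition (\cite[Definition 2.14]{FSZ88}, in the version without $2$-cocycle twisting) is exactly strong normality of $\cS \subset \cR$; the $2$-cocycle $u$ on $\cR$ plays no role here because multiplying a normalizer by an element of $A$ does not affect whether it normalizes $B$. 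Combining with Step~1 gives the first assertion.

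\textbf{Step 3: the cocycle crossed product picture.} Given regularity, $A \subset B \subset M$ with $A$ Cartan in $M$ is in particular a regular inclusion $B \subset M$ with $B' \cap M = \cZ(B)$ (noted already: $B' \cap M \subset A' \cap M = A \subset B$). Apply the construction recalled in Section~\ref{sec.groupoids}: writing $B = \int_Y^\oplus B_y\, d\mu(y)$ as a direct integral of factors, one obtains the canonical pmp discrete measured groupoid $\cG = \cG_{B \subset M}$ with $\cG^{(0)} = Y$ and a free cocycle action $(\al,u)$ of $\cG$ on $(B_y)_{y\in Y}$, with $B \subset M$ the cocycle crossed product. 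What remains is to check the extra Cartan-compatibility: (a) the partial isometries implementing the groupoid can be chosen inside $\cN_M(A)$ -- this follows from regularity of $A \subset B$ in $M$ established in Step~1, which guarantees enough $A$-normalizers to build the inverse semigroup $\cP/\!\!\sim$; (b) consequently $\al_g(A_{s(g)}) = A_{t(g)}$ and the $2$-cocycle values $u(g,h)$ lie in $\cN_{B_{t(g)}}(A_{t(g)})$; and (c) the isotropy action $\Gamma_y \curvearrowright \cS_y$ is outer because the cocycle action is free, i.e.\ for $g \in \Gamma_y$, $g \neq e$, the automorphism $\al_g$ of $B_y$ is free, and a free automorphism of $B_y$ normalizing the Cartan $A_y$ cannot fix any point of $\cS_y$ -- if $(x,\al_g(x)) \in \cS_y$ on a positive measure set one produces a nonzero partial isometry in $B_y$ intertwining $\id$ and $\al_g$, contradicting freeness.

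\textbf{Main obstacle.} The genuinely delicate point is Step~1, the bootstrap ``$\cN_M(B)$ generates $M$ $\Rightarrow$ $\cN_M(A \subset B)$ generates $M$'': one must show that a $B$-normalizer can be ``straightened'' to also normalize $A$ without shrinking the generated algebra. The clean way is the pseudogroup/inverse-semigroup reformulation -- phrase everything in terms of $[[\cR]]$ from the start, where normalizing $B$ becomes a purely measure-theoretic condition on partial isomorphisms and the ``straightening'' is automatic because $[[\cR]]$ is already built from $A$-normalizers. This is why the statement is called folklore and only sketched: the content is entirely in recognizing that $A$ Cartan in $M$ lets one work combinatorially inside $[[\cR]]$.
```
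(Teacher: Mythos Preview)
Your Step~1 is the weak point, and in fact you dismissed the correct idea. You wrote that ``$A$ and $wAw^*$ are both Cartan in $B$, hence conjugate by a unitary in $B$ by uniqueness of Cartan subalgebras \emph{relative to $B$} is too strong'' --- and then switched to a vague Fourier-decomposition argument that does not work as stated (the expansion $w = \sum_\vphi a_\vphi u_\vphi$ is an $L^2$-sum, the individual $u_\vphi$ need not normalize $B$, and there is no evident way to ``absorb the error'' without already knowing what you are trying to prove). The paper does exactly the thing you rejected: it shows that for each $u \in \cN_M(B)$ the two Cartan subalgebras $A$ and $\beta(A) = uAu^*$ of $B$ are unitarily conjugate in $B$, so that some $bu$ with $b \in \cU(B)$ lies in $\cN_M(A)$.

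The point you missed is that one does not need a general Cartan-uniqueness theorem for $B$; one proves this particular conjugacy using the ambient $M$. Here is the mechanism. Fix $u \in \cN_M(B)$ and write $\beta = \Ad u|_B$. For any $v \in \cN_M(A)$ with $\alpha = \Ad v|_A$, set $b = E_B(vu^*)$. Since $va = \alpha(a)v$ and $u^*\alpha(a) = \beta^{-1}(\alpha(a))u^*$ for $a \in A$, and since $E_B$ is a $B$-bimodule map, one gets $b\,\beta(a) = \alpha(a)\,b$ for all $a \in A$; hence the central support $z_v \in \cZ(B)$ of $b^*b$ satisfies $\beta(A)z_v \cong A z_v$ by a unitary in $Bz_v$. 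Now let $v$ range over $\cN_M(A)$: regularity of $A$ in $M$ forces $\bigvee_v z_v = 1$ (otherwise $E_B(vu^*)(1-z) = 0$ for all such $v$ with a fixed nonzero $z$, contradicting that $\cN_M(A)$ spans $M$). So $\beta(A)$ and $A$ are unitarily conjugate in $B$, and $bu \in \cN_M(A)$ for suitable $b \in \cU(B)$. This is precisely the ``straightening'' you were looking for, and it needs no decomposition of $u$ along $[[\cR]]$.

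Your Steps~2 and~3 are fine in outline and match the paper's treatment; in particular your argument for outerness in~(c) is correct. But without the concrete intertwining trick above, Step~1 as written is a gap rather than a sketch.
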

\begin{proof}
By definition, strong normality of $\cS \subset \cR$ means that $\cR$ is generated by the graphs of the elements of $\cF = \{\vphi \in [\cR] \mid \vphi \in \Aut(\cS)\}$. Whenever $\vphi \in \cF$, the corresponding unitary element $u_\vphi \in M = L(\cR,u)$ normalizes $B = L(\cS,u)$. Therefore, strong normality of $\cS \subset \cR$ implies the regularity of $B \subset M$.

Conversely, assume that $B \subset M$ is regular. Since $B' \cap M \subset A' \cap M = A \subset B$, we get that $B' \cap M = \cZ(B)$. So, as in Section \ref{sec.groupoids}, we can write $M$ as a cocycle crossed product of a pmp discrete measured groupoid $\cG$ by a free cocycle action on $B$. To conclude the proof of the proposition, we have to prove that this cocycle action can be chosen in such a way that it globally preserves $A \subset B$. For this, it suffices to prove that for every $u \in \cN_M(B)$, there exists a unitary $b \in \cU(B)$ such that $b u \in \cN_M(A)$. Fix $u \in \cN_M(B)$ and denote by $\be \in \Aut(B)$, the automorphism given by $\be(x) = u x u^*$ for all $x \in B$. Fix any $v \in \cN_M(A)$ and denote by $\al \in \Aut(A)$, the automorphism given by $\al(a) = v a v^*$ for all $a \in A$. Define $b = E_B(v u^*)$. It follows that $b \beta(a) = \al(a) b$ for all $a \in A$. Denoting by $z_v \in \cZ(B)$ the central support of the element $b^*b \in B$, we conclude that the Cartan subalgebras $\beta(A) z_v$ and $A z_v$ of $B z_v$ are unitarily conjugate. Since $A \subset M$ is regular, varying $v \in \cN_M(A)$, the join of all these central projections $z_v \in \cZ(B)$ is $1$. Therefore $\beta(A)$ and $A$ are unitarily conjugate. So we can choose $b \in \cU(B)$ such that $b \beta(A) b^* = A$ and thus $bu \in \cN_M(A)$.
\end{proof}

Ignoring the scalar cocycles in Proposition \ref{prop.crossed-product-equiv-rel}, we can also write the equivalence relation $\cR$ as the semidirect product of the subequivalence relation $\cS$ and an outer cocycle action of the groupoid $\cG$ by automorphisms of $\cS$.

The main goal of this section is to prove the following analogues of Theorems \ref{thm.two-cocycle-vanishing} and \ref{thm.classification-amenable}. These results were so far only known for amenable groups (see \cite[Theorem 3.4]{FSZ88}, \cite[Theorem 3.4]{BG84} and \cite{GS86}).

\begin{theorem}\label{thm.two-cocycle-vanishing-Cartan}
Let $\cG$ be a discrete measured groupoid with $Y = \cG^{(0)}$ and $(A_y \subset B_y)_{y \in Y}$ a measurable field of Cartan subalgebras in II$_1$ factors with separable predual. Assume that $\cG$ is amenable.

When $(\al,u)$ is a free cocycle action of $\cG$ on $(A_y \subset B_y)_{y \in Y}$, the cocycle $u$ is a co-boundary: there exists a measurable field of unitaries $\cG \ni g \mapsto w_g \in \cN_{B_{t(g)}}(A_{t(g)})$ such that
$$u(g,h) = \al_g(w_h^*) \, w_g^* \, w_{gh} \quad\text{for all $(g,h) \in \cG^{(2)}$.}$$
\end{theorem}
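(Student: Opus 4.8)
The plan is to mirror the proof of Theorem~\ref{thm.two-cocycle-vanishing}, replacing the two inputs that were specific to the factor case by their Cartan-relative analogues that we have now assembled. Concretely, form the cocycle crossed product $B \subset M$ and set $D = \cZ(B)' \cap M$, so that $D$ is the direct integral of the cocycle crossed products $B_y \rtimes \Gamma_y$ of the isotropy groups $\Gamma_y$. Each $\Gamma_y$ is countable amenable and acts freely on the Cartan inclusion $A_y \subset B_y$ with outer action on the equivalence relation $\cS_y$. The first input we need is: \emph{an outer cocycle action of a countable amenable group on a Cartan inclusion in a II$_1$ factor can be perturbed, within the normalizer of the Cartan subalgebra, to a genuine action.} This is the Cartan-relative analogue of \cite[Theorem~0.1]{Popa18}; it is the Feldman--Sutherland--Zimmer vanishing-$2$-cohomology theorem \cite[Theorem~3.4]{FSZ88} (applied to each $\Gamma_y$, uniformly measurably in $y$). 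After this perturbation, $u(g,h)=1$ whenever $s(g)=t(g)$ and $D_y = B_y \rtimes \Gamma_y$ is a genuine crossed product whose canonical unitaries $u(y,g)$ lie in $\cN_{B_y}(A_y)$.

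Next, exactly as before, use amenability of $\cG$ to split $X$ (i.e.\ $Y$) into an $\cR$-invariant piece with finite orbits and an $\cR$-invariant piece on which $\cR$ is a free $\Z$-action, yielding a measurable morphism lift $q : \cR \recht \cG$ and hence a presentation of $\cG$ as the semidirect product of the field $(\Gamma_y)$ with the isomorphisms $\delta_{(x,y)} = \Ad q(x,y)$. Again using the FSZ vanishing applied to $\cR$ (or rather to the groupoid obtained by composing with $q$), perturb so that composition with $q$ is a genuine action $\theta_{(x,y)}$ of $\cR$ on the field $(A_y \subset B_y)$ that restricts to $\al_{q(x,y)}$ on $B_y$, carries $A_y$ onto $A_x$, and sends $u(y,g)$ into $\cN_{B_x}(A_x)\, u(x,\delta_{(x,y)}(g))$. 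What now remains is to find a measurable field of $\cN_{B_x}(A_x)$-valued unitaries $v(x,y)$ that is a $1$-cocycle for $\theta$, together with normalizer-valued $1$-cocycles $w(y,\cdot)$ for $\Gamma_y \actson B_y$, such that $\Ad(v(x,y))\circ\theta_{(x,y)}$ carries $w(y,g)u(y,g)$ to $w(x,\delta_{(x,y)}(g))u(x,\delta_{(x,y)}(g))$; once found, the unitaries $w(x,g)u(x,g)$ implement a genuine groupoid action of $\cG$ on $B$ that preserves $A$, and reading off $w_g$ from $q$ and the $w(x,\cdot)$ gives the claimed coboundary with values in the normalizer.

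To produce these data, set $\cP_y = \cN_{D_y}(B_y) = \{\, b\,u(y,g) \mid b \in \cN_{B_y}(A_y),\ g \in \Gamma_y\,\}$ --- note we must take the normalizer of $B_y$ \emph{inside the Cartan-normalizer of $A_y$ in $D_y$}, so that the natural surjection onto $\Gamma_y$ has kernel $\cN_{B_y}(A_y)$ rather than all of $\cU(B_y)$. Let $P_y$ be the Polish space of homomorphic lifts $\Gamma_y \recht \cP_y$, identified with the space of $\cN_{B_y}(A_y)$-valued $1$-cocycles for $\Gamma_y \actson B_y$, carrying the conjugation action of the Polish group $\cN_{B_y}(A_y)$ (with its natural Polish topology as a closed subgroup of $\cU(B_y)$). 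The maps $\theta_{(x,y)}$ and $\be_{(x,y)}(\gamma) = \theta_{(x,y)}\circ\gamma\circ\delta_{(x,y)}^{-1}$ furnish a measurable field of conjugacies between the continuous Polish-group actions $\cN_{B_x}(A_x) \actson P_x$. By Theorem~\ref{thm.approx-vanish-Cartan}, every $\cN_{B_y}(A_y)$-valued $1$-cocycle for $\Gamma_y \actson B_y$ is approximately a coboundary, so these actions have dense orbits; since $\cR$ is amenable, Theorem~\ref{thm.equivariant-section} then yields the required section $x \mapsto \pi(x) \in P_x$ and the $1$-cocycle $(x,y)\mapsto v(x,y) \in \cN_{B_x}(A_x)$ with $v(x,y)\cdot\be_{(x,y)}(\pi(y)) = \pi(x)$. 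Writing $\pi(x)(g) = w(x,g)\,u(x,g)$ finishes the proof.

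The main obstacle is the first step: producing, for each amenable $\Gamma_y$, a perturbation to a genuine action \emph{within} $\cN_{B_y}(A_y)$, measurably in $y$, i.e.\ the Cartan-relative replacement of \cite[Theorem~0.1]{Popa18}. One should check that \cite[Theorem~3.4]{FSZ88} applies to cocycle actions (not just genuine actions) of amenable groups on arbitrary Cartan inclusions in II$_1$ factors and that the construction is measurable in parameters in the sense of \cite{Sutherland85}; the factoriality of $B_y$ is what guarantees freeness is equivalent to outerness of the action on $\cS_y$ and keeps the relevant relative commutants trivial, so the FSZ machinery and the re-indexation/measurable-selection arguments go through uniformly. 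A secondary point requiring care is verifying that $\cN_{B_y}(A_y)$, with its Polish topology, is a closed subgroup of $\cU(B_y)$ on which conjugation by $\cN_{B_y}(A_y)$ is continuous and that $P_y$ is genuinely Polish --- routine once one recalls that $\cN_{B_y}(A_y)$ generates $B_y$ and the $\|\cdot\|_2$-topology makes it Polish.
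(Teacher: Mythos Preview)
Your proposal is correct and follows the paper's approach essentially verbatim: the paper's proof also mirrors Theorem~\ref{thm.two-cocycle-vanishing}, substituting the Cartan-relative $2$-cocycle vanishing for the isotropy groups and Theorem~\ref{thm.approx-vanish-Cartan} for the approximate $1$-cocycle vanishing, and then invoking Theorem~\ref{thm.equivariant-section}. The one point where the paper is sharper than your sketch is precisely the step you flag as the main obstacle: \cite[Theorem~3.4]{FSZ88} by itself only trivializes the $2$-cocycle at the level of the equivalence relation $\cS_y$, which reduces $u(g,h)$ to lie in $\cU(A_y)$; one then views this residual $A_y$-valued $2$-cocycle as a scalar $2$-cocycle on the orbit equivalence relation of the free action $\Gamma_y \actson (X_y,\mu_y)$, and kills it because that relation is hyperfinite (since $\Gamma_y$ is amenable).
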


Applying Theorem \ref{thm.two-cocycle-vanishing-Cartan} in a setup without scalar cocycles, we obtain the following result: if $\cR$ is a countable pmp equivalence relation with strongly normal subequivalence relation $\cS \subset \cR$ and if the quotient groupoid $\cG$ is amenable, we can write $\cR$ as the semidirect product of $\cS$ by a free action of $\cG$ on $\cS$.

\begin{proof}[Proof of Theorem \ref{thm.two-cocycle-vanishing-Cartan}]
The proof follows exactly the same lines as the proof of Theorem \ref{thm.two-cocycle-vanishing}. We need the following two ingredients.
\begin{itemlist}
\item Vanishing of $2$-cocycles for actions of amenable groups: if $B$ is a separable II$_1$ factor with Cartan subalgebra $A \subset B$ and associated ergodic type II$_1$ equivalence relation $\cS$ and if $(\al,u)$ is a cocycle action of an amenable group $\Gamma$ on $A \subset B$ such that the corresponding action on $\cS$ is outer, then the $2$-cocycle $u$ is a co-boundary. Theorem 3.4 in \cite{FSZ88} provides $2$-cocycle vanishing for the corresponding outer cocycle action on the equivalence relation $\cS$. So, we may perturb $(\al,u)$ such that $u(g,h) \in \cU(A)$ for all $g,h \in \Gamma$. Writing $A = L^\infty(X,\mu)$, we now have that $\Gamma \actson (X,\mu)$ is a free action and we can view $u$ as a scalar $2$-cocycle on the associated orbit equivalence relation. Since this equivalence relation is hyperfinite, the scalar $2$-cocycle $u$ is a co-boundary.

\item Approximate vanishing of $1$-cocycles for actions of amenable groups: this is Theorem \ref{thm.approx-vanish-Cartan} above.
\end{itemlist}
We then follow the same method as in the proof of Theorem \ref{thm.two-cocycle-vanishing}. Applying Theorem \ref{thm.equivariant-section}, the result follows.
\end{proof}

Below, we then prove the following classification result.

\begin{theorem}\label{thm.classification-amenable-Cartan}
Let $R$ be the hyperfinite II$_1$ factor with Cartan subalgebra $A \subset R$. The intermediate subalgebras $A \subset B \subset R$ such that $B \subset R$ is regular are completely classified by the type of $B$ and the associated discrete measured groupoid.

More precisely, given for $i=1,2$, von Neumann subalgebras $A_i \subset B_i \subset R$ with $A_i \subset R$ Cartan and $B_i \subset R$ regular, there exists an automorphism $\theta \in \Aut(R)$ satisfying $\theta(B_1) = B_2$ and $\theta(A_1) = A_2$ if and only if $B_1$ and $B_2$ have the same type and $\cG_1 \cong \cG_2$.
\end{theorem}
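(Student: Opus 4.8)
The plan is to deduce Theorem \ref{thm.classification-amenable-Cartan} from Theorem \ref{thm.two-cocycle-vanishing-Cartan} together with a relative (Cartan-preserving) version of the Sutherland--Takesaki cocycle conjugacy theorem \cite{Suth-Tak84}, in the same way that Theorem \ref{thm.classification-amenable} was deduced from Theorem \ref{thm.two-cocycle-vanishing}. First, the \emph{only if} direction: if $\theta \in \Aut(R)$ satisfies $\theta(B_1) = B_2$ and $\theta(A_1) = A_2$, then $\theta$ carries the normalizer $\cN_R(A_1 \subset B_1)$ onto $\cN_R(A_2 \subset B_2)$, hence induces a measure class preserving isomorphism of the associated discrete measured groupoids $\cG_1 \cong \cG_2$ (the groupoid of the inclusion $B_i \subset R$, which by Proposition \ref{prop.crossed-product-equiv-rel} comes equipped with its field of Cartan inclusions), and clearly $B_1$ and $B_2$ have the same type. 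This direction is routine.

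For the \emph{if} direction, by Proposition \ref{prop.crossed-product-equiv-rel} each inclusion $A_i \subset B_i \subset R$ is isomorphic to the cocycle crossed product of a free cocycle action $(\al_i, u_i)$ of an amenable pmp discrete measured groupoid $\cG_i$ on the field of Cartan inclusions $(A_{i,y} \subset B_{i,y})_{y \in \cG_i^{(0)}}$, with outer isotropy actions. By Theorem \ref{thm.two-cocycle-vanishing-Cartan}, after a perturbation inside $\cN_{B_i}(A_i)$ we may assume $u_i \equiv 1$, so that $R$ is the genuine crossed product $B_i \rtimes_{\al_i} \cG_i$ with $\al_i$ leaving $A_i$ globally invariant. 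Fixing an isomorphism $\cG_1 \cong \cG_2$, we are reduced to proving that any two free \emph{genuine} actions $\al, \be$ of the same amenable groupoid $\cG$ on the same field of Cartan inclusions $(A_y \subset B_y)$, with $B_y \cong R$ and outer isotropy, are cocycle conjugate \emph{through automorphisms preserving the Cartan subalgebra}, i.e.\ there is a field of Cartan-preserving isomorphisms $\theta_y : B_y \to B_y$ (with $\theta_y(A_y) = A_y$) and a field of unitaries $w_g \in \cN_{B_{t(g)}}(A_{t(g)})$ implementing the cocycle conjugacy. This is the relative analogue of \cite[Theorem 1.2]{Suth-Tak84}. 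If this holds, the resulting isomorphism of crossed products sends $A_1 \subset B_1 \subset R$ onto $A_2 \subset B_2 \subset R$, finishing the proof; since $R$ is a factor iff $\cG$ is ergodic and is the hyperfinite II$_1$ factor automatically (being an amenable tracial crossed product), one only needs the type of $B$ (I$_n$ or II$_1$) as the remaining invariant, the type I case being handled exactly as in Corollary \ref{cor.always-exists-Cartan}.

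The main obstacle is establishing this relative cocycle conjugacy of genuine actions. Two strategies are available: either invoke a groupoid version of the Cartan-preserving uniqueness theorem for amenable group actions on $R$ --- which in the group case goes back to \cite{BG84} and \cite{GS86} (cited in the excerpt as precisely the amenable-group versions of these theorems) --- and upgrade it to the groupoid setting by the same equivariant-gluing technique used to prove Theorem \ref{thm.two-cocycle-vanishing-Cartan}, namely combining an ``approximate uniqueness'' statement (approximate vanishing of $1$-cocycles in $\cN_B(A)$, which is exactly Theorem \ref{thm.approx-vanish-Cartan}) with the measurable equivariant selection Theorem \ref{thm.equivariant-section}; or, alternatively, decompose the problem via the intermediate algebra: use the (non-relative) Sutherland--Takesaki theorem \cite{Suth-Tak84} to first conjugate the inclusions $B_1 \subset R$ and $B_2 \subset R$, so WLOG $B_1 = B_2 = B$ and $R$ is a single crossed product, and then use the uniqueness of Cartan subalgebras \emph{inside $B$ compatible with the $\cG$-action} --- which is precisely what the second part of Theorem \ref{thm-B.classification} (Theorem \ref{thm.classification-amenable-Cartan} should be proved alongside, or should invoke) asserts. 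I expect the cleanest route is the first one: run the proof of Theorem \ref{thm.two-cocycle-vanishing-Cartan} verbatim but with ``vanishing of $2$-cocycles'' replaced by ``uniqueness of free genuine actions of amenable groups on $A \subset B$'' (from \cite{BG84,GS86}) and with ``approximate vanishing of $1$-cocycles'' again supplied by Theorem \ref{thm.approx-vanish-Cartan}, so that Theorem \ref{thm.equivariant-section} glues the per-isotropy-group conjugacies into a genuine $\cG$-equivariant one. The technical care needed is in the measurability of all the choices and in checking that the ``approximate uniqueness'' hypothesis of Theorem \ref{thm.equivariant-section} is met by the relative group-level classification, which is exactly the content of Theorem \ref{thm.approx-vanish-Cartan}.
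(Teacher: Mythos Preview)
Your overall architecture is exactly the paper's: reduce to genuine actions via Theorem \ref{thm.two-cocycle-vanishing-Cartan}, then prove a Cartan-preserving groupoid Ocneanu theorem (this is Theorem \ref{thm.ocneanu-eq-rel-groupoid} in the paper) by combining the group case \cite{BG84} with Theorem \ref{thm.equivariant-section}. However, you have misidentified the ``approximate uniqueness'' input needed for Theorem \ref{thm.equivariant-section}.

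In the $2$-cocycle vanishing proof, the Polish space $P_y$ was the space of $1$-cocycles for $\Gamma_y \actson B_y$, and dense orbits under $\cN_{B_y}(A_y)$ is precisely Theorem \ref{thm.approx-vanish-Cartan}. But for the cocycle-conjugacy problem the relevant $P_y$ is the space of \emph{pairs} $(\theta, (c_g)_g)$ with $\theta \in \Aut(A_y \subset B_y)$ and $c$ a $1$-cocycle satisfying $\theta \be_g \theta^{-1} = (\Ad c_g)\al_g$. The action of $w \in \cN_{B_y}(A_y)$ sends this to $(\Ad w \circ \theta,\, w c_g \al_g(w^*))$. Dense orbits here means: every Cartan-preserving cocycle \emph{self}-conjugacy $(\theta,c)$ of a fixed action is approximately inner, i.e.\ there exist $w_n \in \cN_B(A)$ with both $\Ad w_n \to \theta$ in $\Aut(B)$ \emph{and} $\|c_g - w_n \al_g(w_n^*)\|_2 \to 0$. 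Theorem \ref{thm.approx-vanish-Cartan} only gives you the second condition; it says nothing about the automorphism component $\theta$. The paper isolates this as a separate Lemma \ref{lem.approx-inner-cartan} and proves it by a different argument (exhausting $B \rtimes_\al G$ by ``compatible'' finite-dimensional subalgebras coming from the hyperfinite structure of the associated equivalence relation, and showing $\theta$ agrees with some $\Ad w$ on each such subalgebra). This is genuinely additional work, not a consequence of Theorem \ref{thm.approx-vanish-Cartan}, and without it the gluing via Theorem \ref{thm.equivariant-section} does not go through.
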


As in Remark \ref{rem.what-if-non-factorial}, it is possible to formulate a version of Theorem \ref{thm.classification-amenable-Cartan} without assuming factoriality, i.e.\ for von Neumann subalgebras of an arbitrary amenable tracial von Neumann algebra.

In order to deduce Theorem \ref{thm.classification-amenable-Cartan} from the vanishing of $2$-cohomology in Theorem \ref{thm.two-cocycle-vanishing-Cartan}, we need to show that an amenable discrete measured groupoid $\cG$ with $\cG^{(0)} = Y$ has only one outer measure preserving action (up to cocycle conjugacy) on the field $(\cS_y)_{y \in Y}$ of ergodic hyperfinite type II$_1$ equivalence relations.

For outer actions of amenable groups on the ergodic hyperfinite type II$_1$ equivalence relation, this was proved in \cite[Theorem 3.4]{BG84} and this should be considered as an equivalence relation version of Ocneanu's theorem \cite{O85}. For completeness, we include a short proof of this result (see Theorem \ref{thm.ocneanu-eq-rel} and Lemma \ref{lem.golodets} below). We then establish the groupoid case in Theorem \ref{thm.ocneanu-eq-rel-groupoid} (by invoking once more Theorem \ref{thm.equivariant-section}). Once Theorem \ref{thm.ocneanu-eq-rel-groupoid} proven, Theorem \ref{thm.classification-amenable-Cartan} follows from Theorem \ref{thm.two-cocycle-vanishing-Cartan}.

\begin{theorem}[{See \cite[Theorem 3.4]{BG84}}]\label{thm.ocneanu-eq-rel}
Let $B$ be the hyperfinite II$_1$ factor with Cartan subalgebra $A \subset B$ and associated equivalence relation $\cS$. Let $G$ be a countable amenable group and $\al,\be$ actions of $G$ on $A \subset B$ such that the induced action on $\cS$ is outer.

Then there exists a $*$-automorphism $\theta \in \Aut(B)$ and a $1$-cocycle $g \mapsto c_g \in \cN_B(A)$ for the action $\al$ such that $\theta(A) = A$ and
$$\theta \circ \be_g \circ \theta^{-1} = (\Ad c_g) \circ \al_g \quad\text{for all}\;\; g \in G \; .$$
\end{theorem}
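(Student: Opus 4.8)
The plan is to reduce the uniqueness of outer actions of $G$ on the Cartan inclusion $A \subset B$ to two classical facts: Ocneanu's theorem on the uniqueness up to cocycle conjugacy of free actions of amenable groups on the hyperfinite II$_1$ factor (\cite{O85}), and the uniqueness of the amenable ergodic type II$_1$ equivalence relation together with the vanishing of its $1$- and $2$-cohomology (\cite{CFW81}), so that free actions of $G$ on $\cS$ are cocycle conjugate (indeed conjugate after untwisting), as in \cite{BG84}. The subtlety is to do both \emph{simultaneously} and \emph{compatibly}, so that the automorphism implementing cocycle conjugacy of the actions on $\cS$ can be lifted to an automorphism of $B$ preserving $A$ and conjugating $\be$ into a cocycle perturbation of $\al$.

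First I would treat the abelian layer. Writing $A = L^\infty(X,\mu)$, the actions $\al,\be$ induce free, measure-preserving, ergodic actions $\al^{\cS},\be^{\cS}$ of $G$ on the type II$_1$ equivalence relation $\cS$. By \cite[Theorem 3.4]{BG84} (equivalently, by the uniqueness of the amenable ergodic type II$_1$ equivalence relation from \cite{CFW81} applied to the semidirect product groupoids $\cS \rtimes_\al G$ and $\cS \rtimes_\be G$, both of which are amenable, ergodic and type II$_1$), there is an orbit equivalence $\Phi$ of $\cS$ and a $1$-cocycle $\phi \colon G \to [\cS]$ such that $\Phi \circ \be^{\cS}_g \circ \Phi^{-1} = (\Ad \phi_g) \circ \al^{\cS}_g$ for all $g$. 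Realize $\Phi$ as a $*$-automorphism $\theta_0$ of $B$ with $\theta_0(A) = A$ (an orbit equivalence of $\cS$ automatically lifts to $\Aut(A \subset B)$ up to possibly adjusting a scalar $2$-cocycle, which here is trivial since $B$ is the \emph{hyperfinite} factor, i.e.\ $\cS$ has a lift with trivial cocycle), and realize $\phi_g$ as a unitary $c^0_g \in \cN_B(A)$; the cocycle identity for $\phi$ and the fact that $\cN_B(A)/\cU(A) \cong [\cS]$ force $g \mapsto c^0_g$ to be a $1$-cocycle for $\al$ \emph{modulo} $\cU(A)$, and by the vanishing of scalar $1$-cohomology on the hyperfinite equivalence relation we may correct by an element of $\cU(A)$ to make it a genuine $1$-cocycle for $\al$. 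After replacing $\be$ by $\theta_0 \circ \be \circ \theta_0^{-1}$ and then by $(\Ad c^0_g)^{-1} \circ \be_g$ — which is again an action of $G$ on $A \subset B$ outer on $\cS$ — we are reduced to the case where $\al$ and $\be$ \emph{induce the same action on $\cS$}.

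Now the two actions $\al,\be$ of $G$ agree on the Cartan subalgebra $A$ and induce the same automorphisms of $\cS$, so $\gamma_g := \al_g^{-1} \circ \be_g$ is, for each $g$, an automorphism of $B$ fixing $A$ pointwise and inducing the trivial automorphism of $\cS$ — hence $\gamma_g$ is \emph{inner}, $\gamma_g = \Ad d_g$ for a unitary $d_g \in B$, and since it commutes with $A$ we may take $d_g \in \cN_B(A)$ (in fact $d_g \in A' \cap B = A$ up to a scalar-valued correction, but more simply $d_g$ normalizes $A$); the obstruction to $(d_g)$ being a $1$-cocycle for $\al$ is then a scalar $2$-cocycle on the group $G$ valued in $\T$, which vanishes because $B$ is hyperfinite (equivalently, one appeals directly to Ocneanu's theorem: $\be$ and $\al$ are free actions of the amenable group $G$ on the hyperfinite II$_1$ factor $B$ that agree on $A$, so they are cocycle conjugate, and one pushes the conjugating automorphism and cocycle down to respect $A$). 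The main obstacle is precisely this last compatibility step: ensuring that Ocneanu's cocycle conjugacy on the factor level can be chosen to \emph{preserve $A$} and to match the orbit-equivalence already fixed on $\cS$; I expect this to be handled by a relative version of Ocneanu's argument (a Rokhlin-tower/paving argument carried out inside $\cN_B(A)$, exactly of the flavour of Theorem \ref{thm.approx-vanish-Cartan}), or alternatively by observing that once $\al$ and $\be$ agree on $A$ and on $\cS$ the remaining datum is a scalar group $2$-cocycle together with a cocycle action on the relative commutant, all of which is amenable and hyperfinite and hence untwistable. Assembling $\theta = (\text{inner correction}) \circ (\Ad c^0) \circ \theta_0$ and $c_g = c^0_g \cdot (\text{correction})$ yields the stated $\theta \in \Aut(B)$ with $\theta(A) = A$ and $1$-cocycle $c \colon G \to \cN_B(A)$ for $\al$ with $\theta \circ \be_g \circ \theta^{-1} = (\Ad c_g) \circ \al_g$.
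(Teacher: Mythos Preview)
Your step~2 contains a genuine error. After arranging that $\al$ and $\be$ agree on $A$ (hence induce the same action on $\cS$), you assert that $\gamma_g = \al_g^{-1}\be_g$ is inner in $B$. This is false in general: an automorphism of $B = L(\cS)$ fixing the Cartan $A$ pointwise is precisely the datum of a $\T$-valued $1$-cocycle $c$ on $\cS$ (via $\gamma(v) = v\cdot c_v$ for normalizing partial isometries $v$), and it is inner --- necessarily by an element of $\cU(A)$, since $A'\cap B = A$ --- if and only if $c$ is a coboundary. But $H^1(\cS,\T)$ is enormous for the hyperfinite II$_1$ equivalence relation, so $\gamma_g$ is typically outer. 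What remains after step~1 is therefore a $1$-cocycle of $G$ into $Z^1(\cS,\T)$ (for the action induced by $\al$), and trivializing it modulo $B^1(\cS,\T)$ and an $\Aut(\cS)$-conjugacy is not ``a scalar group $2$-cocycle on $G$'': it is a genuinely infinite-dimensional cohomological problem that your hedges do not resolve. Your step~1 is also shaky: citing \cite[Theorem~3.4]{BG84} is circular, and the CFW parenthetical only yields an isomorphism of the \emph{semidirect product} equivalence relations, not one matching the embedded copies of $\cS$ --- for that you already need Lemma~\ref{lem.golodets}.

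The paper's proof avoids this two-layer decomposition by passing to the crossed products $M = B\rtimes_\al G$ and $N = B\rtimes_\be G$. Both are hyperfinite II$_1$ factors with $A$ as Cartan subalgebra, so both identify with $L(\cR)$ for $\cR$ the hyperfinite II$_1$ equivalence relation, and $B$ corresponds to $L(\cS_i)$ where $\cS_i = \ker\om_i$ for the natural surjective cocycles $\om_i : \cR \to G$. Lemma~\ref{lem.golodets} then produces $\gamma \in \Aut(\cR)$ intertwining $\om_1$ and $\om_2$, hence $\gamma(\cS_2) = \cS_1$; the induced $*$-isomorphism $\theta : N \to M$ automatically satisfies $\theta(A)=A$ and $\theta(B)=B$, and one simply reads off $c_g$ from $\theta(u_g^\be) = c_g\,u_g^\al$. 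Working with the single equivalence relation $\cR$ on the crossed product packages the factor layer and the Cartan layer together, so no separate compatibility step is needed.
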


We prove Theorem \ref{thm.ocneanu-eq-rel} below, as an immediate consequence of the following lemma, which is a special case of \cite[Theorem 1.5]{GS86}.

\begin{lemma}[{See \cite[Theorem 1.5]{GS86}}]\label{lem.golodets}
Let $\cR$ be a countable pmp ergodic hyperfinite equivalence relation on the standard probability space $(X,\mu)$. Let $\al, \be : \cR \recht G$ be $1$-cocycles with values in a countable group $G$.
Assume that $\al,\be$ are surjective and that their kernels are ergodic subequivalence relations of $\cR$. Then, there exists $\theta \in \Aut(\cR)$ such that
$$\al(x,y) = \be(\theta(x),\theta(y)) \quad\text{for a.e. $(x,y) \in \cR$.}$$
\end{lemma}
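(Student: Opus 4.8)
The plan is to identify this with the simplest case of the Golodets--Sinel'shchikov classification of $1$-cocycles of amenable ergodic equivalence relations \cite{GS86}. Recall that to a $1$-cocycle $\al : \cR \recht G$ one attaches its skew product: the equivalence relation $\cR_\al$ on $X \times G$ (with $\mu$ on $X$ and the counting measure on $G$) defined by $(x,g)\,\cR_\al\,(y,h) \iff (x,y)\in\cR$ and $h = g\,\al(x,y)$, together with the free measure-preserving $G$-action $\tau^k_\al(x,g) = (x,kg)$; dividing $\cR_\al$ by $\tau_\al$ recovers $\cR$, and the induced $G$-grading of this quotient recovers $\al$. The first thing I would check is that under our hypotheses $\cR_\al$ is \emph{ergodic}: since $\cR$ is ergodic (it contains the ergodic subrelation $\ker\al$) and $\al$ is surjective as a measurable map, each level set $\{x : \exists\, y,\ (x,y)\in\cR,\ \al(x,y) = h\}$ is $\cR$-invariant and nonnull, hence conull, so for a.e.\ $x$ the map $y \mapsto \al(x,y)$ maps $[x]_\cR$ onto $G$; combined with ergodicity of $\ker\al$ this forces every $\cR_\al$-invariant, $\tau_\al$-invariant function to be constant. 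Thus $\cR_\al$ is an amenable ($\cR$ is hyperfinite and $G$ is countable) ergodic measure-preserving equivalence relation, of type $\mathrm{II}_1$ if $G$ is finite and of type $\mathrm{II}_\infty$ otherwise, and the same holds for $\cR_\be$.

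By the uniqueness of the amenable ergodic measure-preserving equivalence relation (\cite{CFW81}, together with Krieger's theorem in the infinite measure case), $\cR_\al$ and $\cR_\be$ are isomorphic as equivalence relations. The content of \cite{GS86} is that, $\cR$ being amenable, the conjugacy class of a cocycle $\al : \cR \recht G$ under $\Aut(\cR)$ is completely determined by the Radon--Nikodym cocycle of $\cR$ together with the Mackey range of $\al$. In our situation $\cR$ is pmp, so the Radon--Nikodym cocycle is trivial, and $\ker\al$ is ergodic, so $\cR_\al$ is ergodic and hence the Mackey range of $\al$ is the trivial one-point $G$-space; the same is true for $\be$. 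Therefore $\al$ and $\be$ fall in the same $\Aut(\cR)$-conjugacy class, i.e.\ there exists $\theta \in \Aut(\cR)$ with $\al(x,y) = \be(\theta(x),\theta(y))$ a.e., which is exactly the assertion.

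The main obstacle is this last step, which I would not attempt to reprove from scratch. An \emph{abstract} $G$-equivariant isomorphism $\cR_\al \cong \cR_\be$ (which one could already extract from \cite{CFW81} together with the uniqueness of free measure-preserving actions of the amenable group $G$ on the amenable ergodic type $\mathrm{II}_\infty$ relation) only descends to a $\theta$ for which $\be\circ(\theta\times\theta)$ is cohomologous to $\al$ through a $G$-valued coboundary, not equal to it. Removing the coboundary — equivalently, showing that the $\Aut(\cR)$-conjugacy class of $\al$ already absorbs all of its coboundary perturbations — requires constructing $\theta$ directly by an Ornstein--Weiss/Rokhlin-tower construction along $\cR$, in the same spirit as the argument used in the proof of Theorem \ref{thm.approx-vanish-Cartan}; this is precisely what \cite[Theorem 1.5]{GS86} supplies. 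The role of the hypothesis that $\ker\al$ and $\ker\be$ be ergodic is exactly to make the skew products $\cR_\al$ and $\cR_\be$ ergodic, so that this machinery applies with no residual ``associated flow'' invariant left to match.
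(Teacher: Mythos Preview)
Your reduction is correct: under the hypotheses both cocycles have trivial Mackey range, and invoking \cite[Theorem~1.5]{GS86} then yields the conclusion. (One small wrinkle: your ergodicity argument for $\cR_\al$ mentions $\tau_\al$-invariance, which is not what you need; but the sketch you give---$\ker\al$ ergodic forces each slice $f(\cdot,g)$ to be constant, and surjectivity of $\al$ from a.e.\ orbit forces these constants to agree---does establish ergodicity of $\cR_\al$ alone.) That said, you have not given a proof so much as a verification that the citation applies; you explicitly defer the construction of $\theta$ to \cite{GS86}.

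The paper takes a different, self-contained route. Rather than passing through skew products and Mackey ranges, it builds $\theta$ directly by a back-and-forth argument. One introduces \emph{decompositions} of $\cR$---finite tuples $(\vphi_1,\ldots,\vphi_n)$ of partial transformations whose graphs generate a finite subrelation---and declares a decomposition \emph{compatible} with an atomic $\sigma$-algebra $\cB_0$ and the cocycle $\al$ when the atoms of $\cB_0$ record the $\al$-values along the $\vphi_i$. An \emph{isomorphism} of compatible decompositions matches this data. One then alternates: at odd steps refine the $\al$-side so that the associated pseudogroup approximates a prescribed finite subset of $[[\cR]]$ (possible by hyperfiniteness), and extend the isomorphism to the $\be$-side; at even steps reverse roles. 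Two claims drive the induction: any decomposition can be made compatible by enlarging $\cB_0$, and any compatible refinement on one side can be matched on the other---this second claim is exactly where surjectivity of $\be$ and ergodicity of $\ker\be$ enter, to produce partial maps with prescribed $\be$-value, domain, and range. The limit of the isomorphisms is $\theta$.

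What each approach buys: yours situates the lemma in its natural general framework and makes the invariant (Mackey range) transparent, at the cost of black-boxing the hard step. The paper's argument is elementary and avoids the Mackey machinery entirely; it is essentially a Dye-type isomorphism theorem with cocycle labels carried along, and the ``compatible decomposition'' language it sets up is immediately reused in the proofs of Theorem~\ref{thm.ocneanu-eq-rel} and Lemma~\ref{lem.approx-inner-cartan}.
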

\begin{proof}
In the course of the proof, we again tacitly assume that all statements are valid up to sets of measure zero. We call a $\si$-algebra $\cB$ of Borel subsets of $X$ \emph{atomic} if $\cB$ is generated by countably many minimal elements, called atoms. Note that $\cB$ is atomic if and only if the algebra of bounded $\cB$-measurable functions is an atomic von Neumann subalgebra of $L^\infty(X)$.

We call \emph{decomposition} of $\cR$ any $n$-tuple $P=(\vphi_1,\ldots,\vphi_n)$ of elements of $[[\cR]]$ such that the range sets $X_i = R(\vphi_i)$ form a partition of $X$ with $D(\vphi_i) = X_1$ for all $i$ and $\vphi_1(x) = x$ for all $x \in X_1$. Note that the graphs of the elements $\vphi_i$ generate a finite subequivalence relation of $\cR$.

We say that such a decomposition is \emph{compatible} with an atomic $\si$-algebra $\cB_0$ and a cocycle $\al : \cR \recht G$ if the following conditions hold.
\begin{itemlist}
\item For every $i = 1,\ldots,n$, the set $X_i$ belongs to $\cB_0$ and a subset $\cU$ of $X_1$ belongs to $\cB_0$ if and only if $\vphi_i(\cU)$ belongs to $\cB_0$.
\item For every $g \in G$ and $i = 1,\ldots,n$, the set $\{x \in X_1 \mid \al(x,\vphi_i(x)) = g \}$ belongs to $\cB_0$.
\end{itemlist}
We denote by $\cG(P,\cB_0)$ the pseudogroup of partial transformations $\vphi$ with $D(\vphi), R(\vphi) \in \cB_0$ and with the graph of $\vphi$ contained in the finite subequivalence relation generated by $P$.

Assume now that we have two compatible decompositions: $P=(\vphi_1,\ldots,\vphi_n)$, $\cB_0$, $\al$, and $Q=(\psi_1,\ldots,\psi_n)$, $\cD_0$, $\be$. Write $X_1 = D(\vphi_i)$ and $Y_1 = D(\psi_i)$. We call \emph{isomorphism} between these two compatible decompositions any measure preserving isomorphism $\theta_0 : \cB_0 \recht \cD_0$ satisfying the following properties.
\begin{itemlist}
\item $\theta_0(X_1) = Y_1$ and $\theta_0(R(\vphi_i)) = R(\psi_i)$ for all $i$.
\item $\theta_0(\vphi_i(\cU)) = \psi_i(\theta(\cU))$ whenever $\cU \in \cB_0$ and $\cU \subset X_1$.
\item For every $g \in G$ and every $i$, $\theta_0$ maps the set $\{x \in X_1 \mid \al(x,\vphi_i(x)) = g \}$ to the set $\{y \in Y_1 \mid \be(y,\psi_i(y)) = g\}$.
\end{itemlist}

Further assume that $(\vphi'_1,\ldots,\vphi'_m)$ is decomposition of $\cR|_{X_1}$. We then define the refined decomposition $P'$ of $\cR$ consisting of the maps $\vphi_i \vphi'_j$. Let $\cC$ be any atomic $\si$-algebra containing $\cB_0$. We make the following two claims.
\begin{enumlist}
\item There exists an atomic $\si$-algebra $\cB_1$ containing $\cC$ such that the decomposition $P'$ is compatible with $\cB_1$ and $\al$.
\item There exists an atomic $\si$-algebra $\cD_1$ containing $\cD_0$ and $\cR|_{Y_1}$ admits a decomposition $(\psi'_1,\ldots, \psi'_m)$ such that the refined decomposition $Q'$ consisting of the maps $\psi_i \psi'_j$ is compatible with $\cD_1$ and $\be$, and such that $\theta_0$ can be extended to an isomorphism between $P',\cB_1,\al$ and $Q',\cD_1,\be$.
\end{enumlist}

Once these claims are proven, we inductively construct as follows finer and finer compatible decompositions $P_n,\cB_n,\al$ and $Q_n,\cD_n,\be$ and isomorphisms $\theta_n$ between them. At step 0, we take the trivial decomposition consisting of the identity map on $X$ and the trivial $\si$-algebras consisting of $\emptyset$ and $X$. At the $n$-th step for $n$ odd, using the first part of the claim, we choose the refinements $P_n$ and $\cB_n$ in such a way that the pseudogroup $\cG(P_n,\cB_n)$ approximately contains a given finite subset of $[[\cR]]$. This is possible because $\cR$ is hyperfinite. We then use the second part of the claim to pick $Q_n,\cD_n$ and to extend the isomorphism $\theta_{n-1}$ to $\theta_n$. At the next even step, we first choose the refinements $Q_{n+1},\cD_{n+1}$ in such a way that $\cG(Q_{n+1},\cD_{n+1})$ approximately contains a given finite subset of $[[\cR]]$. We then pick $P_{n+1},\cB_{n+1}$ and extend the isomorphism $\theta_n$ to $\theta_{n+1}$.

Taking the inductive limit, we find a pmp isomorphism $\theta : X \recht X$ with the following properties.
$$\theta \circ \vphi_i = \psi_i \circ \theta \quad\text{and}\quad \theta\bigl(\{x \in D(\vphi_i) \mid \al(x,\vphi_i(x)) = g \}\bigr) = \{y \in D(\psi_i) \mid \be(y,\psi_i(y)) = g \}$$
whenever $\vphi_i,\psi_i$ belong to one of the decompositions $P_n,Q_n$. The graphs of all $\vphi_i$, resp.\ all $\psi_i$, generate the entire equivalence relation $\cR$. The first property thus implies that $\theta \in \Aut(\cR)$. The second property says that $\al(x,\vphi_i(x)) = \be(\theta(x),\psi_i(\theta(x)))$. In combination with the first property, this means that $\al(x,y) = \be(\theta(x),\theta(y))$ for all $(x,y) \in \cR$ and the theorem is proven.

It remains to prove the two claims. To prove the first claim, it suffices to show the following: if $P=(\eta_1,\ldots,\eta_r)$ is a decomposition of $\cR$ and $\cC$ is an atomic $\si$-algebra, there exists an atomic $\si$-algebra $\cB$ containing $\cC$ such that $P$, $\cB$ and $\al$ are compatible. Write $Z_i = R(\eta_i)$. First refine $\cC$ so that $Z_i \in \cC$ for all $i$. For every $i$, define as follows the atomic $\si$-algebras $\cC_i$ and $\cD_i$ on $Z_1$. Define $\cC_i = \eta_i^{-1}(\cC|_{Z_i})$ and define $\cD_i$ generated by the partition of $Z_1$ into the subsets
$$Z_1 = \bigsqcup_{g \in G} \{x \in Z_1 \mid \al(x,\eta_i(x)) = g\} \; .$$
The finitely many atomic $\si$-algebras $\cC_i$ and $\cD_i$ generate an atomic $\si$-algebra on $Z_1$ that we denote as $\cE$. Then define $\cB$ as the $\si$-algebra generated by the $\si$-algebras $\eta_i(\cE)$ on $Z_i$. This concludes the proof of the first claim.

Since $\theta_0(X_1) = Y_1$ and thus $\theta_0({\cB_0}|_{X_1}) = {\cD_0}|_{Y_1}$, we can choose an atomic $\si$-algebra $\cE$ on $Y_1$ such that the restriction of $\theta_0$ to $\cB_0|_{X_1}$ extends to a measure preserving $\si$-algebra isomorphism $\theta : {\cB_1}|_{X_1} \recht \cE$. Write $X'_j = R(\vphi'_j)$ and write $X'_1 = \bigsqcup_n \cU_n$ where the $\cU_n$ are atoms of the $\si$-algebra $\cB_1$. Since the $\cU_n$ are atoms, we find elements $g_{j,n} \in G$ such that $\al(x,\vphi'_j(x)) = g_{j,n}$ for all $j$ and all $x \in \cU_n$. Since the cocycle $\be$ is surjective, we find non-negligible $\psi_{j,n} \in [[\cR]]$ such that $\be(y,\psi_{j,n}(y)) = g_{j,n}$. Since the kernel of $\be$ is ergodic, we can make this choice of $\psi_{j,n}$ such that $D(\psi_{j,n}) = \theta(\cU_n)$ and $R(\psi_{j,n}) = \theta(\vphi'_j(\cU_n))$. So, for a fixed $j \in \{1,\ldots,m\}$, the domains $D(\psi_{j,n})$, $n \in \N$, form a partition of $\theta(X'_1)$ and the ranges $R(\psi_{j,n})$, $n \in \N$, form a partition of $\theta(X'_j)$. We define $\psi'_j$ by gluing together the $\psi_{j,n}$, $n \in \N$. This concludes the proof of the second claim.
\end{proof}

\begin{proof}[Proof of Theorem \ref{thm.ocneanu-eq-rel}]
Write $M = B \rtimes_\al G$ and $N = B \rtimes_\beta G$. Denote by $\cR$ the hyperfinite ergodic type II$_1$ equivalence relation on the standard probability space $(X,\mu)$. Choose $*$-isomorphisms $\pi_1 : M \recht L(\cR)$ and $\pi_2 : N \recht L(\cR)$ satisfying $\pi_1(A) = L^\infty(X) = \pi_2(A)$. Then, $\pi_1(B) = L(\cS_1)$ and $\pi_2(B) = L(\cS_2)$ where the $\cS_i \subset \cR$ are subequivalence relations that arise as the kernel of the natural cocycles $\cR \recht G$. By Lemma \ref{lem.golodets}, there exists an automorphism $\gamma \in \Aut(\cR)$ satisfying $\gamma(\cS_2) = \cS_1$. We still denote by $\gamma$ the induced automorphism of $L(\cR)$. Then, $\theta = \pi_1^{-1} \circ \gamma \circ \pi_2$ is a $*$-isomorphism $\theta : N \recht M$ satisfying $\theta(A) = A$ and $\theta(B) = B$. Defining $c_g \in \cN_B(A)$ such that $\theta(u_g) = c_g u_g$ and restricting $\theta$ to $B$, we have found the required cocycle conjugacy between $\al$ and $\be$.
\end{proof}

In order to prove a groupoid version of Theorem \ref{thm.ocneanu-eq-rel}, we first need the following lemma on approximate innerness of a cocycle self-conjugacy.

\begin{lemma}\label{lem.approx-inner-cartan}
Let $B$ be the hyperfinite II$_1$ factor with Cartan subalgebra $A \subset B$ and associated equivalence relation $\cS$. Let $G$ be a countable amenable group and $\al$ an action of $G$ on $A \subset B$ such that the induced action on $\cS$ is outer.

Whenever $\theta \in \Aut(A \subset B)$ and $c_g \in \cN_B(A)$ is a $1$-cocycle for the action $\al$ satisfying $\theta \al_g \theta^{-1} = (\Ad c_g) \al_g$ for all $g \in G$, there exists a sequence $w_n \in \cN_B(A)$ such that
$$\Ad w_n \recht \theta \;\;\text{in $\Aut(B)$ and}\;\; \lim_n \| c_g - w_n \al_g(w_n^*) \|_2 = 0 \;\;\text{for all $g \in G$.}$$
\end{lemma}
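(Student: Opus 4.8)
The plan is to reformulate the statement inside a crossed product and then prove an approximate-innerness assertion there, by refining the Ornstein--Weiss Rokhlin argument from the proof of Theorem~\ref{thm.approx-vanish-Cartan} along a hyperfinite exhaustion of $\cS$, in the style of the proof of Lemma~\ref{lem.golodets}. Form $M = B \rtimes_\al G$ with canonical unitaries $(u_g)_{g \in G}$. Since the $G$-action on $\cS$ is outer, $\al$ is an outer action on the factor $B$ -- an inner $\al_g$ would be implemented by an element of $\cN_B(A)$, hence by a nontrivial element of $[\cS]$ -- so $M$ is the hyperfinite II$_1$ factor, $A \subset M$ is Cartan, and $B = L(\cS)$ is the intermediate subalgebra attached to the strongly normal subequivalence relation $\cS \subset \cR_M = \cS \rtimes G$. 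The hypotheses say exactly that $(\theta, c)$ is the information of an automorphism $\tilde\theta \in \Aut(M)$ with $\tilde\theta|_B = \theta$ and $\tilde\theta(u_g) = c_g u_g$ (the defining relations hold because $c$ is an $\al$-cocycle and $\theta\al_g\theta^{-1} = (\Ad c_g)\al_g$), fixing $A$ and $B$. Since $\Ad w(u_g) = w\al_g(w^*)\,u_g$ for $w \in \cN_B(A)$, a sequence $w_n \in \cN_B(A)$ with $\Ad w_n \recht \tilde\theta$ in $\Aut(M)$ automatically satisfies $\Ad w_n|_B \recht \theta$ and $\|c_g - w_n\al_g(w_n^*)\|_2 = \|w_n u_g w_n^* - \tilde\theta(u_g)\|_2 \recht 0$. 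So it suffices to show $\tilde\theta$ is approximately inner via $\cN_B(A)$; as $\Aut(M)$ is metrizable, it is enough, for each finite $F \subset G$, each $\eps > 0$ and each finite $X_0 \subset (M)_1$, to produce one $w \in \cN_B(A)$ with $\|w x w^* - \tilde\theta(x)\|_2 < \eps$ for $x \in X_0$ and $\|c_g - w\al_g(w^*)\|_2 < \eps$ for $g \in F$.

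Now put $\be_g = (\Ad c_g)\circ\al_g$, so $\theta\al_h = \be_h\theta$ and $\al,\be$ both act freely on $X = \cG^{(0)}$. The Ornstein--Weiss Rokhlin theorem (\cite[Theorem~4.46]{KL16}) applied to the free pmp $G$-action $(\al_g|_A)$ yields an almost-invariant tower $\{\al_h(p_i) \mid i \leq n, h \in T_i\}$ of $A$-projections of total trace $> 1-\eps$ with $|gT_i\cap T_i| \geq (1-\eps)|T_i|$ for $g \in F$; the relation $\theta\al_h = \be_h\theta$ makes $q_i := \theta(p_i)$ produce, for free, the matching $\be$-tower $\{\be_h(q_i)\} = \{\theta(\al_h(p_i))\}$, with $\tau(q_i)=\tau(p_i)$. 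The key step is the choice of partial isometries $v_i \in \cN_B(A)$ with $v_i^*v_i = p_i$, $v_iv_i^* = q_i$, $v_iAv_i^* = Aq_i$ that moreover \emph{approximately implement $\theta$ on the corner $p_iBp_i$}: such $v_i$ exist because $Aq_i \subset q_iBq_i$ is again a corner of the hyperfinite factor $B$ along the amenable ergodic II$_1$ relation $\cS$, and $\theta|_{p_iBp_i}$, transported by any $\cS$-partial isometry $p_i \recht q_i$, becomes an automorphism of this amenable Cartan inclusion, which is approximately inner via its normalizer -- this is where hyperfiniteness of $B$ enters, and it is obtained (together with the compatible choice across the tower) by running the back-and-forth argument of the proof of Lemma~\ref{lem.golodets} along a hyperfinite exhaustion $\cS = \bigcup_k \cS_k$, interleaved with Theorem~\ref{thm.approx-vanish-Cartan} to supply the $1$-cocycle corrections that keep $c$ untwisted. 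Assembling $w = w_0 + \sum_{i \leq n}\sum_{h \in T_i} c_h\,\al_h(v_i)$, where $w_0 \in B$ is a partial isometry $1-p \recht 1-\theta(p)$ normalizing $A$ (available by ergodicity of $\cS$ since $\tau(1-p)=\tau(1-\theta(p))$), gives an element of $\cN_B(A)$ -- orthogonal source projections $\al_h(p_i)$, orthogonal range projections $\be_h(q_i)=\theta(\al_h(p_i))$ -- with $\|c_g - w\al_g(w^*)\|_2 = O(\sqrt\eps)$ on $F$ (Følner estimate, as in Theorem~\ref{thm.approx-vanish-Cartan}) and, on the tower, $w\bigl(\al_h(p_i)\,y\,\al_h(p_i)\bigr)w^* = c_h\al_h(v_i\,\al_h^{-1}(y)\,v_i^*)c_h^* \approx \be_h\bigl(\theta(\al_h^{-1}(y))\bigr) = \theta\bigl(\al_h(p_i)\,y\,\al_h(p_i)\bigr)$, so $\Ad w$ reproduces $\theta$ on $Bp$; refining the tower along $\bigcup_k \cS_k$ upgrades this to $\Ad w \recht \theta$ on all of $B$.

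The main obstacle is exactly the interleaving in the key step: a single Rokhlin tower for the $G$-action controls $w\al_g(w^*)$ via the Følner condition but only controls $\Ad w$ on tower-supported elements, so to force $\Ad w \recht \theta$ on all of $B$ one must run the Lemma~\ref{lem.golodets}-type back-and-forth along the $\cS_k$, and at each stage the level-$k$ extension of $\theta$ on the corners, the $G$-Rokhlin tower, and the cocycle correction of Theorem~\ref{thm.approx-vanish-Cartan} must be chosen compatibly while the lower levels are kept fixed. What makes this run is that $(\theta,c)$ is an \emph{exact} cocycle self-conjugacy: the identity $\theta\al_h = \be_h\theta$ pairs the two Rokhlin towers and pins down the admissible $v_i$, so the off-tower correction and the $O(\sqrt\eps)$ errors are absorbed at every stage. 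Carrying out this bookkeeping -- the $G$-equivariant, cocycle-compatible choice of partial isometries level by level -- is the technical heart of the argument.
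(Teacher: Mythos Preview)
Your reduction is exactly right and matches the paper: extend $\theta$ to $\tilde\theta \in \Aut(M)$ with $M = B \rtimes_\al G$ via $\tilde\theta(u_g) = c_g u_g$, and prove that $\tilde\theta$ is approximately inner through $\cN_B(A)$. From that point on, however, your strategy diverges from the paper's and carries a gap you yourself flag but do not close.

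Your plan is to build $w$ from a $G$-Rokhlin tower, choosing base partial isometries $v_i : p_i \to \theta(p_i)$ that approximately implement $\theta$ on the corners $p_i B p_i$. The F{\o}lner estimate $\|c_g - w\al_g(w^*)\|_2 = O(\sqrt{\eps})$ then goes through exactly as in Theorem~\ref{thm.approx-vanish-Cartan}. The problem is the other half: for $\Ad w$ to approximate $\theta$ on a prescribed finite subset of $B$, you need control not just on the diagonal blocks $\al_h(p_i)\,B\,\al_h(p_i)$ but on all off-diagonal blocks $\al_h(p_i)\,B\,\al_k(p_j)$ with $(i,h)\neq(j,k)$. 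That requires the family $\{c_h\al_h(v_i)\}$ to be \emph{coherent} as an approximate implementation of $\theta$, not merely good corner by corner; your appeal to ``running the Lemma~\ref{lem.golodets} back-and-forth along $\cS_k$, interleaved with Theorem~\ref{thm.approx-vanish-Cartan}'' names the difficulty without supplying the mechanism. Two approximations (tower $\nearrow X$ and $v_i \approx \theta$) with interacting errors and an unspecified inductive scheme is the entire content left to do.

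The paper avoids this completely by working in $M$ rather than in $B$. It introduces \emph{compatible subalgebras} $M_0\subset M$: those generated by an atomic $A_0\subset A$ and partial isometries $v_1,\dots,v_n\in M$ with $v_iv_i^*=p_i\in A_0$, $\sum p_i=1$, $v_i^*v_i=p_1$, $v_i A v_i^* = Ap_i$, and with each $v_i$ decomposing along $A_0$ into pieces in the cosets $Bu_g$. Such $M_0$ exhaust $M$ (this is where the link with Lemma~\ref{lem.golodets} is actually used). For a fixed $M_0$ one first picks $w_1\in\cN_B(A)$ with $w_1^*\theta(a)w_1=a$ for $a\in A_0$, and then sets
\[
w_2=\sum_{j=1}^n \theta_1(v_j)\,v_j^*,\qquad \theta_1=(\Ad w_1^*)\circ\theta.
\]
Because $v_jp_{j,g}\in Bu_g$ and $\theta_1(Bu_g)=Bu_g$, the $u_g$'s cancel in each summand and $w_2\in B$; one checks $w_2\in\cN_B(A)$ and $w_2 v_i=\theta_1(v_i)w_2$. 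Thus $w=w_1w_2\in\cN_B(A)$ implements $\tilde\theta$ \emph{exactly} on $M_0$. All approximation is pushed into the single exhaustion $M_0\nearrow M$; there is no Rokhlin tower, no corner-by-corner approximation of $\theta$, and no error management. This exact-on-$M_0$ formula is the idea your proposal is missing.
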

\begin{proof}
Write $M = B \rtimes_\al G$ and denote by $(u_g)_{g \in G}$ the canonical unitary operators in this crossed product. We call \emph{compatible von Neumann subalgebra} of $M$ any von Neumann subalgebra $M_0 \subset M$ that is generated by an atomic von Neumann subalgebra $A_0 \subset A$ and finitely many partial isometries $v_1,\ldots,v_n \in M$ satisfying the following properties.
\begin{itemlist}
\item The projections $p_i = v_i v_i^*$ belong to $A_0$ and sum up to $1$. We have $v_i^* v_i = p_1$ for every $i$.
\item We have $v_i A_0 v_i^* = A_0 p_i$ and $v_i A v_i^* = A p_i$.
\item There exist projections $p_{i,g} \in A_0$ such that for every $i$, we have that
$$\sum_{g \in G} p_{i,g} = p_1 \quad \text{and} \quad v_i p_{i,g} \in B u_g \;\;\text{for all $g \in G$.}$$
\end{itemlist}

We claim that $M$ can be generated by an increasing sequence of compatible von Neumann subalgebras. To prove this claim, let $\cR$ be the hyperfinite ergodic type II$_1$ equivalence relation on the standard probability space $(X,\mu)$. Choose a $*$-isomorphism $\pi : M \recht L(\cR)$ satisfying $\pi(A) = L^\infty(X)$. Then, $\pi(B) = L(\cS)$ for an ergodic subequivalence relation $\cS \subset \cR$ that arises as the kernel of a surjective cocycle $\cR \recht G$. Under the isomorphism $\pi$, the compatible von Neumann subalgebras of $M$ correspond to the compatible decompositions of $\cR$ introduced in the proof of Lemma \ref{lem.golodets}. Our claim then follows from claim 1 in that proof.

Extend $\theta$ to an automorphism of $M$ by defining $\theta(u_g) = c_g u_g$. To prove the lemma, we need to prove that there exist elements $w_n \in \cN_B(A)$ such that $\Ad w_n \recht \theta$ in $\Aut(M)$. Let $M_0 \subset M$ be a compatible von Neumann subalgebra generated by $A_0 \subset A$ and $v_1,\ldots,v_n$ as above. Given the claim above, it suffices to show that there exists an element $w \in \cN_B(A)$ such that $\theta(a) = w a w^*$ for all $a \in A_0$ and $\theta(v_i) = w v_i w^*$ for all $i \in \{1,\ldots,n\}$.

First note that $\theta(A_0)$ is a discrete von Neumann subalgebra of $A$ whose minimal projections have the same trace values as those of $A_0$. Since $B$ is a factor, we can pick $w_1 \in \cN_B(A)$ such that $w_1^* \theta(a) w_1 = a$ for all $a \in A_0$. Write $\theta_1 = (\Ad w_1^*) \circ \theta$. We claim that
$$w_2 = \sum_{j=1}^n \theta_1(v_j) v_j^*$$
belongs to $\cN_B(A)$ and satisfies $w_2 a = a w_2$ for all $a \in A_0$ and $w_2 v_i = \theta_1(v_i) w_2$ for all $i \in \{1,\ldots,n\}$. Once this claim is proven, the element $w = w_1 w_2$ in $\cN_B(A)$ satisfies the required properties.

Define the projections $q_{i,g} = v_i p_{i,g} v_i^*$. By our assumptions, $q_{i,g} \in A_0$ and $q_{i,g} v_i = v_i p_{i,g}$, while $q_{i,g} v_j = 0$ if $i \neq j$. Also,
$$\sum_{i=1}^n \sum_{g \in G} q_{i,g} = 1 \; .$$
Since $\theta_1$ is the identity on $A_0$, it follows that $w_2 q_{i,g} = \theta_1(v_i p_{i,g}) (v_i p_{i,g})^*$. Since $v_i p_{i,g} \in B u_g$ and thus also $\theta_1(v_i p_{i,g}) \in Bu_g$, it follows that $w_2 \in B$.

Since $\theta_1(A) = A$ and since the $v_j$ normalize $A$, we have for every $j \in \{1,\ldots,n\}$ that
$$w_2 \, A p_j \, w_2^* = \theta_1(v_j) v_j^* A v_j \theta_1(v_j^*) = \theta_1(v_j) \, A p_1 \, \theta_1(v_j^*) = \theta_1 (v_j A v_j^*) = \theta_1(A p_j) = A p_j \; .$$
So, $w_2 \in \cN_B(A)$. The formula $w_2 a = a w_2$ for all $a \in A_0$ follows because $\theta_1$ is the identity on $A_0$ and the $v_j$ normalize $A_0$. The formula $w_2 v_i = \theta_1(v_i) w_2$ holds by definition.
\end{proof}

\begin{theorem}\label{thm.ocneanu-eq-rel-groupoid}
Let $\cG$ be an amenable discrete measured groupoid with $\cG^{(0)} = Y$. Let $(A_y \subset B_y)_{y \in Y}$ be a measurable field of Cartan subalgebras with each $B_y$ being isomorphic to the hyperfinite II$_1$ factor. Let $\al,\be$ be free actions of $\cG$ on $(A_y \subset B_y)_{y \in Y}$. Then, $\al$ and $\be$ are cocycle conjugate: there exists a measurable field of $*$-automorphisms $\theta_y \in \Aut(B_y)$ satisfying $\theta_y(A_y) = A_y$ and a measurable field $\cG \ni g \mapsto c_g \in \cN_{B_{t(g)}}(A_{t(g)})$ such that
$$\theta_{t(g)} \circ \be_g \circ \theta_{s(g)}^{-1} = (\Ad c_g) \circ \al_g \quad\text{and}\quad c_{hk} = c_h \, \al_h(c_k)$$
for all $g \in \cG$ and $(h,k) \in \cG^{(2)}$.
\end{theorem}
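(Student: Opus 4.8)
The strategy is to reduce the groupoid statement to its isotropy-group counterpart, namely Theorem~\ref{thm.ocneanu-eq-rel}, and then to glue the group-level cocycle conjugacies equivariantly using the technical patching device Theorem~\ref{thm.equivariant-section}, exactly as in the proofs of Theorems~\ref{thm.two-cocycle-vanishing} and~\ref{thm.two-cocycle-vanishing-Cartan}. First I would pass, as in the proof of Theorem~\ref{thm.two-cocycle-vanishing}, to the associated amenable equivalence relation $\cR = \{(t(g),s(g)) \mid g \in \cG\}$ on $(Y,\mu)$, choose (using hyperfiniteness of $\cR$ and a measurable lift $q : \cR \recht \cG$ that is a morphism) a semidirect product decomposition of $\cG$ into the field of isotropy groups $(\Gamma_y)_{y \in Y}$ and the conjugating isomorphisms $\delta_{(x,y)} : \Gamma_y \recht \Gamma_x$, $\delta_{(x,y)}(g) = q(x,y) g q(x,y)^{-1}$. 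Restricting $\al$ and $\be$ to the isotropy groups, for a.e.\ $y$ we obtain two free (outer on $\cS_y$) actions $\al^y, \be^y$ of the countable amenable group $\Gamma_y$ on the hyperfinite Cartan inclusion $A_y \subset B_y$.

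**The equivariant patching.** For each $y$, Theorem~\ref{thm.ocneanu-eq-rel} provides a cocycle conjugacy: a $\theta_y \in \Aut(A_y \subset B_y)$ and a $1$-cocycle $c^y : \Gamma_y \recht \cN_{B_y}(A_y)$ with $\theta_y \be^y_g \theta_y^{-1} = (\Ad c^y_g)\al^y_g$. I would encode this choice as a point $\pi(y)$ in a suitable Polish space $P_y$ — concretely, $P_y$ is the space of pairs $(\theta, c)$ with $\theta \in \Aut(A_y \subset B_y)$ and $c$ a $1$-cocycle $\Gamma_y \recht \cN_{B_y}(A_y)$ for $\al^y$ satisfying the intertwining identity, topologized so that $\theta \mapsto \Ad w$-type perturbations are continuous — on which the Polish group $\cN_{B_y}(A_y)$ (or $\cU(B_y)$ together with $\cN_{B_y}(A_y)$) acts by $w \cdot (\theta, c) = (\Ad w \circ \theta, \, (w \al^y_g(w^*) c_g)_g)$; this is the standard replacement of one cocycle conjugacy by another via a coboundary. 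The maps $\delta_{(x,y)}$ together with $\al_{q(x,y)}$ then induce a measurable field of conjugacies $\beta_{(x,y)} : P_y \recht P_x$ satisfying the cocycle identity $\beta_{(x,y)}\circ\beta_{(y,z)} = \beta_{(x,z)}$ and compatible with the group actions, just as in the proof of Theorem~\ref{thm.two-cocycle-vanishing}. The crucial input is that the $\cN_{B_y}(A_y)$-orbit of $\pi(y)$ is dense in (the relevant orbit closure inside) $P_y$: this is precisely Lemma~\ref{lem.approx-inner-cartan}, which says that any self-cocycle-conjugacy is approximately inner via normalizing unitaries, so any two cocycle conjugacies differ by an approximate coboundary. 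Given this, Theorem~\ref{thm.equivariant-section} yields a measurable section $y \mapsto \pi'(y) = (\theta'_y, c'^y)$ and a $1$-cocycle $\cR \ni (x,y) \mapsto v(x,y) \in \cN_{B_x}(A_x)$ with $v(x,y)\cdot\beta_{(x,y)}(\pi'(y)) = \pi'(x)$.

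**Assembling the groupoid cocycle conjugacy.** From $\pi'$ and $v$ I would then read off the desired data: set $\theta_y := \theta'_y$, and define $c_g$ for general $g \in \cG$ by decomposing $g = q(t(g),s(g))\cdot g_0$ with $g_0 \in \Gamma_{s(g)}$ (using $s(g_0)=t(g_0)=s(g)$) and combining the isotropy-level cocycle $c'^{s(g)}_{g_0}$, the lift-level unitary $v(t(g),s(g))$, and the automorphisms $\al_{q(\cdot,\cdot)}$ in the manner dictated by the semidirect product structure; the equivariance relation $v(x,y)\cdot\beta_{(x,y)}(\pi'(y))=\pi'(x)$ is exactly what guarantees that the resulting $g \mapsto c_g$ satisfies the $1$-cocycle identity $c_{hk} = c_h\,\al_h(c_k)$ on all of $\cG^{(2)}$ and that $\theta_{t(g)}\be_g\theta_{s(g)}^{-1} = (\Ad c_g)\al_g$ for all $g \in \cG$. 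I expect the main obstacle to be bookkeeping rather than conceptual: setting up $P_y$ and the action on it so that (a) it is genuinely a measurable field of \emph{continuous} Polish-group actions on Polish spaces (this is where \cite[Sections 1 and 2]{Sutherland85} and the measurable-field formalism are needed), (b) the orbit-density hypothesis of Theorem~\ref{thm.equivariant-section} is literally Lemma~\ref{lem.approx-inner-cartan}, and (c) the passage back and forth through the semidirect-product coordinates $q$ produces genuine (not merely cocycle) identities. As in the earlier proofs, one must also handle the two cases of the Ornstein--Weiss dichotomy for $\cR$ (finite orbits versus a free $\Z$-action) to obtain the morphism lift $q$, but this is identical to the corresponding step in the proof of Theorem~\ref{thm.two-cocycle-vanishing} and requires no new idea.
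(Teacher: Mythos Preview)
Your proposal is correct and follows essentially the same approach as the paper's proof: decompose $\cG$ as a semidirect product of the isotropy bundle $(\Gamma_y)_{y\in Y}$ and the amenable equivalence relation $\cR$, define $P_y$ as the Polish space of cocycle conjugacies between $\al|_{\Gamma_y}$ and $\be|_{\Gamma_y}$ with $G_y=\cN_{B_y}(A_y)$ acting, invoke Theorem~\ref{thm.ocneanu-eq-rel} for nonemptiness and Lemma~\ref{lem.approx-inner-cartan} for dense orbits, then apply Theorem~\ref{thm.equivariant-section} and read off $(\theta_y)$ and the combined $1$-cocycle $c$. The paper's write-up is terser but the ingredients and their roles are identical to what you describe.
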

\begin{proof}
As in the proof of Theorem \ref{thm.classification-amenable}, we denote by $\Gamma_y = \{g \in \cG \mid s(g) = y = t(g)\}$ the isotropy groups of $\cG$ and we view $\cG$ as the semidirect product of the field $(\Gamma_y)_{y \in Y}$ of groups and the measurable family of group isomorphisms $\delta_{(y,z)} : \Gamma_z \recht \Gamma_y$ where $(y,z) \in \cR$ and $\cR$ is an amenable nonsingular countable equivalence relation on $Y$.

Denote by $P_y$ the Polish space of cocycle conjugacies between the actions $(\al_g)_{g \in \Gamma_y}$ and $(\be_g)_{g \in \Gamma_y}$ on $A_y \subset B_y$. More precisely,
\begin{align*}
P_y := \bigl\{\bigl(\theta,(c_g)_{g \in \Gamma_y}\bigr) \bigm| \; & \theta \in \Aut(B_y) \;\; ,  \;\; \theta(A_y) = A_y \;\; ,  \;\; c_g \in \cN_{B_y}(A_y) , \\ & c_{gh} = c_g \al_g(c_h)  \;\; ,  \;\; \theta \be_g \theta^{-1} = (\Ad c_g) \al_g \;\;\text{for all}\;\; g,h \in \Gamma_y \bigr\} \; .
\end{align*}
The topology on $P_y$ is given by the usual topology on $\Aut(B_y)$ and the topology of pointwise $\|\cdot\|_2$-convergence on the space of $1$-cocycles. By Theorem \ref{thm.ocneanu-eq-rel}, $P_y$ is nonempty. Writing $G_y = \cN_{B_y}(A_y)$, we have a natural action $G_y \actson P_y$. By Lemma \ref{lem.approx-inner-cartan}, this action has dense orbits.

Define the measurable field of continuous group isomorphisms $\cR \ni (y,z) \mapsto \al_{(y,z)} : G_z \recht G_y$. Also define the homeomorphisms
$\cR \ni (y,z) \mapsto \gamma_{(y,z)} : P_z \recht P_y$ given by mapping $(\theta,(c_g)_{g \in \Gamma_z}) \in P_z$ to the automorphism $\al_{(z,y)} \theta \beta_{(y,z)}$ of $B_z$ and the $1$-cocycle $g \mapsto \al_{(z,y)}(c_{\delta_{(y,z)}(g)})$. By Theorem \ref{thm.equivariant-section}, we find a measurable field of cocycle conjugacies $\pi_y = (\theta_y,(c_g)_{g \in \Gamma_y})$ and a $1$-cocycle $\cR \ni (y,z) \mapsto c_{(y,z)} \in G_y$ such that $\pi_y = c_{(y,z)} \cdot \pi_z$.
This precisely means that the $(c_g)_{g \in \Gamma_y}$ and $(c_{(y,z)})_{(y,z) \in \cR}$ combine into a $1$-cocycle for $\al$, which together with $(\theta_y)_{y \in Y}$ forms the required cocycle conjugacy.
\end{proof}

\section{Remarks on treeability and cost of equivalence relations}\label{sec.tree-cost}

The class $\VC$, introduced in \cite{Popa18}, is defined as the class of countable groups $\Gamma$ satisfying $2$-cohomology vanishing for cocycle actions on II$_1$ factors. We still denote by $\VC$ the class of discrete measured groupoids $\cG$ satisfying the conclusion of Theorem \ref{thm.two-cocycle-vanishing}. Similarly, we denote by $\VCCartan$ the class of discrete measured groupoids satisfying the conclusion of Theorem \ref{thm.two-cocycle-vanishing-Cartan}.

As in \cite[Remarks 4.5]{Popa18}, one may speculate that for a pmp discrete measured groupoid $\cG$, we have that $\cG \in \VC$ iff $\cG \in \VCCartan$ iff $\cG$ is treeable, in the sense discussed in this section. We elaborate on these speculations below and prove that a groupoid in $\VCCartan$ must be treeable (see Proposition \ref{prop.VCCartan-implies-treeable}). This also leads us to a new notion of treeability for countable pmp equivalence relations, which we call \emph{weak treeability}. In Proposition \ref{prop.all-kinds-treeability}, we relate the well known open problem whether every treeable countable group $\Gamma$ is strongly treeable to the question whether every weakly treeable equivalence relation is treeable. Analogously, we introduce a \emph{fixed price question} for equivalence relations and connect it to the same question for countable groups in Remark \ref{rem.fixed-price}.

We first formulate a definition and then explain the required terminology.

\begin{definition}\label{def.treeable-groupoid}
A pmp discrete measured groupoid $\cG$ is said to be treeable if $\cG$ admits a free pmp action on a field of standard probability spaces such that the orbit equivalence relation is treeable.
\end{definition}

More concretely, $\cG$ is treeable if and only if there exists a countable pmp equivalence relation $\cS$ on $(Y,\eta)$ that is treeable, a measure preserving factor map $\pi_0 : Y \recht \cG^{(0)}$ and a nonsingular factor map $\pi : \cS \recht \cG$ satisfying the following properties
\begin{align*}
& s(\pi(x,y)) = \pi_0(y) \;\; , \;\; t(\pi(x,y)) = \pi_0(x) \;\; , \;\; \pi(x,x) = \pi_0(x) \quad\text{for a.e.\ $(x,y) \in \cS$,}\\
& \pi(x,y) \, \pi(y,z) = \pi(x,z) \quad\text{for a.e.\ $(x,y), (y,z) \in \cS$,}\\
& \text{$\pi$ maps $\cS \cdot y \times \{y\}$ bijectively onto $s^{-1}(\pi_0(y))$ for a.e.\ $y \in Y$.}
\end{align*}

Recall that a countable equivalence relation $\cS$ on $(Y,\eta)$ is called treeable if there exists a Borel graph $\cT$ on $Y$ with $\cT \subset \cS$ and such that for a.e.\ $y \in Y$, the connected component of $y$ in $\cT$ is a tree with vertex set $\cS \cdot y$.

\begin{remark}\label{rem.weakly-treeable}
Definition \ref{def.treeable-groupoid} is entirely analogous to the definition of a treeable group. Nevertheless, this definition is potentially confusing: a countable pmp equivalence relation $\cR$  can also be viewed as a groupoid. The treeability of $\cR$ as a groupoid, in the sense of Definition \ref{def.treeable-groupoid}, is in principle weaker than the treeability of $\cR$ as an equivalence relation.

We therefore say that a pmp equivalence relation $\cR$ is weakly treeable if it is treeable as a groupoid. So the countable pmp equivalence relation $\cR$ on $(X,\mu)$ is weakly treeable if and only if there exists a countable pmp equivalence relation $\cS$ on $(Y,\eta)$ that is treeable and a measure preserving factor map $\pi : Y \recht X$ mapping $\cS \cdot y$ bijectively onto $\cR \cdot \pi(y)$ for a.e.\ $y \in Y$.
\end{remark}

In the same way as the Connes-Jones cocycles of \cite{CJ84} were used in \cite[Section 4]{Popa18} to obtain restrictions on the countable groups in $\VC$, we can prove the following result.

\begin{proposition}\label{prop.VCCartan-implies-treeable}
If a pmp discrete measured groupoid $\cG$ belongs to $\VCCartan$, then $\cG$ is treeable in the sense of Definition \ref{def.treeable-groupoid}.
\end{proposition}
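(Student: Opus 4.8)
The plan is to argue by contraposition, using a Connes--Jones cocycle exactly as in \cite[Section 4]{Popa18}: assuming that $\cG$ is \emph{not} treeable, I will produce a free cocycle action of $\cG$ on a field of Cartan inclusions in II$_1$ factors whose $2$-cocycle fails to be a co-boundary within the normalizer, thereby contradicting $\cG \in \VCCartan$ via Theorem \ref{thm.two-cocycle-vanishing-Cartan}. Working with the \emph{Cartan} class $\VCCartan$ rather than $\VC$ is essential here: a treeing of an equivalence relation lives inside the normalizer of the Cartan subalgebra, so the co-boundary un-twisting the Connes--Jones cocycle has to be found among normalizing unitaries.

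First, I would fix any free pmp action of $\cG$ on a field of standard probability spaces $(Z_y)_{y \in \cG^{(0)}}$; by our assumption the associated orbit equivalence relation $\cR^0$ on $\coprod_y Z_y$ is not treeable, so no countable graphing of $\cR^0$ is a treeing. Next, following the Connes--Jones construction, I would build from a fixed countable graphing $\Phi = (\vphi_n)_n$ of $\cR^0$ (via a free-product-type construction over $A = L^\infty(\coprod_y Z_y) = \int^{\oplus}_{\cG^{(0)}} A_y \, d\mu(y)$) a measurable field of Cartan inclusions $(A_y \subset B_y)_{y \in \cG^{(0)}}$ in II$_1$ factors with separable predual, together with a free cocycle action $(\al,u)$ of $\cG$ that globally preserves the Cartan field and induces an outer action on the associated equivalence relations, designed so that the Connes--Jones cocycle $u$ is a co-boundary inside $\cN_B(A)$ \emph{if and only if} the graphing $\Phi$ can be replaced by a treeing of $\cR^0$ --- equivalently, if and only if $\cR^0$ is treeable. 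All of this is carried out measurably in $y$ using the measurable-selection machinery of \cite[Sections 1 and 2]{Sutherland85} and von Neumann's selection theorem, as elsewhere in the paper.

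Now suppose $\cG \in \VCCartan$. Applying Theorem \ref{thm.two-cocycle-vanishing-Cartan} to $(\al,u)$ produces normalizing unitaries $w_g \in \cN_{B_{t(g)}}(A_{t(g)})$ with $u(g,h) = \al_g(w_h^*) \, w_g^* \, w_{gh}$ for all $(g,h) \in \cG^{(2)}$. By the defining property of the Connes--Jones cocycle recalled above, this forces $\cR^0$ to be treeable, contradicting the choice of the action of $\cG$. Hence $\cG$ must be treeable in the sense of Definition \ref{def.treeable-groupoid}, which is what we had to prove.

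The main obstacle is the construction itself: setting up the Connes--Jones cocycle in the measurable-field framework and verifying its key property, namely that vanishing of $u$ inside $\cN_B(A)$ is genuinely equivalent to treeability of $\cR^0$. For groups this is the content of \cite[Section 4]{Popa18}; here one must in addition carry out every choice measurably in $y$ and verify freeness of $(\al,u)$, global invariance of the Cartan field, and outerness of the induced action on the associated relations. Where pointwise-in-$y$ data must be assembled into a genuine measurable field in an equivariant fashion, the equivariant-selection Theorem \ref{thm.equivariant-section} is the natural tool, exactly as in the proofs of Theorems \ref{thm.two-cocycle-vanishing} and \ref{thm.ocneanu-eq-rel-groupoid}.
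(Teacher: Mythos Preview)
Your approach and the paper's diverge significantly, and your proposal has a genuine gap.

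The paper argues \emph{directly}, not by contraposition. It writes $\cG = \bigcup_n \cU_n$ with each $\cU_n$ a bisection, takes $\Gamma = \F_\infty$ acting on $X = \cG^{(0)}$ through the partial homeomorphisms $\vphi_n$ coming from the $\cU_n$ (with half the free generators acting trivially), and forms the diagonal action $\Gamma \actson X \times Y$ with $\Gamma \actson Y$ a free mixing pmp action. The orbit relation $\cS$ on $X \times Y$ is then treeable simply because $\Gamma$ is free. There is a canonical groupoid morphism $\pi : \cS \to \cG$ whose kernel is a field of ergodic type II$_1$ equivalence relations, and $\pi$ induces a cocycle action of $\cG$ on this field of Cartan inclusions. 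Invoking $\cG \in \VCCartan$ (by \emph{definition} of the class, not via Theorem~\ref{thm.two-cocycle-vanishing-Cartan}, which requires amenability) untwists this to a genuine action $g \mapsto \theta_g$. The graph $\cS_0 = \{((t(g),\theta_g(y)),(s(g),y))\}$ is then a subequivalence relation of $\cS$, hence treeable by \cite[Th\'{e}or\`{e}me~5]{G99}, and $\pi|_{\cS_0}$ exhibits $\cG$ as treeable in the sense of Definition~\ref{def.treeable-groupoid}. No equivariant selection (Theorem~\ref{thm.equivariant-section}) is needed.

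Your contrapositive route requires you to build, from an arbitrary non-treeable orbit relation $\cR^0$, a cocycle action $(\al,u)$ of $\cG$ on Cartan inclusions with the property that $u$ is a co-boundary in the normalizer \emph{if and only if} $\cR^0$ is treeable. You assert this biconditional but do not construct it, and the reference to \cite[Section~4]{Popa18} does not supply it: that section produces, for property~(T) groups, specific $2$-cocycles that fail to untwist, which is a rather different statement from an ``untwisting $\Leftrightarrow$ treeability'' equivalence for an arbitrary graphing of an arbitrary relation. In fact only the implication ``$u$ coboundary $\Rightarrow$ treeable'' is needed, and once you try to prove that implication you are essentially forced into the paper's direct argument: the co-boundary gives a section of $\pi$, hence a subrelation of an ambient treeable relation, hence a treeing. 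The contrapositive framing buys nothing and obscures the mechanism. Finally, note that you cannot ``apply Theorem~\ref{thm.two-cocycle-vanishing-Cartan}'' here, since that theorem assumes $\cG$ amenable; you must invoke the defining property of $\VCCartan$ directly.
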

\begin{proof}
Denote by $(X,\mu)$ the probability space of units of $\cG$. Choose, up to measure zero, countably many subsets $\cU_n \subset \cG$ such that $s|_{\cU_n}$ and $t|_{\cU_n}$ are bijections from $\cU_n$ onto $X$ and such that $\cG = \bigcup_n \cU_n$. Denote $\Gamma = \F_\infty$ with free generators $a_n \in \Gamma$, $n \in \N$. Choose a free pmp mixing action $\Gamma \actson (Y,\eta)$, e.g.\ a Bernoulli action. For every $n$, define the measure preserving automorphism $\vphi_n \in \Aut(X,\mu)$ given by $\vphi_n = t \circ (s|_{\cU_n})^{-1}$. Define the action $\Gamma \actson (X,\mu)$ given by
$$a_{2n} \cdot x = \vphi_n(x) \quad\text{and}\quad a_{2n+1} \cdot x = x \quad\text{for all}\;\; n \in \N, x \in X \; .$$
Then consider the diagonal action $\Gamma \actson X \times Y$ and denote by $\cS$ the orbit equivalence relation. Denote by $\pi : \cS \recht \cG$ the unique groupoid morphism satisfying
$$\pi\bigl((x,y),(x,y)\bigr) = x \;\; , \;\; \pi\bigl(a_{2n} \cdot (x,y),(x,y)\bigr) = (s|_{\cU_n})^{-1}(x) \quad\text{and}\quad \pi\bigl(a_{2n+1} \cdot (x,y),(x,y)\bigr) = x \; .$$
The kernel of $\pi$ is given by a field of orbit equivalence relations $\bigl(\cR(\Lambda_x \actson Y)\bigr)_{x \in X}$, where $a_{2n+1} \in \Lambda_x$ for all $x \in X$, $n \in \N$. Then $\pi$ induces a cocycle action of $\cG$ on this field of ergodic type II$_1$ equivalence relations.

Since $\cG \in \VCCartan$, this cocycle action can be chosen to be a genuine action. This means that we find a Borel family of measure preserving automorphisms $\cG \ni g \mapsto \theta_g \in \Aut(Y,\eta)$ satisfying the following properties.
$$\theta_g \circ \theta_h = \theta_{gh} \;\; , \;\; (t(g),\theta_g(y)) \sim_\cS (s(g),y) \;\; , \;\; \pi\bigl( (t(g),\theta_g(y)) , (s(g),y) \bigr) = g$$
for all $(g,h) \in \cG^{(2)}$ and $y \in Y$.

It follows that $\cS_0 := \bigl\{ \bigl((t(g),\theta_g(y)) , (s(g),y)\bigr) \mid g \in \cG \bigr\}$ is a subequivalence relation of $\cS$. Since $\cS$ is treeable, it follows from \cite[Th\'{e}or\`{e}me 5]{G99} that also $\cS_0$ is treeable. The restriction of $\pi$ maps $\cS_0$ onto $\cG$ and maps $\cS \cdot (x,y) \times \{(x,y)\}$ bijectively onto $s^{-1}(x)$. By definition, $\cG$ is treeable.
\end{proof}

Recall that a countable group $\Gamma$ is called strongly treeable if for \emph{every} free pmp action $\Gamma \actson (X,\mu)$, the orbit equivalence relation is treeable. It is a wide open problem whether every treeable group is strongly treeable.

\begin{proposition}\label{prop.all-kinds-treeability}
The following two statements are equivalent (see Remark \ref{rem.weakly-treeable} for a disambiguation of terminology).
\begin{enumlist}
\item Every treeable countable group is strongly treeable.
\item If a countable pmp equivalence relation $\cR$ is weakly treeable and is the orbit relation of a free action of a countable group, then $\cR$ is treeable.
\end{enumlist}
\end{proposition}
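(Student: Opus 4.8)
The plan is to prove both implications, with the forward direction being essentially trivial and the reverse direction requiring a modification of the group action.

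\emph{Proof of (1) $\Rightarrow$ (2).} Suppose every treeable countable group is strongly treeable and let $\cR$ be a weakly treeable countable pmp equivalence relation that arises as the orbit relation of a free pmp action $\Gamma \actson (X,\mu)$ of a countable group $\Gamma$. By the description in Remark \ref{rem.weakly-treeable}, there is a treeable countable pmp equivalence relation $\cS$ on $(Y,\eta)$ and a measure preserving factor map $\pi : Y \recht X$ mapping $\cS \cdot y$ bijectively onto $\cR \cdot \pi(y)$ for a.e.\ $y$. Pulling back the $\Gamma$-action along $\pi$, we obtain a pmp action $\Gamma \actson (Y,\eta)$ whose orbit relation is contained in $\cS$; since $\pi$ is a bijection on orbits, this action is free and its orbit relation is exactly $\cS$. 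Hence $\cS$ is the orbit relation of a free action of $\Gamma$, so $\Gamma$ is treeable, hence strongly treeable by hypothesis. In particular the original action $\Gamma \actson (X,\mu)$ has treeable orbit relation, i.e.\ $\cR$ is treeable.

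\emph{Proof of (2) $\Rightarrow$ (1).} Suppose that every weakly treeable equivalence relation that is the orbit relation of a free group action is treeable. Let $\Gamma$ be a treeable countable group, witnessed by some free pmp action $\Gamma \actson (X_0,\mu_0)$ with treeable orbit relation $\cS$, and let $\Gamma \actson (X,\mu)$ be an arbitrary free pmp action with orbit relation $\cR$; we must show $\cR$ is treeable. Consider the diagonal action $\Gamma \actson (X_0 \times X,\mu_0 \times \mu)$, which is free, and denote its orbit relation by $\cR'$. The second-coordinate projection $X_0 \times X \recht X$ is a measure preserving factor map sending each $\cR'$-orbit bijectively onto the corresponding $\cR$-orbit (using freeness of $\Gamma \actson X$, so that distinct group elements give distinct points), so $\cR'$ is an extension of $\cR$ in the sense of Remark \ref{rem.weakly-treeable}; conversely, the first-coordinate projection $X_0 \times X \recht X_0$ sends each $\cR'$-orbit bijectively onto the corresponding $\cS$-orbit, and since $\cS$ is treeable this exhibits $\cR'$ as weakly treeable. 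Moreover $\cR'$ is itself the orbit relation of a free group action, namely the diagonal $\Gamma$-action. By hypothesis (2), $\cR'$ is treeable. Finally, $\cR'$ maps onto $\cR$ by a factor map that is injective on orbits; since treeability passes to such quotients (this is the content of \cite[Th\'eor\`eme 5]{G99}, applied as in the proof of Proposition \ref{prop.VCCartan-implies-treeable}: the image of a treeing of $\cR'$ under an orbit-bijective factor map is a treeing of $\cR$), we conclude that $\cR$ is treeable. Hence $\Gamma$ is strongly treeable.

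The routine verifications are the freeness of the pulled-back and diagonal actions and the identification of orbit relations with orbit relations along factor maps; the only genuinely substantive input is that treeability descends along orbit-equivalence-preserving factor maps, for which we invoke Gaboriau's result as used above. Thus the main (though still mild) point is recognizing that the diagonal action with the treeable model provides simultaneously a weakly-treeable structure and an orbit-bijective quotient back onto the original relation.
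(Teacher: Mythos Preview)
Your argument for $(1)\Rightarrow(2)$ is correct and matches the paper's.

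In $(2)\Rightarrow(1)$ there is a genuine gap at the very last step. You reach the point where $\cR'$ (the orbit relation of the diagonal action on $X_0\times X$) is treeable, and then claim that treeability \emph{descends} along the orbit-bijective factor map $X_0\times X\to X$, citing \cite[Th\'eor\`eme~5]{G99}. That theorem says that a \emph{subequivalence relation} of a treeable pmp relation is treeable; it says nothing about quotients along orbit-bijective factor maps. In fact the assertion ``$\cR'$ treeable and $\cR'\to\cR$ orbit-bijective factor map $\Rightarrow$ $\cR$ treeable'' is exactly the statement that weak treeability implies treeability, which is precisely what is at issue in the proposition --- so invoking it here begs the question. (Concretely, pushing a treeing of $\cR'$ forward through the second projection produces, over each $x\in X$, the union of many different trees indexed by the fiber $X_0\times\{x\}$; there is no reason this union is a tree.)

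There is also a small confusion earlier in the same direction: the first projection $X_0\times X\to X_0$ does not exhibit $\cR'$ as \emph{weakly} treeable (the arrow points the wrong way for the definition). What it does show, by simply pulling back the treeing of $\cS$, is that $\cR'$ is actually \emph{treeable}; your detour through hypothesis~(2) at this stage is unnecessary.

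The fix is to rearrange where you use hypothesis~(2). Once $\cR'$ is treeable (by lifting the treeing of $\cS$ along the first projection), the \emph{second} projection $X_0\times X\to X$ is an orbit-bijective factor map from a treeable relation onto $\cR$; by definition this shows that $\cR$ is \emph{weakly} treeable. Since $\cR$ is the orbit relation of the free action $\Gamma\actson X$, hypothesis~(2) applies directly to $\cR$ and gives that $\cR$ is treeable. This is exactly how the paper proceeds.
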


If one believes that there exist treeable groups that are not strongly treeable, it could therefore be easier to first find weakly treeable, but non treeable equivalence relations.

\begin{proof}
1 $\Rightarrow$ 2. Assume that $\cR = \cR(\Gamma \actson X)$ for some free pmp action $\Gamma \actson (X,\mu)$ and assume that $\cR$ is weakly treeable. Choose a pmp action of $\cR$ such that its orbit equivalence relation $\cS$ on $(Y,\eta)$ is treeable. Denote by $\pi : Y \recht X$ the corresponding measure preserving factor map that maps $\cS$-orbits bijectively onto $\cR$-orbits. Since $\pi$ is bijective on orbits, there is a unique action $\Gamma \actson Y$ such that $g \cdot y \in \cS \cdot y$ for all $g \in \Gamma$, $y \in Y$ and $\pi(g \cdot y) = g \cdot \pi(y)$. By construction, the action $\Gamma \actson (Y,\eta)$ is free and pmp, and $\cS$ is its orbit equivalence relation.

Since $\cS$ is a treeable equivalence relation, it follows that $\Gamma$ is a treeable group. Since 1 holds, $\Gamma$ is strongly treeable and it follows that also $\cR$ is a treeable equivalence relation.

2 $\Rightarrow$ 1. Let $\Gamma$ be a countable group and let $\Gamma \actson (X,\mu)$, $\Gamma \actson (Y,\eta)$ be free pmp actions. Assume that the orbit equivalence relation $\cR(\Gamma \actson X)$ is treeable. We have to prove that also $\cR(\Gamma \actson Y)$ is a treeable equivalence relation. Using the factor map $X \times Y \recht X$, the treeing of $\cR(\Gamma \actson X)$ can be lifted to a treeing of the orbit equivalence relation $\cR(\Gamma \actson X \times Y)$ for the diagonal action. Using the factor map $X \times Y \recht Y$, we conclude that $\cR(\Gamma \actson Y)$ is weakly treeable. Since 2 holds, $\cR(\Gamma \actson Y)$ is treeable.
\end{proof}

In the proofs of Theorems \ref{thm.two-cocycle-vanishing}, \ref{thm.two-cocycle-vanishing-Cartan} and \ref{thm.ocneanu-eq-rel-groupoid}, we used the following result: if $\cG$ is a discrete measured groupoid such that the associated countable equivalence relation $\cR$ on $\cG^{(0)}$ is amenable, then $\cG$ can be written as the semidirect product of $\cR$ acting on the field of isotropy groups of $\cG$. Also this is a $2$-cohomology vanishing result, for cocycle actions on fields of countable groups. We finally prove that the same $2$-cohomology holds for arbitrary treeable equivalence relations and is a characterization of treeability.

\begin{proposition}\label{prop.semidirect-product}
Let $\cR$ be a countable nonsingular equivalence relation on the standard probability space $(X,\mu)$. The following two statements are equivalent.
\begin{enumlist}
\item $\cR$ is treeable.
\item Every discrete measured groupoid $\cG$ with $\cG^{(0)} = X$ and with $\cR = \{(t(g),s(g)) \mid g \in \cG\}$ can be written as a semidirect product of $\cR$ acting on the field of isotropy groups of $\cG$.
\end{enumlist}
\end{proposition}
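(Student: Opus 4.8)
The plan is to prove the two implications separately: the forward one is a direct construction, and the converse rests on a covering-space (measurable Nielsen--Schreier) argument.

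\emph{(1) $\Rightarrow$ (2).} Fix a treeing $\cT \subset \cR$, so that for a.e.\ $x$ the graph $\cT$ restricted to the $\cR$-class of $x$ is a tree on that class, and let $\cG$ be an arbitrary discrete measured groupoid with $\cG^{(0)} = X$ and $\cR = \{(t(g),s(g)) \mid g \in \cG\}$. First I would Borel-orient $\cT$ and use a measurable selection (legitimate since $s,t$ are countable-to-one) to pick, for each edge $(x,y) \in \cT$, an element $q(x,y) \in \cG$ with $t(q(x,y)) = x$, $s(q(x,y)) = y$, and $q(y,x) := q(x,y)^{-1}$. Then I would extend $q$ to all of $\cR$ by setting $q(x,x) = 1_x$ and, for $(x,y) \in \cR$ with unique reduced $\cT$-path $x = z_0,z_1,\dots,z_n = y$ inside the tree on their class, putting $q(x,y) = q(z_0,z_1)\cdots q(z_{n-1},z_n)$. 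Because the classes are trees, the reduced $\cT$-path from $x$ to $z$ is the reduction of the concatenation of the paths $x\to y$ and $y\to z$; since $q(a,b)q(b,a) = 1_a$ and units are absorbed in products, the defining products telescope and $q(x,y)q(y,z) = q(x,z)$. Thus $q : \cR \recht \cG$ is a measurable groupoid morphism lifting the canonical morphism $\cG \recht \cR$, which (as recalled in the proof of Theorem~\ref{thm.two-cocycle-vanishing}) is exactly a decomposition of $\cG$ as the semidirect product of $\cR$ acting on its field of isotropy groups. This is the same mechanism used there for amenable $\cR$, with trees in place of finite orbits and $\Z$-orbits, and I expect no real difficulty here.

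\emph{(2) $\Rightarrow$ (1).} The idea is to feed into hypothesis (2) a particular groupoid over $\cR$ whose splitting forces a treeing, mirroring the group-theoretic fact that $\Gamma$ is free as soon as the canonical extension $1 \to N \to F \to \Gamma \to 1$ by a free group splits. I would choose a countable family of Borel partial isomorphisms of $(X,\mu)$ whose graphs generate $\cR$ --- equivalently, such that for a.e.\ $x$ the associated graph $\cG_x$ on the $\cR$-class of $x$ is connected --- and let $\cG$ be the groupoid of reduced edge-paths of this graphing: $\cG^{(0)} = X$, $\Mor_\cG(x,y)$ the set of reduced edge-paths $x\to y$ in $\cG_x$, with composition given by concatenation-and-reduction. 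Then $\cG$ is a discrete measured groupoid (a countable disjoint union, over combinatorial path types, of the Borel sets on which the corresponding word is defined), its associated equivalence relation is $\cR$, and its isotropy group at $x$ is the fundamental group $\pi_1(\cG_x,x)$, a countable free group. Hypothesis (2) applied to this $\cG$ gives a measurable morphism section $q : \cR \recht \cG$, and $q$ identifies $\cR$ with the wide subgroupoid $\cG_0 = q(\cR) \subset \cG$, which, being isomorphic to $\cR$, has trivial isotropy.

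The remaining step --- and the one I expect to be the real obstacle --- is to conclude that $\cR \cong \cG_0$ is treeable, i.e.\ a measurable Nielsen--Schreier statement: a wide subgroupoid of the free groupoid $\cG$ is again free. Fiberwise over an $\cR$-class $\cO$, the inclusion $\cG_0|_\cO \subset \cG|_\cO$ of a wide subgroupoid into the fundamental groupoid of the connected graph $\cG_\cO$ is realized, by covering space theory, by a covering $\widehat{\cG_\cO}\recht\cG_\cO$ of graphs; triviality of the isotropy of $\cG_0|_\cO$ forces $\widehat{\cG_\cO}$ to be simply connected, hence a tree, so $\cG_0|_\cO$ is the fundamental groupoid of a tree with vertex set $\cO$. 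Assembled over $X$, these trees form a Borel forest that is a treeing of $\cR$. The delicate point is to carry out this covering-space construction measurably, so that the forest is Borel; this is the measurable version of Bass--Serre theory and lies in the same circle of ideas as Gaboriau's stability results for treeable equivalence relations~\cite{G99}. A more hands-on but equivalent route: realize $\cR = \cR(\F_\infty\actson X)$ by Feldman--Moore together with a surjection $\F_\infty\twoheadrightarrow\Gamma$, apply (2) to the action groupoid $\F_\infty\ltimes X$ to obtain an $\F_\infty$-valued $1$-cocycle $c$ on $\cR$ with $c(y,x)\cdot x = y$, and build the treeing from $c$ and the Cayley tree of $\F_\infty$; there the difficulty resurfaces as the need to cope with the non-freeness of the $\F_\infty$-action on $X$, which is precisely what the covering-space picture organizes.
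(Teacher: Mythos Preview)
Your argument for (1)$\Rightarrow$(2) is correct; it is the explicit form of what the paper does (the paper phrases the same construction as writing the treeable $\cR$ as a free product of amenable subequivalence relations, lifting the quotient $\cG\to\cR$ on each factor, and combining by freeness --- your path-product formula is exactly that lift spelled out edge by edge).

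For (2)$\Rightarrow$(1), your route (b) is precisely the paper's, and you have correctly isolated the obstacle: applying (2) to the action groupoid $\F_\infty\ltimes X$ yields a cocycle $\omega:\cR\to\F_\infty$ with $\omega(x,y)\cdot y=x$, but the non-freeness of $\F_\infty\actson X$ prevents this from directly giving a treeing. You stop here with an appeal to measurable Bass--Serre theory; the paper resolves it with a short explicit construction, and this is the step your proposal is missing. On $X\times\F_\infty$ define $\cS$ by $(x,g)\sim_\cS(y,h)\Leftrightarrow x\sim_\cR y$, and $\cT\subset\cS$ by additionally requiring $g=\omega(x,y)h$. Since $\omega(x,y)=e$ forces $x=y$, the relation $\cT$ is trivial on each slice $X\times\{g\}$, hence smooth with a Borel fundamental domain $Y\subset X\times\F_\infty$. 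Writing $\pi$ for the $\cT$-projection to $Y$, the formula $h\cdot(x,g):=\pi(x,gh^{-1})$ defines a \emph{free} $\F_\infty$-action on $Y$ whose orbit relation is exactly $\cS|_Y$; so $\cS|_Y$ is treeable. Since $Y$ and $X\times\{e\}$ are both complete Borel sections of $\cS$, the stability results \cite[Proposition 3.3]{JKL01} (or \cite{G99} in the pmp case) give that $\cS$, and then $\cR\cong\cS|_{X\times\{e\}}$, are treeable. This \emph{is} the measurable covering-space argument you allude to, done in three lines.

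Your route (a) is a repackaging of (b) --- the reduced-path groupoid of the graphing by $x\mapsto a_n\cdot x$ \emph{is} the action groupoid $\F_\infty\ltimes X$ --- but the sentence ``$\cG_0|_\cO$ is the fundamental groupoid of a tree with vertex set $\cO$'' does not hold as stated. The covering of $\cG_\cO$ determined by a trivial-isotropy wide subgroupoid is the universal cover, whose vertex set is in bijection with $\cO\times\pi_1(\cG_\cO,x_0)$, not $\cO$; what the section $q$ actually gives you is a choice of one lift in that cover per vertex of $\cO$, i.e.\ a fundamental domain for the deck group. Turning that data into a Borel treeing of $\cR$, measurably over $X$, is exactly what the $X\times\F_\infty$ construction above accomplishes.
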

\begin{proof}
1 $\Rightarrow$ 2. Since $\cR$ is treeable, we can decompose $\cR$ as a free product of amenable sub\-equivalence relations. On each of these, the quotient map $\cG \recht \cR$ can be lifted as a groupoid homomorphism and these lifts can be combined into a well defined groupoid homomorphism $\cR \recht \cG$ by freeness.

2 $\Rightarrow$ 1. Denote $\Gamma = \F_\infty$ and choose a nonsingular action $\Gamma \actson (X,\mu)$ such that $\cR = \{(g \cdot x,x)\mid g \in \Gamma, x \in X\}$. Define the action groupoid $\cG = \Gamma \times X$ with $\cG^{(0)} = X$, $s(g,x) = x$, $t(g,x) = g \cdot x$ and $(g,h\cdot x) \cdot (h,x) = (gh,x)$. Since 2 holds, there is a groupoid homomorphism $\cR \recht \cG$ lifting the quotient map $\cG \recht \cR$. So there exists a cocycle $\om : \cR \recht \Gamma$ such that $\om(x,y) \cdot y = x$ for a.e.\ $(x,y) \in \cR$. Removing from $X$ a Borel set of measure zero, we may assume that $\om$ is a Borel cocycle and that $\om(x,y)\cdot y = x$ for all $(x,y) \in \cR$.

Define on $X \times \Gamma$ the countable Borel equivalence relation $\cS$ given by $(x,g) \sim_\cS (y,h)$ if and only if $x \sim_\cR y$. Define the subequivalence relation $\cT \subset \cS$ given by $(x,g) \sim_\cT (y,h)$ if and only if $x \sim_\cR y$ and $g = \om(x,y)h$. Since the restriction of $\cT$ to each of the subsets $X \times \{g\}$ is the trivial equivalence relation, it follows that $\cT$ admits a Borel fundamental domain $Y \subset X \times \Gamma$. Define the Borel map $\pi : X \times \Gamma \recht Y$ such that $\pi(x,g) \sim_\cT (x,g)$. Then define the action of $\Gamma$ on $Y$ given by $h \cdot (x,g) = \pi(x,gh^{-1})$. By construction, this action is free and its orbit equivalence relation coincides with $\cS|_Y$. So, $\cS|_Y$ is a treeable Borel equivalence relation. By \cite[Proposition 3.3]{JKL01} (see also \cite[Th\'{e}or\`{e}me 5 and Proposition 2.6]{G99} in the pmp setting), it first follows that $\cS$ is treeable and then that the restriction of $\cS$ to $X \times \{e\}$ is treeable. The latter being equal to $\cR$, the proposition is proven.
\end{proof}

\begin{remark}\label{rem.fixed-price}
Recall that a countable group $\Gamma$ is said to have fixed price if all orbit equivalence relations for free pmp actions of $\Gamma$ have the same cost, in the sense of \cite{G99}. The well known fixed price problem asks whether all countable groups have fixed price.

Let $\cR$ be a countable pmp equivalence relation on $(X,\mu)$. Whenever $\cS$ on $(Y,\eta)$ is the orbit relation for a free pmp action of $\cR$ (equivalently, there is a measure preserving factor map $Y \recht X$ that is bijective on a.e.\ orbit), any graphing for $\cR$ lifts to a graphing of $\cS$, so that $\cost(\cS) \leq \cost(\cR)$. We can therefore ask the following fixed price question for the equivalence relation $\cR$: do we have $\cost(\cS) = \cost(\cR)$ for each such orbit relation $\cS$~?

With precisely the same argument as in the proof of Proposition \ref{prop.all-kinds-treeability}, we get that the following two statements are equivalent.
\begin{enumlist}
\item Every countable group has fixed price.
\item Every countable pmp equivalence relation that is the orbit relation of a free action of a countable group, has fixed price.
\end{enumlist}

So exactly as above, if one believes that there exist countable groups that do not have fixed price, it might be easier to first find countable equivalence relations that do not have fixed price.

Note here as well that in the above context, the $L^2$-Betti numbers of $\cS$ and $\cR$ coincide. Indeed, viewing $\cS$ as the orbit relation of a free pmp action of $\cR$, one may repeat, mutatis mutandis, the proofs of \cite[Corollaire 3.16]{G01} or \cite[Theorem 5.5]{S03} (see also \cite[Remark 9.24]{PSV15} for an alternative proof in terms of Cartan inclusions). So a counterexample for the fixed price question for equivalence relations would also provide a counterexample for the equally wide open problem asking whether $\cost(\cR) = \beta_1^{(2)}(\cR)+1$ for every ergodic countable pmp equivalence relation with infinite orbits.
\end{remark}

\section{(Non-)classification of discrete amenable groupoids}\label{sec.examples-groupoids}

Theorems \ref{thm.classification-amenable} and \ref{thm.classification-amenable-Cartan} provide a complete classification of the regular von Neumann subalgebras $B$ of the hyperfinite II$_1$ factor $R$ satisfying $B' \cap R = \cZ(B)$ in terms of the associated discrete measured groupoid $\cG$. Every amenable discrete measured groupoid is the semi-direct product of a measurable field of amenable groups and an action of a countable amenable equivalence relation. Although there is a unique amenable ergodic equivalence relation of type II$_1$, the classification of amenable discrete groupoids is strictly more complex than the classification of amenable groups, as we show in the following examples. All this is meant in the descriptive set theoretic sense of the word (see e.g.\ \cite{H00}).

\subsection{Measurable fields of amenable groups}

Let $X_0$ be the uncountable standard Borel space and assume that we are given a Borel family $(\Gamma_x)_{x \in X_0}$ of amenable groups with the property that $\Gamma_x \not\cong \Gamma_y$ if $x \neq y$. More concretely, one could take $X_0 = \{0,1\}^\cP$, where $\cP$ denotes the set of prime numbers, and define for $x \in X_0$,
$$\Gamma_x = \bigoplus_{p \in \cP, x_p = 1} \Z / p \Z \; .$$
For every probability measure $\mu$ on $X_0$, we define $\cG(\mu)$ as the direct integral w.r.t.\ $\mu$ of the amenable groups $(\Gamma_x)_{x \in X_0}$. By construction, $\cG(\mu_1) \cong \cG(\mu_2)$ if and only if $\mu_1$ and $\mu_2$ are absolutely continuous. By \cite[Theorem 2.1]{KS99}, the nonatomic probability measures on $X_0$ (up to absolute continuity) are not classifiable by countable structures. Countable amenable groups form, by definition, a countable structure. In this sense, already these trivial examples show that the classification of amenable discrete measured groupoids is strictly harder than the classification of amenable groups.

\subsection{Ergodic amenable discrete groupoids}

In order to arise from regular inclusions $B \subset R$ with $R$ being the hyperfinite II$_1$ \emph{factor}, we need to consider ergodic groupoids $\cG$. Unclassifiably many ergodic amenable discrete groupoids can be constructed as follows. Again take $X_0 = \{0,1\}^\cP$, where $\cP$ denotes the set of prime numbers. We now define, for $x \in X_0$, the subgroup $\Lambda_x \subset \Q$ given by
$$\Lambda_x = \langle p^{-k} \mid p \in \cP, x_p = 1 , k \in \N \rangle \; .$$
One checks that for $x \neq y$, every group homomorphism $\Lambda_x \recht \Lambda_y$ is trivial. Then define $X = X_0^\Z$ and for every $x \in X$, put
$$\Gamma_x = \bigoplus_{n \in \Z} \Lambda_{x_n} \; .$$
Consider the Bernoulli action $\Z \actson X$ given by $(k \cdot x)_n = x_{n-k}$ and denote by $\delta_{k,x} : \Gamma_x \recht \Gamma_{k \cdot x}$, the natural group isomorphism.
We then define the Borel groupoid $\cG_0$ as the union of all $(\Gamma_x)_{x \in X}$ with unit space $X$, and we define $\cG$ as the semidirect product of $\cG_0$ and $\Z$, acting by the automorphisms that we just defined. For every probability measure $\mu$ on $X_0$, the product measure $\mu^\Z$ on $X$ is $\Z$-invariant, so that we obtain the ergodic, pmp, amenable, discrete measured groupoid $\cG(\mu)$.

We claim that again $\cG(\mu_1) \cong \cG(\mu_2)$ if and only if the measures $\mu_1$ and $\mu_2$ are absolutely continuous. So also the discrete measured groupoids that are ergodic, pmp and amenable are unclassifiable by countable structures. To prove the claim, it suffices to show the following statement: if $\cU \subset X_0$ is a Borel set such that $\mu_1(\cU) > 0$ and $\mu_2(\cU) = 0$, then for $\mu_1^\Z$-a.e.\ $x \in X$ and $\mu_2^\Z$-a.e.\ $y \in X$, we have that $\Gamma_x \not\cong \Gamma_y$. Fix such a Borel set $\cU \subset X_0$. For $\mu_1^\Z$-a.e.\ $x \in X$, we have that there exists an $n \in \Z$ with $x_n \in \cU$, so that one of the groups $\Lambda_a$, $a \in \cU$, is a quotient of $\Gamma_x$. For $\mu_2^\Z$-a.e.\ $y \in X$, we have that $y_n \not\in \cU$ for all $n \in \Z$, so that none of the groups $\Lambda_a$, $a \in \cU$, is a quotient of $\Gamma_y$. Then the claim is proven.

\subsection{Amenable discrete groupoids with constant isotropy groups}

In all the examples so far, we obtain ``many'' groupoids by exploiting that there are ``many'' amenable groups to choose from as isotropy groups $\Gamma_x$. But even the ergodic pmp amenable discrete groupoids $\cG$ with constant isotropy groups $\Gamma_x \cong H$ for a fixed amenable group $H$ can be unclassifiable. This however depends on the choice of $H$, as we prove in Corollary \ref{cor.classif-vs-non-classif}: when $H = \Z$, there are only countably many such groupoids $\cG$ up to isomorphism, while for $H = \Z^2$, we encode ergodic transformations of the interval $[0,1]$ up to conjugacy into such amenable groupoids up to isomorphism.

Fix a countable nonsingular equivalence relation $\cR$ on the standard probability space $(X,\mu)$ and fix a countable group $H$. For every $1$-cocycle
$$\delta : \cR \recht \Aut(H) : (x,y) \mapsto \delta_{(x,y)} \; ,$$
define the groupoid $\cG_\delta = H \times \cR$ with $\cG_\delta^{(0)} = X$ and
$$s(g,x,y) = y \;\; , \;\; t(g,x,y) = x \;\; \text{and} \;\; (g,x,y) \cdot (h,y,z) = (g \delta_{(x,y)}(h) , x,z) \; .$$
Note that $\cG_\delta$ is amenable if and only if both $H$ and $\cR$ are amenable.

Conversely, every discrete measured groupoid $\cG$ with the properties that the isotropy groups $\Gamma_x$ are isomorphic with a fixed group $H$ for a.e.\ $x \in X = \cG^{(0)}$ and that the associated equivalence relation on $(X,\mu)$ is amenable, is isomorphic with $\cG_\delta$ for some $1$-cocycle $\delta : \cR \recht \Aut(H)$.

As in \cite[Definition 1.2]{BG90}, given a Polish group $G$, the $1$-cocycles $\delta, \delta' : \cR \recht G$ are called \emph{weakly equivalent} if there exists a $\Delta \in \Aut(\cR)$ such that $\delta$ is cohomologous with $\delta' \circ \Delta$. More concretely, $\Delta$ and $\Delta'$ are weakly equivalent if and only if there exists a nonsingular automorphism $\Delta \in \Aut(X,\mu)$ and a Borel map $\vphi : X \recht G$ such that
\begin{align*}
& \Delta(\cR \cdot x) = \cR \cdot \Delta(x) \;\;\text{for a.e.\ $x \in X$, and}\\
& \delta'(\Delta(x),\Delta(y)) = \vphi(x) \, \delta(x,y) \, \vphi(y)^{-1}\;\;\text{for a.e.\ $(x,y) \in \cR$.}
\end{align*}

\begin{proposition}\label{prop.isom-vs-weak-equiv}
Let $\cR$ be an amenable countable nonsingular equivalence relation and let $H$ be a countable group. Define the Polish group $G = \Aut(H)/\overline{\Inn(H)}$ and denote by $p : \Aut(H) \recht G$ the quotient map.
\begin{enumlist}
\item Let $\delta, \delta' : \cR \recht \Aut(H)$ be $1$-cocycles. Then $\cG_\delta$ is isomorphic with $\cG_{\delta'}$ if and only if the $G$-valued $1$-cocycles $p \circ \delta$ and $p \circ \delta'$ are weakly equivalent.
\item For every $1$-cocycle $\delta_0 : \cR \recht G$, there exists a $1$-cocycle $\delta : \cR \recht \Aut(H)$ satisfying $\delta_0 = p \circ \delta$.
\end{enumlist}
\end{proposition}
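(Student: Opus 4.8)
The plan is to prove the lifting statement (2) first, by a hyperfiniteness argument, and then to deduce (1) from a direct groupoid computation together with the same input.

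\emph{For (2).} Since $\cR$ is amenable it is hyperfinite by \cite{CFW81}, so I would write $\cR = \bigcup_n \cR_n$ with $\cR_0 \subset \cR_1 \subset \cdots$ finite subequivalence relations and $\cR_0$ trivial, and fix a Borel section $\rho : G \recht \Aut(H)$ of $p$ with $\rho(e)=e$ (measurable selection). Then I would build $\delta$ inductively: put $\delta_{(x,x)}=e$ on $\cR_0$, and given a $1$-cocycle $\delta$ on $\cR_n$ with $p\circ\delta = \delta_0$ there, fix a fundamental domain $X_{n+1}\subset X_n$ for $\cR_{n+1}$ inside a fundamental domain $X_n$ for $\cR_n$; for $y\in X_{n+1}$ let $y=z_1,\ldots,z_m$ be the points of $X_n$ in the $\cR_{n+1}$-class of $y$, set $\widetilde\delta(z_i,y)=\rho(\delta_0(z_i,y))$ and $\widetilde\delta(y,y)=e$, and extend to $\cR_{n+1}$ by $\delta_{(x,x')} := \delta_{(x,z_i)}\,\widetilde\delta(z_i,y)\,\widetilde\delta(z_j,y)^{-1}\,\delta_{(z_j,x')}$ whenever $x\sim_{\cR_n}z_i$ and $x'\sim_{\cR_n}z_j$. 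A short computation (using $\delta_{(z,x')}\delta_{(x',z)}=e$ and that $\rho$ is a section) shows this is a $1$-cocycle on $\cR_{n+1}$ restricting to $\delta$ on $\cR_n$ with $p\circ\delta = \delta_0$; the union over $n$ is the required lift.

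\emph{For the ``only if'' part of (1),} given an isomorphism $F:\cG_\delta \recht \cG_{\delta'}$, I would restrict it to unit spaces (a nonsingular $\Delta\in\Aut(X,\mu)$, which carries the equivalence relation of $\cG_\delta$, namely $\cR$, to that of $\cG_{\delta'}$, so $\Delta(\cR\cdot x)=\cR\cdot\Delta(x)$) and to the isotropy group over $x$ (an automorphism of $H$, giving a measurable field $\psi_x\in\Aut(H)$). Writing $F(e,x,y)=(\vphi(x,y),\Delta x,\Delta y)$ with $\vphi:\cR\recht H$ Borel, the relation $(g,x,x)(e,x,y)=(g,x,y)$ forces $F(g,x,y)=(\psi_x(g)\vphi(x,y),\Delta x,\Delta y)$, and substituting into $F\bigl((e,x,y)(h,y,y)\bigr)=F(e,x,y)\,F(h,y,y)$ yields
$$\psi_x \circ \delta_{(x,y)} = \Inn(\vphi(x,y)) \circ \delta'_{(\Delta x,\Delta y)} \circ \psi_y \qquad\text{for a.e. }(x,y)\in\cR,$$
where $\Inn(h)\in\Aut(H)$ is conjugation by $h$. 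Applying $p$ and using $\Inn(H)\subset\overline{\Inn(H)}=\ker p$ gives $(p\circ\delta')_{(\Delta x,\Delta y)} = p(\psi_x)\,(p\circ\delta)_{(x,y)}\,p(\psi_y)^{-1}$, i.e.\ $p\circ\delta$ and $p\circ\delta'$ are weakly equivalent via $\Delta$ and transfer function $x\mapsto p(\psi_x)$.

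\emph{For the ``if'' part of (1),} given weak equivalence I would first reduce: replacing $\delta'$ by $(x,y)\mapsto\delta'_{(\Delta x,\Delta y)}$ (an isomorphic groupoid, via $(g,x,y)\mapsto(g,\Delta x,\Delta y)$) we may assume $\Delta=\id$; then lifting the transfer function $\vphi_0:X\recht G$ through $p$ to a Borel $\psi:X\recht\Aut(H)$ and replacing $\delta'$ by the cohomologous cocycle $(x,y)\mapsto\psi_x^{-1}\delta'_{(x,y)}\psi_y$ (again isomorphic, via $(g,x,y)\mapsto(\psi_x^{-1}(g),x,y)$) we are reduced to $p\circ\delta = p\circ\delta'$, i.e.\ $c(x,y):=\delta'_{(x,y)}\delta_{(x,y)}^{-1}$ takes values in the closed normal subgroup $\overline{\Inn(H)}$ of $\Aut(H)$ and is a $1$-cocycle for the $\cR$-action $(x,y)\mapsto\Ad(\delta_{(x,y)})$. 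It then remains to produce a Borel $\rho:X\recht\Aut(H)$ and a $\delta'$-cocycle $\vphi:\cR\recht H$ with $\rho_x\circ\delta_{(x,y)}=\Inn(\vphi(x,y))\circ\delta'_{(x,y)}\circ\rho_y$, for then $(g,x,y)\mapsto(\rho_x(g)\vphi(x,y),x,y)$ is a well-defined isomorphism $\cG_\delta\recht\cG_{\delta'}$ (the reverse of the computation above). This last step is the crux and the place where amenability of $\cR$ is genuinely needed: it splits into (a) modifying $c$ by a coboundary so that it becomes $\Inn(H)$-valued, which I would do by an inductive construction along $\cR=\bigcup_n\cR_n$ in the spirit of (2) and of Theorem \ref{thm.equivariant-section} (exploiting the freedom in $\rho$ and density of $\Inn(H)$ in $\overline{\Inn(H)}$, with a re-indexation/limit argument to pass from approximate to exact, as elsewhere in the paper); and (b) lifting the resulting $\Inn(H)\cong H/\cZ(H)$-valued cocycle to an $H$-valued $\delta'$-cocycle $\vphi$, whose obstruction is a $2$-cocycle on $\cR$ with values in $\cZ(H)$ and hence a coboundary because $\cR$ is hyperfinite — exactly the mechanism used in the proof of Theorem \ref{thm.two-cocycle-vanishing-Cartan} for scalar $2$-cocycles. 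Step (a) is the subtlest point, since $\Inn(H)$ need not be closed in $\Aut(H)$; making the measurable choices cohere in the limit is where the argument will require the most care.
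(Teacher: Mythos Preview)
Your outline is correct and shares the same architecture as the paper---the direct groupoid computation characterizing isomorphisms $\cG_\delta\cong\cG_{\delta'}$, followed by an appeal to hyperfiniteness to manufacture the needed maps---but it is organized differently in the ``if'' direction of (1), and the paper's route is shorter.

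For (2), the paper takes the shortcut of writing $\cR$ (off its type~I part) as the orbit relation of a single free $\Z$-action $T$, lifts $\delta_0(Tx,x)$ through a Borel section of $p$, and extends by the cocycle rule; your inductive construction along a tower $\cR=\bigcup_n\cR_n$ accomplishes the same thing with a bit more bookkeeping.

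For the ``if'' direction of (1), after the same reduction to $p\circ\delta=p\circ\delta''$ the paper again works on the single generator $(Tx,x)$ and invokes the Jones--Takesaki cohomology lemma \cite[Appendix]{Jones-Takesaki-82} directly: one modifies $\theta$ and finds $\eta:X\recht H$ with $\Ad\eta(x)\circ\delta_{(Tx,x)}=\delta''_{(Tx,x)}$, and then the $H$-valued map $\om$ is \emph{defined} by the $\delta$-cocycle relation with $\om(Tx,x)=\eta(x)$. Because one is extending freely from a $\Z$-generator, no $2$-cocycle obstruction arises. Your step~(a) is exactly the content of the Jones--Takesaki lemma (an $\overline{\Inn(H)}$-valued $1$-cocycle on an amenable $\cR$ is cohomologous to one with values in the dense subgroup $\Inn(H)$), which you propose to reprove via a tower argument rather than cite; your step~(b)---lifting from $\Inn(H)\cong H/\cZ(H)$ to $H$ and killing the resulting $\cZ(H)$-valued $2$-cocycle by hyperfiniteness---is valid but is simply bypassed by the paper's single-generator trick. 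So your two-step decomposition (a)+(b) is sound, while the paper collapses both into one citation of \cite{Jones-Takesaki-82} plus a free extension on generators; the trade-off is that your version is more self-contained and makes the cohomological structure explicit, at the cost of having to carry out the limit argument in step~(a) that you correctly flag as the delicate point.
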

\begin{proof}
1.\ A direct computation gives that $\cG_\delta \cong \cG_{\delta'}$ if and only if there exists a $\Delta \in \Aut(\cR)$ and Borel maps
$$\om : \cR \recht H : (x,y) \mapsto \om(x,y) \quad\text{and}\quad \theta : X \recht \Aut(H) : x \mapsto \theta_x$$
such that
\begin{align}
& \om(x,z) = \om(x,y) \, \delta_{(x,y)}(\om(y,z)) \quad\text{for a.e.\ $(x,y),(y,z) \in \cR$,} \label{first.prop}\\
& \Ad \om(x,y) \circ \delta_{(x,y)} = \theta_x \circ \delta'_{(\Delta(x),\Delta(y))} \circ \theta_y^{-1} \quad\text{for a.e.\ $(x,y) \in \cR$,}\label{second.prop}
\end{align}
and with the isomorphism $\theta : \cG_{\delta'} \recht \cG_\delta$ given by $\theta(g,\Delta(x),\Delta(y)) = (\theta_x(g) \, \om(x,y) , x, y)$. So if $\cG_\delta \cong \cG_{\delta'}$, we immediately get that $p \circ \delta$ and $p \circ \delta'$ are weakly equivalent.

Conversely, assume that $p \circ \delta$ and $p \circ \delta'$ are weakly equivalent. Since $\cR$ is amenable, we can write, up to measure zero, $X = X_1 \sqcup X_2$ so that the restriction of $\cR$ to $X_1$ is of type I, while the restriction of $\cR$ to $X_2$ is the orbit relation of a free action of $\Z$ implemented by $T \in \Aut(X_2,\mu)$. Every $1$-cocycle for $\cR|_{X_1}$ is cohomologous to the trivial $1$-cocycle, so that the restrictions of $\cG_\delta$ and $\cG_{\delta'}$ to $X_1$ are both isomorphic with the direct product of $H$ and $\cR|_{X_1}$. We may thus assume that $X = X_2$.

Since $p \circ \delta$ and $p \circ \delta'$ are weakly equivalent, using a Borel lift $G \recht \Aut(H)$ for the quotient homomorphism $p$, we find an automorphism $\Delta \in \Aut(\cR)$ and a Borel map $\theta : X \recht \Aut(H) : x \mapsto \theta_x$ such that the $1$-cocycle $\delta\dpr$ defined by
$$\delta\dpr(x,y) = \theta_x \circ \delta'_{(\Delta(x),\Delta(y))} \circ \theta_y^{-1}$$
satisfies $p \circ \delta = p \circ \delta\dpr$. By \cite[Appendix]{Jones-Takesaki-82}, because $\cR$ is amenable, we can modify $\theta$ and choose a Borel map $\eta : X \recht H$ such that
\begin{equation}\label{eq.we-need}
\Ad \eta(x) \circ \delta_{(Tx,x)} = \theta_{Tx} \circ \delta'_{(\Delta(Tx),\Delta(x))} \circ \theta_x^{-1}
\end{equation}
for a.e.\ $x \in X$. Uniquely define $\om : \cR \recht H$ such that \eqref{first.prop} holds and $\om(Tx,x) = \eta(x)$ for all $x \in X$. Then \eqref{second.prop} follows from \eqref{eq.we-need} and we obtain that $\cG_\delta \cong \cG_{\delta'}$.

2.\ Let $\delta_0 : \cR \recht G$ be a $1$-cocycle. Also here, we may assume that $\cR$ is the orbit relation of a free action of $\Z$ implemented by $T \in \Aut(X,\mu)$. Using a Borel lift $G \recht \Aut(H)$, choose a Borel function $\eta : X \recht \Aut(H)$ such that $p(\eta(x)) = \delta_0(Tx,x)$ for a.e.\ $x \in X$. Define $\delta : \cR \recht \Aut(H)$ as the unique $1$-cocycle satisfying $\delta(Tx,x) = \eta(x)$ for all $x \in X$. It follows that $p \circ \delta = \delta_0$.
\end{proof}

To every $1$-cocycle $\delta : \cR \recht G$ with values in a lcsc group $G$ is associated the \emph{Mackey action,} defined as follows. We equip $G$ with its left or right Haar measure and consider on $X \times G$ the equivalence relation $\cR_\delta$ given by $(x,g) \sim_{\cR_\delta} (y,h)$ if and only if $x \sim_\cR y$ and $g = \delta(x,y) h$. The group $G$ acts by automorphisms of $\cR_\delta$ given by $g \cdot (x,h) = (x,hg^{-1})$. We define the nonsingular action $G \actson (Y,\eta)$ in such a way that $L^\infty(Y)$ can be identified with the $\cR_\delta$-invariant functions in $L^\infty(X \times G)$. If $\cR$ is ergodic, the action $G \actson (Y,\eta)$ is ergodic. It is called the \emph{Mackey action} of the $1$-cocycle $\delta$. Clearly, weakly equivalent $1$-cocycles give rise to conjugate Mackey actions. When $\cR$ is amenable and pmp, by \cite[Theorem 3.1]{GS91}, the converse holds for recurrent $1$-cocycles. Here, $\delta$ is called recurrent if $\cR_\delta$ is not of type I. In this way, we get the following corollary to Proposition \ref{prop.isom-vs-weak-equiv}.

\begin{corollary}\label{cor.classif-vs-non-classif}
Fix a countable group $H$. Consider all discrete measured groupoids $\cG$ such that almost every isotropy group is isomorphic with $H$ and such that the associated equivalence relation $\cR$ is the unique ergodic hyperfinite type II$_1$ equivalence relation. Write $G = \Aut(H) / \overline{\Inn(H)}$.
\begin{enumlist}
\item If $G$ is a compact group, the groupoids $\cG$ are concretely classifiable in terms of the space of closed subgroups of $G$ up to conjugacy.
\item If $G$ is discrete and contains a copy of $\Z$, the groupoids $\cG$ are not classifiable by countable structures: to every weakly mixing measure preserving transformation $T \in \Aut([0,1])$, we can associate such a groupoid $\cG_T$ such that $\cG_T \cong \cG_S$ if and only if the transformations $T,S$ are (flip) conjugate.
\end{enumlist}
\end{corollary}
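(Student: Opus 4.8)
The plan is to reduce the whole statement, via Proposition~\ref{prop.isom-vs-weak-equiv} and the Mackey-action discussion preceding the corollary, to classifying $G$-valued $1$-cocycles on the unique ergodic hyperfinite type~II$_1$ equivalence relation $\cR$ up to weak equivalence. Recall that every discrete measured groupoid with almost every isotropy group isomorphic to $H$ and with amenable associated relation is of the form $\cG_\delta$; that Proposition~\ref{prop.isom-vs-weak-equiv} identifies $\cG_\delta\cong\cG_{\delta'}$ with weak equivalence of $p\circ\delta$ and $p\circ\delta'$; and that by \cite[Theorem~3.1]{GS91} two \emph{recurrent} $G$-valued $1$-cocycles on the amenable pmp relation $\cR$ are weakly equivalent precisely when their Mackey actions are conjugate. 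So in both cases I would first verify recurrence of the cocycles in play, and then determine which ergodic nonsingular $G$-actions occur as Mackey actions.

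For part~(1): since $G=\Aut(H)/\overline{\Inn(H)}$ is a quotient of a closed subgroup of $\mathrm{Sym}(H)$, it is totally disconnected, so a compact such $G$ is profinite. Every $1$-cocycle $\delta:\cR\recht G$ is then recurrent, since its skew product $\cR_\delta$ on $X\times G$ (with $(X,\mu)$ the base of $\cR$) preserves the \emph{finite} measure $\mu\times\mathrm{Haar}_G$ and has infinite orbits, hence is of type~II$_1$ and in particular not of type~I. An ergodic nonsingular action of a compact group is essentially transitive, so the Mackey action of $\delta$ is $G\actson G/K$ for a closed subgroup $K\leq G$, unique up to conjugacy; conversely each such $G/K$ is realised as a Mackey action of a recurrent cocycle on $\cR$ (a standard realisation fact for amenable $G$-actions, which can also be read off from the profinite structure of $G$). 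Combined with the first paragraph this makes $\cG\mapsto[K]$ a Borel bijection from isomorphism classes of these groupoids onto conjugacy classes of closed subgroups of $G$; as $G$ is compact and acts continuously on the compact metrizable space of closed subgroups with closed orbits, that quotient is standard Borel, which is exactly the asserted concrete classification.

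For part~(2): fix an embedding $\iota:\Z\hookrightarrow G$. Given a weakly mixing pmp transformation $T$ of $([0,1],m)$, I would take $\cR=\cR(T)$ (the unique ergodic hyperfinite type~II$_1$ relation by Dye's theorem) together with the canonical cocycle $\delta_T:\cR\recht\Z$, $\delta_T(T^kx,x)=k$. A direct computation shows that $(x,n)\mapsto T^{-n}(x)$ identifies $([0,1]\times\Z)/\cR_{\delta_T}$ with $([0,1],[m])$ and intertwines the translation $\rho$ with $T$, so that the Mackey action of $\delta_T$ is $T$ itself; in particular $\delta_T$, and hence $\iota\circ\delta_T:\cR\recht G$, is recurrent because $T$ is properly ergodic, with Mackey action the induced action $\mathrm{Ind}_{\Z}^{G}(T)$. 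Let $\cG_T$ be $\cG_\delta$ for a lift to $\Aut(H)$ of $\delta=\iota\circ\delta_T$; by construction $\cG_T$ lies in the class under consideration. By Proposition~\ref{prop.isom-vs-weak-equiv} and \cite[Theorem~3.1]{GS91}, $\cG_T\cong\cG_S$ iff $\mathrm{Ind}_{\Z}^{G}(T)$ and $\mathrm{Ind}_{\Z}^{G}(S)$ are conjugate $G$-actions, and the usual Mackey normality/imprimitivity analysis (using $\Aut(\Z)=\{\pm1\}$) shows that this holds iff $T$ and $S$ are conjugate, or flip-conjugate when $\iota(\Z)$ has a normalising element of $G$ that inverts it (which is the case, e.g., for the standard unipotent embedding $\Z\hookrightarrow GL_2(\Z)$ relevant to the $H=\Z^2$ example). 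In either case $T\mapsto\cG_T$ is a Borel reduction of (flip) conjugacy of weakly mixing transformations of $[0,1]$ to isomorphism of these groupoids, and since the former relation is turbulent and hence not classifiable by countable structures (Hjorth; see \cite{H00}), neither are the groupoids.

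The step I expect to be the main obstacle is the Mackey-action bookkeeping on both sides: in part~(1), verifying that every $G/K$ is genuinely realised and that the invariant map is Borel; in part~(2), pinning down exactly which automorphisms of $\iota(\Z)$ are implemented by conjugation inside $G$, so that $\mathrm{Ind}_{\Z}^{G}(T)\cong\mathrm{Ind}_{\Z}^{G}(S)$ corresponds to precisely conjugacy, resp.\ flip-conjugacy, with no spurious identifications. The weak-mixing hypothesis --- which forces $\mathrm{Ind}_{\Z}^{G}(T)$ to be properly ergodic so the Mackey machinery applies cleanly --- together with the cocycle input \cite[Theorem~3.1]{GS91} carries the rest.
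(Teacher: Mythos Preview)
Your overall strategy matches the paper's: reduce via Proposition~\ref{prop.isom-vs-weak-equiv} to weak equivalence of $G$-valued cocycles, then pass to Mackey actions. Part~1 is essentially identical to the paper's argument (the paper gives the explicit realisation of $G/K$ via the shift $\Z\actson K^\Z/K$, where you appeal to a ``standard realisation fact'').

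In part~2, however, your recurrence claim is false. The cocycle $\iota\circ\delta_T:\cR(T)\to G$ is \emph{transient}: already $\cR_{\delta_T}$ on $X\times\Z$ is of type~I, since $(x,n)\sim_{\cR_{\delta_T}}(T^{-n}x,0)$ shows that $X\times\{0\}$ is a Borel fundamental domain; the same holds for $\cR_{\iota\circ\delta_T}$ on $X\times G$, with fundamental domain $X\times R$ for any set $R$ of coset representatives of $\iota(\Z)\backslash G$. So \cite[Theorem~3.1]{GS91}, which you invoke for the biconditional ``$\cG_T\cong\cG_S$ iff $\mathrm{Ind}_\Z^G(T)\cong\mathrm{Ind}_\Z^G(S)$'', does not apply. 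Proper ergodicity of $T$ does not make the cocycle recurrent; it only makes the Mackey action properly ergodic.

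This gap is not fatal, because you never actually need the direction that \cite{GS91} would supply. The implication $\cG_T\cong\cG_S\Rightarrow$ (Mackey actions conjugate) uses only Proposition~\ref{prop.isom-vs-weak-equiv} and the trivial fact that weakly equivalent cocycles have conjugate Mackey actions; the implication ($T,S$ (flip) conjugate) $\Rightarrow\cG_T\cong\cG_S$ is immediate from the construction, without going through Mackey actions at all. This is precisely how the paper proceeds: it never cites \cite{GS91} in part~2. What remains is your ``imprimitivity analysis'' showing that a conjugacy of induced actions forces (flip) conjugacy of $T$ and $S$; here the paper uses weak mixing specifically to ensure that any $G$-conjugacy sends the fiber $\{e\}\times X$ to a single fiber $\{g\}\times X$ with $g^{-1}a^\Z g=a^\Z$, which is sharper than the ``proper ergodicity'' role you assign it. Finally, the anti-classification input is the Foreman--Weiss theorem \cite{FW03} (see also \cite[Theorem~5.7.b]{K10}); the reference \cite{H00} predates this result.
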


Note that we are in the first situation when $H = \Z$ or when $H$ is a finite group, while we are in the second situation when $H = \Z^n$ with $n \geq 2$.

\begin{proof}
1.\ By Proposition \ref{prop.isom-vs-weak-equiv}, the classification of the groupoids $\cG$ is equivalent with the classification of the $1$-cocycles $\delta : \cR \recht G$ up to weak equivalence. The Mackey action of such a $1$-cocycle is an ergodic action of the compact group $G$ and thus of the form $G \actson G/K$ for some closed subgroup $K < G$. Two such actions are conjugate if and only if the corresponding closed subgroups are conjugate. Using the canonical $K$-valued $1$-cocycle for the action $\Z \actson K^\Z / K$, where $K$ is sitting diagonally in $K^\Z$ and $\Z$ is acting as a shift, every action $G \actson G/K$ arises as the Mackey action of a $1$-cocycle. Finally, for every $1$-cocycle $\delta : \cR \recht G$, the equivalence relation $\cR_\delta$ preserves a probability measure, so that every $1$-cocycle $\delta : \cR \recht G$ is recurrent. By the discussion above, using \cite[Theorem 3.1]{GS91}, the result follows.

2.\ Denote by $p : \Aut(H) \recht G$ the quotient homomorphism. Choose $\al \in \Aut(H)$ such that $a := p(\al)$ is an element of infinite order in $G$. Fix a standard nonatomic probability space $(X,\mu)$. For every weakly mixing pmp transformation $T \in \Aut(X,\mu)$, consider the orbit equivalence relation $\cR_T \cong \cR$ of the associated $\Z$-action and consider the $1$-cocycle $\delta_T : \cR_T \recht \Aut(H)$ given by $\delta_T(T^n x,x) = \al^n$ for all $x \in X$, $n \in \Z$. Denote by $\cG_T$ the associated groupoid.

We distinguish two cases.
\begin{enumlist}[label=\alph*.]
\item There exists an element $g \in G$ such that $g a g^{-1} = a^{-1}$.
\item There is no element $g \in G$ such that $g a g^{-1} = a^{-1}$.
\end{enumlist}
We prove that in case a, the groupoids $\cG_T$ and $\cG_S$, for weakly mixing pmp transformations $T,S \in \Aut(X,\mu)$, are isomorphic if and only if there exists a $\Delta \in \Aut(X,\mu)$ such that $\Delta T \Delta^{-1} = S^{\pm 1}$. In case b, we prove that $\cG_T \cong \cG_S$ if and only if there exists a $\Delta \in \Aut(X,\mu)$ such that $\Delta T \Delta^{-1} = S$.

When $T$ and $S$ are conjugate, it is immediate that $\cG_T \cong \cG_S$. When $\Delta T \Delta^{-1} = S^{-1}$ and $g a g^{-1} = a^{-1}$, it follows that $\Delta$ is an isomorphism of $\cR_T$ onto $\cR_S$ and that
$$g \, (p \circ \delta_S \circ \Delta)(x,y) \, g^{-1} = (p \circ \delta_T)(x,y)$$
for all $(x,y) \in \cR_T$. So, the $1$-cocycles $p \circ \delta_S$ and $p \circ \delta_T$ are weakly equivalent. It follows from Proposition \ref{prop.isom-vs-weak-equiv} that $\cG_T \cong \cG_S$.

Conversely, assume that $\cG_T \cong \cG_S$. By Proposition \ref{prop.isom-vs-weak-equiv}, the $1$-cocycles $p \circ \delta_S$ and $p \circ \delta_T$ are weakly equivalent. So, their Mackey actions are conjugate. These Mackey actions are precisely the induced actions $G \actson G \times_S X$, resp.\ $G \actson G \times_T X$, of $\Z \actson (X,\mu)$ given by $S$, resp.\ $T$, where $\Z$ is identified with the subgroup of $G$ generated by $a$. Denote by $\Delta : G \times_S X \recht G \times_T X$ the conjugacy of the Mackey actions. Since $S$ and $T$ are weakly mixing transformations, we must have $\Delta(\{e\} \times X) = \{g\} \times X$ with $g^{-1} a^\Z g = a^\Z$. If $g^{-1} a g = a$, it follows that $S$ and $T$ are conjugate. If $g^{-1} a g = a^{-1}$, it follows that $S$ and $T^{-1}$ are conjugate.

By \cite{FW03} (see also \cite[Theorem 5.7.b]{K10}), the classification of weakly mixing pmp transformations up to (flip) conjugacy is not classifiable by countable structures. So the groupoids $\cG$ are not classifiable by countable structures.
\end{proof}

\subsection{Exotic examples}

Let $\cR$ be the unique hyperfinite equivalence relation of type II$_1$. In this section, we construct for every ergodic subequivalence relation $\cR_0 \subset \cR$ an ergodic pmp amenable discrete groupoid $\cG(\cR_0)$ and prove that
$\cG(\cR_0) \cong \cG(\cR_1)$ if and only if there exists $\vphi \in [\cR]$ such that $(\vphi \times \vphi)(\cR_0) = \cR_1$. So, the classification of this family $\cG(\cR_0)$ of amenable discrete measured groupoids is equivalent with the classification, up to conjugacy by an element of the full group, of all ergodic subequivalence relations of the hyperfinite type II$_1$ ergodic equivalence relation $\cR$.

Put $X = \{0,1\}^\cP$ and denote by $\mu$ the product of the uniform probability measure on $\{0,1\}$. View $X$ as the space of subsets of the set $\cP$ of prime numbers. For every $\cQ \in X$, define the subgroup $\Gamma_\cQ \subset \Q$ given by
$$\Gamma_\cQ = \langle p^{-1} \mid p \in \cQ \rangle \; .$$
By construction, $\Gamma_\cQ \cong \Gamma_{\cQ'}$ if and only if $|\cQ \vartriangle \cQ'| < \infty$. Realize $\cR$ as the countable pmp equivalence relation on $(X,\mu)$ consisting of all $(\cQ,\cQ')$ with $|\cQ \vartriangle \cQ'| < \infty$. We construct for every $(\cQ,\cQ') \in \cR$ a canonical isomorphism $\delta_{(\cQ,\cQ')} : \Gamma_{\cQ'} \recht \Gamma_{\cQ}$. Define the element $q(\cQ,\cQ') \in \Q^*$ given by
$$q(\cQ,\cQ') = \Bigl( \prod_{p \in \cQ \setminus \cQ'} p^{-1} \Bigr) \cdot \Bigl( \prod_{p \in \cQ' \setminus \cQ} p \Bigr) \; .$$
Then define $\delta_{(\cQ,\cQ')}$ given by multiplication with $q(\cQ,\cQ')$. One checks that
$$\delta_{(\cQ,\cQ\dpr)} = \delta_{(\cQ,\cQ')} \circ \delta_{(\cQ',\cQ\dpr)} \quad\text{for all $(\cQ,\cQ'), (\cQ',\cQ\dpr) \in \cR$.}$$
So we can define the discrete measured groupoid $\cG$ as the semidirect product of the field $(\Gamma_\cQ)_{\cQ \in X}$ and the equivalence relation $\cR$, with the invariant probability measure $\mu$.

For every ergodic subequivalence relation $\cR_0 \subset \cR$, we denote by $\cG(\cR_0)$ the subgroupoid of $\cG$ given by only taking the semidirect product with $\cR_0$.
Since $\Gamma_\cQ \cong \Gamma_{\cQ'}$ if and only if $(\cQ,\cQ') \in \cR$, the family of groupoids $\cG(\cR_0)$ has the properties mentioned above.

\end{document}